\newtheorem{teo}{Theorem}[section]
\newtheorem{lem}[teo]{Lemma}
\newtheorem{prop}[teo]{Proposition}
\newtheorem{cor}[teo]{Corollary}
\newtheorem{dfn}[teo]{Definition}
\newtheorem{ques}[teo]{Question}
\newtheorem{conj}[teo]{Conjecture}
\declaretheoremstyle[
  spaceabove=\topsep, spacebelow=\topsep,
  headfont=\bf,  %\normalfont\scshape
  notefont=\mdseries, notebraces={(}{)},
  bodyfont=\rmfamily, %\normalfont,
  postheadspace=1em,
  qed=$\Diamond$
]{drem}
\declaretheorem[style=drem, name=Remark, numberlike=teo]{rmk}
\declaretheorem[style=drem, name=Example, numberlike=teo]{exa}
\newcommand{\eg}[0]{\emph{e.g.} }
\newcommand{\ie}[0]{\emph{i.e.} }
\newcommand{\srl}[1]{\overline{#1}}
\DeclareFontFamily{T1}{mafra}{}
\DeclareFontShape{T1}{mafra}{m}{n}{<->s*[0.95]yswab}{} % 1.1 instead of 0.8
\DeclareFontShape{T1}{mafra}{m}{it}{<->s*[1.0]ygoth}{} % 1.1 instead of 0.8
\DeclareTextFontCommand{\textgoth}{\yfrak}
\DeclareSymbolFont{mafrak}{T1}{mafra}{m}{n}
\DeclareSymbolFontAlphabet{\mathfr}{mafrak}
\DeclareSymbolFont{mbbold}{U}{bbold}{m}{n}
\DeclareSymbolFontAlphabet{\mathbbold}{mbbold}
\newcommand{\mr}[1]{\mathrm{#1}}
\newcommand{\mb}[1]{\mathbbold{#1}}
\newcommand{\mc}[1]{\mathcal{#1}}
\newcommand{\ms}[1]{\mathsf{#1}}
\newcommand{\pgen}[1]{\langle #1 \rangle}
\newcommand{\eps}[0]{\varepsilon}
\newcommand{\rr}[0]{\ensuremath{\mathbb{R}}}
\newcommand{\zz}[0]{\ensuremath{\mathbb{Z}}}
\newcommand{\nn}[0]{\ensuremath{\mathbb{N}}}
\newcommand{\qq}[0]{\ensuremath{\mathbb{Q}}}
\newcommand{\cc}[0]{\ensuremath{\mathbb{C}}}
\newcommand{\img}[0]{\mathrm{Im}\,}
\newcommand{\Id}[0]{\mathrm{Id}}
\newcommand{\un}[0]{\mathbbold{1}}
\newcommand{\maxx}[1]{\textrm{\raisebox{.5ex}{\mbox{$\underset{#1}{\max}$}}} \:}
\newcommand{\supp}[1]{\textrm{\raisebox{.5ex}{\mbox{$\underset{#1}{\sup}$}}} \:}
\newcommand{\vide}[0]{\varnothing}
\newcommand{\comp}[0]{\mathsf{c}}
\newcommand{\eqtag}[0]{\addtocounter{teo}{1} \tag{\theteo}}
\newcommand{\cayl}[0]{\mathrm{Cay}_{\text{l}}}
\newcommand{\cayr}[0]{\mathrm{Cay}_{\text{r}}}
\begin{document}

\newcommand{\ud}{\tfrac{1}{2}}
\newcommand{\ut}{\tfrac{1}{3}}
\newcommand{\uq}{\tfrac{1}{4}}

\newcommand{\lmd}{\lambda}
\newcommand{\Lmd}{\Lambda}
\newcommand{\Gm}{\Gamma}
\newcommand{\gm}{\gamma}
\newcommand{\GM}{\Gamma}

\newcommand{\bsl}\backslash
\newcommand{\acts}{\curvearrowright}
\newcommand{\donc}{\rightsquigarrow}

\newcommand{\smdd}[4]{\big( \begin{smallmatrix}#1 & #2 \\ #3 & #4\end{smallmatrix} \big)}

\newcommand{\sgn}{\textrm{sgn}\,}
\newcommand{\cpc}{\mathrm{icp}}

\begin{center}
\Large Mixing, malnormal subgroups and cohomology in degree one.
\vspace*{1cm}

\centerline{\large Antoine Gournay\footnote{Supported by the ERC-StG 277728 ``GeomAnGroup''.}} 
\end{center}

\vspace*{1cm}

\centerline{\textsc{Abstract}}

\begin{center}
\parbox{10cm}{{ \small 
The aim of the current paper is to explore the implications on the group $G$ of the non-vanishing of the cohomology in degree one of one of its representation $\pi$, given some mixing conditions on $\pi$. 
For example, harmonic cocycles of weakly mixing unitary representations factorise by the FC-centre.
In that case non-vanishing implies the FC-centre is trivial or fixes a vector.
Next, for any subgroup $H<G$, $H$ will either be ``small'', almost-malnormal or $\pi_{|H}$ also has non-trivial cohomology in degree one (in this statement, ``small'', reduced \emph{vs} unreduced cohomology and unitary \emph{vs} generic depend on the mixing condition).
The notion of q-normal subgroups is an important ingredient of the proof and results on the vanishing of the reduced $\ell^p$-cohomology in degree one are obtained as an intermediate step.
\hspace*{.1ex} 
}}
\end{center}

%---------------------
\section{Introduction} %\label{s-intro}
%---------------------

\newcommand{\hdc}[0]{t\!\mc{H}\!\ms{D}^c}
\newcommand{\hdp}[0]{t\!\mc{H}\!\ms{D}^p}
\newcommand{\D}[0]{\ms{D}}
\renewcommand{\L}[0]{\ms{L}}
\newcommand{\Hi}[0]{\ms{H}}
\newcommand{\Uni}[0]{\ms{U}}
\newcommand{\XX}[0]{\mc{X}}
\newcommand{\BB}[0]{\ms{B}}
\newcommand{\Stab}[0]{\mathrm{Stab}}
\newcommand{\WAP}[0]{\mr{WAP}}
\newcommand{\LI}[0]{\mr{Is}_{\text{lin}}}

Representation theory, in particular through cohomology, plays nowadays a central r\^ole in geometric theory as attested by works such as \cite{BHV}, \cite{Oza} and \cite{Shal}. 
The subject matter of this particular work is to try to deduce from the non-vanishing of [reduced or unreduced] cohomology, some algebraic properties which have a taste of hyperbolicity, \eg small centre and large Abelian subgroups are malnormal.

Let us briefly recall that for a linear representation $\pi: G \to \mr{GL}(\ms{V})$ of $G$, a cocycle is a map $b:G \to \ms{V}$ such that the cocycle relation is satisfied: $b(gh) = b(g) + \pi(g)b(h)$. 
The obvious cocycles (those of the form $b(g) = \pi(g) \xi - \xi$ for some $\xi \in \ms{V}$) are called coboundaries, and cohomology is obtained as the usual quotient. 
When $\ms{V}$ and $G$ come with a topology, one can speak of reduced cohomology by looking at the closure of the space of coboundaries for the topology of uniform convergence on compacts.

For unitary representation, Guichardet \cite{Gu} was the first to notice that the reduced cohomology can be identified to a natural subspace of cocycles, the harmonic cocycles. 
If $G$ is generated by a finite set $S$ and $\mu$ is some probability measure on $S$ such that $\mu(s^{-1}) = \mu(s)$, a $\mu$-harmonic cocycle is a cocycle such that $\sum_{s \in S} \mu(s) b(s) =0$.
See Bekka \cite{Bek}, \cite{GJ}, Erschler \& Ozawa \cite{EO} and Ozawa \cite{Oza} for recent works where these cocycles play an important part; see Bekka \& Valette \cite{BV} for prior results which show their importance.

Recall that the kernel of a cocycle $\ker b = b^{-1}(0)$ is a subgroup of $G$ and that the FC-centre of $G$, noted $Z^{FC}(G)$, is the characteristic subgroup of elements with a finite conjugacy class ($Z^{FC}(G)$ contains the centre, $Z(G)$).
The first result (essentially a concatenation of Theorem \ref{tergliou-t} and Theorem \ref{twmconj-t}) shows that non-trivial reduced cohomology leads to the FC-centre fixing a vector or being trivial.
\begin{teo}\label{tcen-t}
Let $G$ be a finitely generated group and $\pi$ a unitary representation.  \\
% \begin{enumerate}\renewcommand{\labelenumi}{(\arabic{enumi})}\renewcommand{\itemsep}{-1ex}
%  \item  
\hspace*{1ex}{\bf 1.} If $\pi$ is weakly mixing, then, for any harmonic cocycle $b$, $Z^{FC}(G) \subset \ker b \cap \ker \pi_{| \img b}$. \\
%  \item  
\hspace*{1ex}{\bf 2.} If $\pi$ is ergodic and $G$ is $\mu$-Liouville (for some symmetric measure $\mu$ of finite support), then, for any harmonic cocycle $b$, $Z(G) \subset \ker b \cap \ker \pi_{|\img b}$.\\
% \end{enumerate}
\emph{In particular,} \\
% \begin{enumerate}\renewcommand{\labelenumi}{(\arabic{enumi})}\renewcommand{\itemsep}{-1ex}
%  \item 
 \hspace*{1ex}{\bf 3.} if $\pi$ is weakly mixing and there is a non-trivial subgroup $H$ of $Z^{FC}(G)$ such that $\pi_{| H}$ is ergodic, then $\srl{H}^1(G,\pi)=0$.\\
%  \item 
 \hspace*{1ex}{\bf 4.}
 if $\pi$ is ergodic, $G$ is $\mu$-Liouville and there is a non-trivial subgroup $H$ of $Z(G)$ such that $\pi_{| H}$ is ergodic, then $\srl{H}^1(G,\pi)=0$.
% \end{enumerate}
\end{teo}
The proof of this Theorem may be found at the end of \S{}\ref{sscentraliser}.
Corollary \ref{tmimicentre-c} gives a very similar result.
These results can be used to give a proof that nilpotent groups have property $H_T$ and virtually nilpotent groups have property $H_{FD}$ (in the sense of Shalom \cite{Shal}). See Corollary \ref{tguich-c} and Corollary \ref{twmvnil-c} (these statements go back to Guichardet \cite[Th\'eor\`eme 7 in \S{}8]{Gu}). It can be also used to show that in an amenable group which is torsion either no harmonic cocycle is proper or the FC-centre is finite (see Corollary \ref{tcoproptor-c}). 

Theorem \ref{tcen-t} and Corollary \ref{tmimicentre-c} are improvements of \cite[Proposition 1.5 and Lemma 2.7]{GJ}.
See also Bekka, Pillon \& Valette \cite[\S{}4.6 and Corollary 4.13]{BPV} for similar results.  
Recently, Brieussel \& Zheng \cite[\S{}3]{BZ} gave an application of Theorem \ref{tcen-t} to a question of Shalom.

An important weakening of the normality relation (q-normal and wq-normal subgroups, see \S{}\ref{ssqnorm}) was introduced by Popa \cite{Po} while studying cohomology.
Peterson \& Thom \cite[Remark 5.3]{PT} showed that these notions are closely related to the presence of almost-malnormal (even malnormal if the group is torsion-free) subgroups.
They further used this to show that many groups have trivial reduced $\ell^2$-cohomology.
The second main result is to show that if $G$ has non-vanishing of cohomology for a representation $\pi$, then the subgroups of $G$ satisfy a trichotomy: they are either ``small'', almost-malnormal or the restricted representation has non-vanishing cohomology.

The precise formulation is a concatenation of Theorem \ref{tteoptweak-t}, Corollary \ref{tcoefflpharm-c},  Theorem \ref{tqnorlpcoh-t} and Theorem \ref{tqnorcen-t}. 
A representation is said to have finite stabilisers, if the stabiliser of any $\xi \in \ms{V} \setminus \{0\}$ is finite (this condition is weaker than mildly mixing, stronger than ergodic and not comparable to weakly mixing).
The FC-centraliser $Z^{FC}_G(H)$ of a subgroup $H<G$ is defined in \S{}\ref{sscentraliser} (it contains the centraliser).
\begin{teo}\label{tmaln-t}
Let $G$ be a finitely generated group.
\begin{enumerate}\renewcommand{\itemsep}{-1ex} \renewcommand{\labelenumi}{\bf \arabic{enumi}.}
\item If 
$\pi$ is a linear representation with finite stabilisers and
there is a subgroup $H<G$ with $H^1(H,\pi_{|H}) =0$, \\
then either $H$ is contained in an almost-malnormal strict subgroup of $G$, $H$ is finite, or $H^1(G,\pi)=0$.

\item If $\pi$ is a unitary representation with finitarily coefficients in $\ell^q$ and
there is a finitely generated subgroup $H < G$ with $\srl{H}^1(H,\lmd_{\ell^pH}) =0$ (where $p>q$) or $Z^{FC}_G(H)$ infinite, \\
then either $H$ is contained in an almost-malnormal strict subgroup of $G$, $H$ has growth bounded by a polynomial of degree $d<p$, or $\srl{H}^1(G,\pi)=0$.
\end{enumerate}
\end{teo}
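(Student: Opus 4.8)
The plan is to run both statements through a single device: Popa's q-normal and wq-normal subgroups \cite{Po}, together with the observation of Peterson \& Thom \cite[Remark 5.3]{PT} that a subgroup which is not wq-normal in $G$ lies inside a proper almost-malnormal subgroup. Concretely, I would first dispose of the first alternative of each trichotomy: assuming $H$ is contained in no proper almost-malnormal subgroup, run the transfinite ``q-normaliser'' chain $H=K_0\le K_1\le\cdots$ issued from $H$ (at a successor adjoin all $g$ with $|gK_\alpha g^{-1}\cap K_\alpha|=\infty$; take unions at limits); were its final term $M$ not $G$, then by construction no $g\notin M$ would satisfy $|gMg^{-1}\cap M|=\infty$, i.e.\ $M$ would be almost-malnormal and proper, contradicting the assumption. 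So we may assume $H$ is wq-normal in $G$ (with $G$ finitely generated throughout), and the whole problem reduces to transporting the relevant vanishing along a single q-normal step $K<L$ and through limit stages.

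For Part 1 the transport rests on the identity: if $b\colon L\to\ms V$ is a $\pi$-cocycle whose restriction to a subgroup $K$ is the coboundary $k\mapsto\pi(k)\xi-\xi$, then $b$ restricted to $gKg^{-1}$ is the coboundary with vector $\eta_g=\pi(g)\xi-b(g)$, for every $g\in L$ (a one-line consequence of the cocycle relation and $b(g^{-1})=-\pi(g^{-1})b(g)$). Given $K$ q-normal in $L$ with $H^1(K,\pi_{|K})=0$ and a cocycle $b$ on $L$, write $b_{|K}$ as the coboundary with vector $\xi$; for each $g$ in the generating set of $L$ provided by q-normality, the infinite group $N_g:=gKg^{-1}\cap K$ lies in $K$ and in $gKg^{-1}$, so comparing the two descriptions of $b$ there shows $\xi-\eta_g$ is fixed by $\pi(N_g)$; since $\pi$ has finite stabilisers and $N_g$ is infinite, $\xi=\eta_g$, i.e.\ $b(g)=\pi(g)\xi-\xi$. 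The set of $g\in L$ with $b(g)=\pi(g)\xi-\xi$ is a subgroup containing this generating set, hence all of $L$, so $H^1(L,\pi_{|L})=0$. A limit stage $K_\lambda=\bigcup_{\alpha<\lambda}K_\alpha$ is handled the same way: the vectors $\xi_\alpha$ obtained on each $K_\alpha$ all agree, being compared on the infinite groups $K_\beta$ with $\beta<\alpha$, so $b_{|K_\lambda}$ is a coboundary. Both steps need every $K_\alpha$ infinite, which holds once $H$ is infinite; the ``$H$ finite'' alternative is exactly the degenerate case where the hypothesis on $H$ carries no information. Iterating up the chain gives $H^1(G,\pi)=0$ (this is Theorem \ref{tteoptweak-t}).

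Part 2 needs two more inputs. First, I would funnel both hypotheses on $H$ into the single statement $\srl{H}^1(H,\pi_{|H})=0$: when $\pi$ is unitary with (finitarily) $\ell^q$-coefficients and $q<p$, reduced cohomology with coefficients in $\pi_{|H}$ is controlled by the reduced $\ell^p$-cohomology of $H$ --- a comparison of the $\ell^p$-regular representation with $\ell^q$-decaying unitary representations, which I would package as Corollary \ref{tcoefflpharm-c} --- so $\srl{H}^1(H,\lmd_{\ell^pH})=0$ does the job; and when instead $Z^{FC}_G(H)$ is infinite, a relative form of the harmonic-cocycle argument behind Theorem \ref{tcen-t} (using that $\ell^q$-coefficients make $\pi_{|H}$ weakly mixing) plays the same launching role, which is what Theorem \ref{tqnorcen-t} isolates. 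Second, the q-normal step must be redone in the reduced setting (Theorem \ref{tqnorlpcoh-t}): now $b_{|K}$ is only a \emph{limit} of coboundaries with vectors $\xi_n$, so $\xi_n-\eta_{g,n}$ is merely asymptotically $\pi(N_g)$-invariant, and to conclude that $b_{|L}$ is a limit of coboundaries one must rule out non-trivial vectors that are almost-invariant along the infinite group $N_g$; the $\ell^q$-decay of the coefficients kills such vectors provided $N_g$ --- hence $K$, hence, by a growth-monotonicity argument along the chain, $H$ itself --- is not too thin, the precise threshold being polynomial growth of degree $<p$. Transporting up the chain then yields $\srl{H}^1(G,\pi)=0$ unless $H$ has polynomial growth of degree $<p$.

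The main obstacle is the reduced/$\ell^p$ form of the q-normal step in Part 2: one has to turn ``asymptotically invariant under the infinite subgroup $N_g$'' into ``cocycle value is a limit of coboundaries'' quantitatively, and pin the threshold exactly at polynomial growth of degree $p$; this is where the intermediate vanishing results for reduced $\ell^p$-cohomology and the precise meaning of ``finitarily $\ell^q$-coefficients'' do the real work. A secondary difficulty is the coherence of the approximating vectors $\xi_n$ at limit ordinals, where, in contrast to Part 1, there are no genuine invariant vectors available to force compatibility, so one must argue directly with the reduced cohomology of the union and with the growth of the $K_\alpha$'s.
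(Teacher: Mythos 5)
Your Part 1 is correct and is in substance the paper's argument. The reduction of the first alternative to wq-normality via the transfinite q-normaliser chain is Lemma \ref{twqnor-amal}, and your successor-step computation --- comparing the coboundary vectors $\xi$ and $\eta_g=\pi(g)\xi-b(g)$ on the infinite group $gKg^{-1}\cap K$ and invoking finite stabilisers --- is exactly the computation behind Lemma \ref{tlempt-l}. The paper packages it without transfinite induction: subtract the coboundary once, and observe that the kernel of the resulting cocycle is finite, almost-malnormal, or all of $G$, hence contains the almost-malnormal hull of $H$ (Theorem \ref{tteoptweak-t}). The two are interchangeable here because finite stabilisers is inherited by every term of the chain.

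Part 2 contains a genuine gap. You propose to funnel both hypotheses into $\srl{H}^1(H,\pi_{|H})=0$ and then transport \emph{that} vanishing up the wq-normal chain inside the unitary representation $\pi$, dealing with the fact that $b_{|K}$ is only a limit of coboundaries by ``ruling out vectors almost-invariant along $N_g$'' via the $\ell^q$-decay of coefficients. That transport step is not available: converting ``$\xi_n-\eta_{g,n}$ is asymptotically $\pi(N_g)$-invariant'' into ``$b(g)$ is a limit of coboundary values'' needs a uniform spectral gap for $\pi(N_g)$, which neither finite stabilisers nor finitarily $\ell^q$ coefficients supplies; the incoherence of the approximating vectors at limit ordinals, which you call a secondary difficulty, is another face of the same obstruction. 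The paper explicitly records this q-normal step for reduced cohomology of a general (even mildly mixing) representation as an open problem (Question \ref{qredcohqnorm}). The actual proof reverses your order of operations: it climbs the chain entirely within reduced $\ell^p$-cohomology, where cocycles are virtual coboundaries $f\in\D^pG$, reduced-triviality of $f$ restricted to a $K$-coset is equivalent --- under the growth hypothesis, via $p$-hyperbolicity of $K$ (Corollary \ref{tbndval-c}) --- to being constant at infinity there, and q-normality forces these constants to match across cosets because $\nabla f\in\ell^pE$ (Theorem \ref{tqnorlpcoh-t}); only at the top does one convert to $\pi$, by applying Corollary \ref{tcoefflpharm-c} to $G$ itself. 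This is also where the threshold $d>p$ genuinely enters (Sobolev embedding/isoperimetry giving $p$-hyperbolicity), not through killing almost-invariant vectors.

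A second, independent defect: the hypothesis ``$Z^{FC}_G(H)$ infinite'' cannot be converted into $\srl{H}^1(H,\pi_{|H})=0$ or $\srl{H}^1(H,\lmd_{\ell^pH})=0$, because the FC-centraliser lives in $G$ and may meet $H$ in a finite set, in which case it says nothing about $H$'s own representations. Theorem \ref{tqnorcen-t} handles exactly this case by a separate argument: the infinitely many cosets $Kz_ig_{j}$ lie at uniformly bounded distance from one another, so if the restriction of $f$ to one of them were non-constant at infinity, infinitely many restrictions would have $\D^pK$-norm bounded below by that of the associated $p$-harmonic function, contradicting $\nabla f\in\ell^pE$.
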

Recall that, if $G$ is torsion-free, almost-malnormal subgroups are actually malnormal.

In Theorem \ref{tmaln-t}.1, the ``smallness'' of $H$ also has an equivocal formulation: since trivial unreduced cohomology implies that the space of coboundaries is closed, one might be tempted to say that the group is large for the representation under consideration.
% In Theorem \ref{tmaln-t}.2, ``smallness'' unequivocal: it is geometric (growth condition) and algebraic (finite centraliser).
Though Theorem \ref{tmaln-t}.1 only uses the work of Popa \cite[\S{}2]{Po} and Peterson \& Thom \cite[\S{}5]{PT}, Theorem \ref{tmaln-t}.2 also uses harmonic cocycles. 

The first (and shortest) step in the proof of Theorem \ref{tmaln-t}.2 (Corollary \ref{tcoefflpharm-c}) is to show that harmonic cocycles for such representations gives rise to a harmonic function with a gradient in $\ell^p$ which in turn implies that the reduced $\ell^p$-cohomology in degree one is non-trivial (by a result of \cite[Theorem 1.2 or Corollary 3.14]{Go}).
The second step is to extend the groups for which the $\ell^p$-cohomology vanishes (see Remark \ref{rreflpcoh} for references).
The trichotomy expressed in reduced $\ell^p$-cohomology is a combination of Theorem \ref{tqnorlpcoh-t} and Theorem \ref{tqnorcen-t}:
\begin{cor}\label{tcortrico-c}
Assume $\srl{H}^1(G,\lmd_{\ell^p G}) \neq 0$ and $K<G$ are finitely generated. Then at least one of the following holds:
\begin{itemize}\renewcommand{\itemsep}{-1ex}
 \item $\srl{H}^1(K,\lmd_{\ell^p K}) \neq 0$ and $Z^{FC}_G(K)$ is finite,
 \item $K$ has growth at most polynomial of degree $d \leq p$,
 \item $K$ is contained in a almost-malnormal strict subgroup of $G$.% (if $G$ is torsion-free, a malnormal strict subgroup).
\end{itemize}
\end{cor}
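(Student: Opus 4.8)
The plan is to read the statement as the combination of two propagation theorems, after the standard reduction that turns the geometric alternative into an algebraic one. First dispose of the third bullet: if $K$ is contained in a strict almost-malnormal subgroup of $G$ there is nothing to prove, so assume it is not. I would then invoke the Popa--Peterson--Thom dictionary recalled in \S{}\ref{ssqnorm} (cf.\ \cite{Po} and \cite[Remark 5.3]{PT}): the wq-normal closure $N$ of $K$ in $G$ --- obtained by transfinitely iterating the operation $L\mapsto\langle L,\{g\in G : |L\cap gLg^{-1}|=\infty\}\rangle$ until it stabilises, which it must since the chain is increasing inside $G$ --- is a fixed point of this operation, hence almost-malnormal (for $g\notin N$ one cannot have $|N\cap gNg^{-1}|=\infty$, else $g\in N$). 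If $N$ were a strict subgroup it would be a strict almost-malnormal subgroup containing $K$, against the assumption; therefore $N=G$, i.e.\ $K$ is wq-normal in $G$.

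Now $K$ is wq-normal and $\srl{H}^1(G,\lmd_{\ell^p G})\neq 0$, and it remains to produce the first bullet unless $K$ has growth at most polynomial of degree $d\le p$. This is exactly what Theorem \ref{tqnorlpcoh-t} and Theorem \ref{tqnorcen-t} provide. Theorem \ref{tqnorlpcoh-t} says a wq-normal $K$ with $\srl{H}^1(K,\lmd_{\ell^p K})=0$ and without such polynomial growth forces $\srl{H}^1(G,\lmd_{\ell^p G})=0$; as we assumed the latter non-zero, either $K$ has polynomial growth of degree $d\le p$ or $\srl{H}^1(K,\lmd_{\ell^p K})\neq 0$. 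Theorem \ref{tqnorcen-t} is the analogue with ``$Z^{FC}_G(K)$ infinite'' in place of ``$\srl{H}^1(K,\lmd_{\ell^p K})=0$'': for wq-normal $K$ outside the polynomial-growth range, an infinite $Z^{FC}_G(K)$ again forces $\srl{H}^1(G,\lmd_{\ell^p G})=0$, hence $Z^{FC}_G(K)$ is finite. Combining the two conclusions: if the last two bullets fail, then both $\srl{H}^1(K,\lmd_{\ell^p K})\neq 0$ and $Z^{FC}_G(K)$ is finite, which is the first bullet. This settles the corollary granted the two theorems, which I would prove first.

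The real work --- and where I expect the difficulty to lie --- is internal to Theorems \ref{tqnorlpcoh-t}--\ref{tqnorcen-t}, and proceeds in two stages. The q-normal step handles $G=\langle g : |K\cap gKg^{-1}|=\infty\rangle$: restrict a reduced $\ell^p$-cocycle of $\lmd_{\ell^p G}$ to each infinite subgroup $K\cap gKg^{-1}$, use that $\lmd_{\ell^p G}$ restricted to $K$ is an $\ell^p$-direct sum of copies of $\lmd_{\ell^p K}$ (over the cosets $K\backslash G$) and that degree-one reduced $\ell^p$-cohomology with coefficients in such a direct sum vanishes once it does for $\lmd_{\ell^p K}$ alone (cf.\ \cite{Go}), conclude the cocycle is reduced-trivial on each such intersection, and glue over the generating set to kill it on $G$; the centraliser variant feeds in instead a Guichardet/Shalom-type argument --- a cocycle is bounded along a subgroup centralising, up to finite index, the relevant representation --- in the spirit of Theorem \ref{tcen-t}. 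The wq-normal step then upgrades this by transfinite induction along $K=K_0\le K_1\le\cdots\le K_\alpha=G$, the point to check being continuity of reduced cohomology at limit ordinals. The genuinely delicate part is the polynomial-growth exception: virtually nilpotent groups of homogeneous dimension $d\le p$ are precisely where the reduction of a restricted cocycle to a coboundary can fail (the $\ell^p$-cohomology of such groups, or of the infinite intersections $K\cap gKg^{-1}$ they contain, need not vanish), so that case must be excised, and pinning the threshold down sharply --- and uniformly in both theorems --- is the main obstacle. (The companion Theorem \ref{tmaln-t}.2 states the weaker threshold $d<p$ because there one routes through a representation with coefficients merely in $\ell^q$, $q<p$, and loses the endpoint.)
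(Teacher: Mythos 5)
Your derivation of the corollary is correct and is precisely the paper's (implicit) argument: Lemma \ref{twqnor-amal} converts the failure of the third alternative into wq-normality of $K$, after which Theorems \ref{tqnorlpcoh-t} and \ref{tqnorcen-t}, applied contrapositively, force the first alternative once the growth alternative fails (the case of finite $K$ being absorbed by degree-$0$ growth). Your closing sketch of how those two theorems themselves would be proved diverges from the paper, which writes cocycles as virtual coboundaries and exploits the constant-at-infinity criterion of Corollary \ref{tbndval-c} coset by coset rather than a direct-sum decomposition of the restricted representation; but since the corollary is stated as a consequence of those theorems, this does not affect the proof under review.
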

Corollary \ref{tcorptlp-c} also gives a similar statement for unreduced cohomology which does not require finite generation (it follows from Theorem \ref{tmaln-t}.1 and a result of Martin \& Valette \cite[Proposition 2.6]{MV}).

Corollary \ref{tcortrico-c} may also be seen as an algebraic version of the following geometric result from \cite[\S{}4.2 and Corollary 4.1]{Go}. If $G$ is finitely generated, $\srl{H}^1(G,\lmd_{\ell^p G}) \neq 0$ and $\Gamma$ is a spanning subgraph of some Cayley graph of $G$ then at least one of the following holds:
\begin{itemize}\renewcommand{\itemsep}{-1ex}
 \item the reduced $\ell^p$-cohomology of $\Gamma$ is non-trivial,
 \item $\Gamma$ may not have $d$-dimensional isoperimetry for any $d > 2p$,
\end{itemize}
Theorem \ref{tqnorlpcoh-t} can be used to show the triviality of reduced $\ell^p$ cohomology in degree one for all $p \in ]1,\infty[$ (and, as a consequence of Corollary \ref{tcoefflpharm-c}, the triviality for any unitary representation with finitarily coefficients in $\ell^p$) for non-solvable Baumslag-Solitar groups (see Example \ref{exBS}), many solvable groups (see Example \ref{exFM} and Corollary \ref{tsolvlp-c}) or even hyperabelian groups (see Corollary \ref{tasclp-c}).

On the way, it is also shown that the $p$-Royden boundary of many groups consists in only one point (see Corollary \ref{tcorroyd-c}).

{\it Organisation of the paper.} 
\S{}\ref{smix} is mostly concerned with definitions (the various conditions of mixing, coefficients and WAP functions are introduced in \S{}\ref{ssmixmean}; the various notions of normality and their relations are discussed in \S{}\ref{ssqnorm}) and the proof of Theorem \ref{tmaln-t}.1 (\ie  Theorem \ref{tteoptweak-t}) as it follows from fairly generic considerations. 
\S{}\ref{sredcoh} splits as follows: the notion of reduced cohomology and harmonic cocycles are done in \S{}\ref{ssharmcoc} (and generalised to Banach space in \ref{ssbanachrep}), further mixing conditions (finitarily coefficients in $\ell^p$) as well as their implications on harmonic functions are discussed in \S{}\ref{ssgrad}, and the fact that cocycles are ``virtual coboundary'' is dealt with in \S{}\ref{ssvirtcob}. 
\S{}\ref{scen} contains the proof of Theorem \ref{tcen-t}; it relies on \S{}\ref{ssmixmean}, \S{}\ref{ssharmcoc} and (tangentially) \S{}\ref{ssgrad}.
\S{}\ref{slpcoh} contains the proof of Theorem \ref{tmaln-t} and Corollary \ref{tcortrico-c}; it relies on \S{}\ref{ssqnorm} and (most of) \S{}\ref{sredcoh}. 
\S{}\ref{sslprem} gives a very short introduction to $\ell^p$-cohomology and its application to unitary representation, 
\S{}\ref{ssbndval} is dedicated to proving the criterion of triviality in terms of being constant at infinity (and addresses some questions of $p$-parabolicity), 
\S{}\ref{sslpqnor} contains the proof of Corollary \ref{tcortrico-c} 
and \S{}\ref{sslpcor} investigates further examples and corollaries.
\S{}\ref{sques} contains questions and open problems raised by the current investigation.

{\it Acknowledgments:} The author would like to thank: 
A.~Thom, for indicating that q-normality is a very useful tool to prove triviality of cocycles and pointing to \cite[\S{}5]{PT};
M.~Bourdon, for many interesting comments and explaining to the author that relatively hyperbolic groups have a non-trivial $\ell^p$-cohomology for some $p\geq 1$;
A.~Carderi, for pointing out the existence of intermediate notions of mixing (mildly mixing) and for explaining how to extend the proof of Theorem \ref{tvirtcoc-t} to the case of a group acting by measure preserving transformations on $(X,\mu)$;
P.N.~Jolissaint, since part of the present work is a natural prolongation of a question we investigated in \cite{GJ};
D.~Osin, for pointing out that some acylindrically hyperbolic groups may have trivial $\ell^p$-cohomology for all $p \in [1,\infty[$;
M.~Schmidt, for explaining the note on uniform transience \cite{KLSW} thus enabling to make a much more elegant proof of the ``trivial in reduced $\ell^p$-cohomology if and only if one value at infinity'' from \cite[\S{}4.2 and Corollary 4.1]{Go} (see Corollary \ref{tbndval-c}); 
and 
A.~Valette, for various comments and references.

The author would also like to thank Y.~Cornulier as well as D.~Holt and P.~de la Harpe for pointing out to litterature on the FC-centre and malnormal subgroups (respectively) via the website MathOverflow.

\par This work is supported by the ERC-StG 277728 ``GeomAnGroup'' .

%--------------------------------------------------
%--------------------------------------------------
\section{Mixing, q-normal subgroups and untwisting}\label{smix}
%--------------------------------------------------
%--------------------------------------------------

%----------------------------
\subsection{Mixing and means}\label{ssmixmean}
%----------------------------

Standard references for this subsection are the books of Glasner \cite{Glasner} (in particular, Ch.1 \S{}9-10, Ch.3 \S{}1-2 and Ch.8 \S{}5) and Kechris \cite{Kechris} (in particular, Ch.I \S{}2, Ch.II \S{}10 and App. H).

Before introducing the mixing conditions that are relevant for the current purposes, let us briefly recall some things about WAP functions.
Recall that a group $G$ acts on any function $f:G \to \cc$ by translation: $\gm \cdot f(x) = f(\gm^{-1} x)$. 
Upon restriction to $\ell^p G$ (for any $p \in [1,\infty]$) this action is isometric. 
It also preserves $c_0G$ (the closure in $\ell^\infty$-norm of finitely supported functions).
The space of \textbf{WAP functions} a subspace of $\ell^\infty G$ defined by 
\[
\mr{WAP}(G) = \{f \in \ell^\infty G \mid \srl{G \cdot f}^{\text{weak}} \text{ is compact [in the weak topology]}\}
\]
where $\srl{G \cdot f}^{\text{weak}}$ is the weak closure of the $G$-orbit of $f$.

For $\mc{X}$ a closed subspace of $\ell^\infty G$, a \textbf{mean} on $\mc{X}$ is an element $m \in (\ell^\infty G)^*$ such that $m(\un_G) = 1$ (where $\un_G$ is the function taking constant value $1$) and $m(f) \geq 0$ whenever $f \geq 0$.
Using the Ryll-Nardzevsky theorem, one gets the surprising fact that there is a unique $G$-invariant mean on $\WAP(G)$ ($G$-invariance is with respect to the afore-mentioned action of $G$ on functions). 
In particular, this implies that, for an amenable group, all the invariant means on $\ell^\infty G$ coincide on $\WAP(G)$.

Given an isometric linear representation of $G$ on a Banach space $\BB$ (\ie a homomorphism $\pi:G \to \LI(\BB)$), 
the \textbf{coefficient} of $\eta \in \BB^*$ and $\xi \in \BB$ is the function $\kappa_{\eta,\xi}: G \to \cc$ defined by $\kappa_{\eta,\xi}(x) = \pgen{ \eta \mid \pi(x) \xi }$.
When $\BB$ is a Hilbert space, the distinction between $\BB$ and $\BB^*$ is usually dropped (\ie $\eta$ and $\xi$ are in $\BB$).
It turns out that coefficient of representation on reflexive Banach space are in $\WAP(G)$.

\begin{dfn}\label{defsynd}
A set $L \subset G$ is syndetic if there is a finite $S \subset G$ to that $SL = G$. 
$L$ is thick if for all finite $F \subset G$, $\cap_{f \in F} fL \neq \vide$.
$L$ is \textbf{thickly syndetic}, if for any finite $F \subset G$, $L' = \cap_{g \in F} gL$ is syndetic.
 
A function is a \textbf{flight function} iff for any $\eps>0$, $D_\eps := \{ x \in G \mid |f(x)|\leq \eps\}$ is thickly syndetic.
\end{dfn}

A subgroup is syndetic if and only if it has finite index. A subgroup cannot be thick (or thickly syndetic) unless it is the whole group.

In the upcoming definitions, $m$ will denote the unique invariant mean on $\WAP(G)$. The notation $\underset{g \to \infty}\liminf F(g)$ should be understood as $\underset{n \to \infty}\lim \inf \{ F(g) \mid g \notin B_n\}$ where $B_n$ is some sequence of increasing finite sets with $\cup B_n = G$. The space $c_0G$ is the closure of finitely supported functions in $\ell^\infty G$, \ie functions $F$ with $\underset{g \to \infty}\limsup |F(g)| = 0$.
\begin{dfn}\label{defmix}
Let $\pi: G \to \LI(\BB)$ be a linear isometric representation of $G$. $\pi$ is 
\begin{enumerate}\renewcommand{\itemsep}{-1ex} \renewcommand{\labelenumi}{\bf \arabic{enumi}.}
\item \textbf{ergodic} if 
its coefficients $\kappa_{\eta,\xi}$ have mean $0$, \ie $\forall \eta \in \BB^*$ and $ \forall \xi \in \BB$, one has $m(\kappa_{\eta,\xi}) =0$.
\item \textbf{weakly mixing} if
$\forall \eta \in \BB^*, \forall \xi \in \BB,$ the coefficient $\kappa_{\eta,\xi}$ is a ``flight function'', \ie $m(|\kappa_{\eta,\xi}|) =0$.
\item \textbf{mildly mixing} if
$\forall \xi \in \BB,$ $\underset{g \to \infty}\liminf \| \pi(g) \xi - \xi\| >0$.
\item \textbf{strongly mixing} if 
$\forall \eta \in \BB^*, \forall \xi \in \BB,$ the coefficient $\kappa_{\eta,\xi}$ belongs to $c_0G$.
\end{enumerate}
\end{dfn}
Note that ergodicity is equivalent to the fact that $\pi$ does not contain the trivial representation, \ie there are no invariant vectors.
%%% if some coeff is non-zero, look at a weak$^*$ sequence of prob meas. conv to $m$. Take the corresponding sequence of vectors. It will be invariant...
For unitary representations, weakly  mixing is equivalent to the fact that $\pi$ does not contain a non-zero finite-dimensional subrepresentation.
Mildly mixing implies that, upon restriction to any infinite subgroup, the action is ergodic.

These definitions are in a monotone order: strongly mixing implies mildly mixing implies weakly mixing implies ergodic. 
For unitary representations, it is known that these implications are not reversible as soon as the group has an element of infinite order.

On some rare occasions, other (possibly inequivalent) definition of mixing will be used (this will be stressed). 
These definitions comes from action of groups by measure preserving transformations.
\begin{dfn}\label{defmixact}
Let $(X,\mu)$ be a measure space. 
Assume $G \acts (X,\mu)$ by measure preserving transformations. The action is
\begin{enumerate}\renewcommand{\itemsep}{-1ex} \renewcommand{\labelenumi}{\bf \arabic{enumi}.}
\item is ergodic if for any $G$-invariant subset $A \subset X$, either $\mu(A)=0$ or $\mu(A^\comp) = 0$.
\item is mildly mixing if $\forall A \subset X$ with $\mu(A) \notin \{0,\mu(X)\}$ one has $\liminf_{ g \to \infty } \mu( g A \triangle A)  >0$.
\item is strongly mixing if $\forall A \subset X$, $\lim_{ g \to \infty } \mu( g A \cap B) = \mu(A) \mu(B)$.
\end{enumerate}
\end{dfn}
Given such an action, one can associate a isometric (resp. unitary) representation by looking at $\L^p(X,\mu)$ (resp. $\L^2(X,\mu)$) and letting $G$ act on $f \in \L^p(X,\mu)$ by $\gm f(x) = f(\gm^{-1}x)$. 
If $\mu(X) < \infty$, one normally considers restricts the action to the subspace $\L^p_0(X,\mu) = \{ f \in \L^p(X,\mu) \mid \int_X f \mr{d} \mu=0\}$ (to avoid the obvious invariant vector given by the constant function).
When $\mu(X) = \infty$, the convention in the present work is that $\L^p_0(X,\mu) = \L^p(X,\mu)$.
These representations will here be called $\L^p$\textbf{-representations}.
For $p=2$, this representation (called a Koopman representation) has the same mixing properties as the action.

As an example, let us mention what those properties become for a countable $X$ (with the counting measure).
\begin{dfn}
Let $X$ be countable. The action $G \acts X$ 
\begin{enumerate}\renewcommand{\itemsep}{-1ex} \renewcommand{\labelenumi}{\bf \arabic{enumi}.}
\item is ergodic if there are no fixed points.
\item is weakly mixing if all $G$-orbits are infinite.
\item is mildly mixing [equivalently, strongly mixing] if there are no infinite stabilisers.
\end{enumerate}
\end{dfn}

%-------------------------------
\subsection{Cocycles and q-normality}\label{ssqnorm}
%-------------------------------

A nice reference on cocycles of representations is Bekka, de la Harpe \& Valette \cite[Chapter 2]{BHV} contains a complete discussion.
\begin{dfn}
Let $\pi$ be a linear isometric representation of $G$ on a Banach space $\BB$. 
A \textbf{cocycle} with values in $\pi$ is a map $b : G \rightarrow \BB$ satisfying the [cocycle] relation
\[
\forall g,h\in G, \qquad 
b(gh)=\pi(g)b(h)+b(g)
 \]
A cocycle of the form $g \mapsto (\pi(g)-1)v$, for some $\xi \in \BB$, is called a \textbf{coboundary}. 

$Z^{1}(G,\pi)$ (resp. $B^{1}(G,\pi)$) denotes the space of cocycles (resp. coboundaries) with respect to $\pi$. 
The first cohomology space of $\pi$ is defined as the quotient space  
\[
H^{1}(G,\pi)=Z^{1}(G,\pi)/B(G,\pi).
\] 
\end{dfn}
For later use, let us introduce the \textbf{coboundary map} $d : \BB \to Z^{1}(G,\pi)$ defined by $(d\xi)(g)=(\pi(g)-1)\xi$, for any $g\in G$ and $\xi\in\BB$. 
The space of \textbf{coboundaries}, denoted $B^1(G,\pi)$, corresponds to the image of $\BB$ via $d$.

Note that, a map $b$ satisfies the cocycle relation, if and only if, $G$ acts by affine isometries on $\BB$ by $\alpha(g)v=\pi(g)v+b(g)$. 
Such an affine action $\alpha$ has a fixed point if and only if $b$ is a coboundary.
Furthermore, for real Banach space, the Mazur-Ulam theorem (see Nica \cite{Nica-MU} for a very nice proof) 
can be used to show that any isometric representation $\sigma$ of $G$ is given by a cocycle $b$ of a linear isometric representation $\pi$ by the rule $\sigma(g)v = \pi(g)v + b(g)$ (for any $v \in \BB$).

Here are some simple consequences of the cocycle relation. First, $b(e) =0$ (since $b(e) = b(e \cdot e) = b(e) + \pi(e) b(e) = 2b(e)$). 
From there one gets
\[
b(g^{-1}) = -\pi(g^{-1}) b(g),
\]
by applying the cocycle relation to $b(g^{-1}g)$. 
This can be used to get
\[\eqtag \label{eqcocconj}
b(ghg^{-1}) = (1-\pi(ghg^{-1}) ) b(g) + \pi(g) b(h)
\]
and then rewritten (with $k= ghg^{-1}$) as
\[\eqtag \label{eqcocpopa}
b(g) - \pi(k) b(g) = b(k) - \pi(g) b(g^{-1}kg).
\]
\begin{rmk}\label{ractnorm}
Recall that when $H \lhd G$, there is an action of $G$ on $Z^1(H,\pi_{|H})$ by $g \cdot b(h) = \pi(g) b(g^{-1}hg)$. 
Furthermore (using \eqref{eqcocpopa}), this actions leaves $B^1(H,\pi_{|H})$ invariant (so passes to $H^1(H,\pi_{|H})$) and $H$ acts trivially on $H^1(H,\pi_{|H})$. Lastly, if $\pi_{|H}$ is ergodic, then $H^1(G,\pi) \simeq H^1(H,\pi_{|H})^{G/H}$.
\end{rmk}
So if $H \lhd G$, $\pi_{|H}$ is ergodic and $H^1(H,\pi) =0$ then $H^1(G,\pi)=0$. 
In the subsequent subsections, other hypothesis which allow to deduce the triviality of the cohomology of $G$ from that of $H$ will be given. 
To this end, let us recall some variations of the notions of normality (see Peterson \& Thom \cite[\S{}5]{PT} and Popa \cite[\S{}2]{Po}). 
\begin{itemize}\renewcommand{\itemsep}{-1ex}
\item A subgroup $H < G$ is called \textbf{q-normal} if there is a generating set $A$ of $G$ such that, $\forall g \in A$, $g H g^{-1} \cap H$ is infinite.
\item A subgroup is \textbf{ascendant}\footnote{Also called ``w-normal'', but also sometimes called descendant.} [resp. \textbf{wq-normal}] if there exists an ordinal number $\alpha$, and an increasing chain of subgroups, such that $H_0 = H$, $H_\alpha = G$, and for any $\gamma \leq \alpha$, $\cup_{\beta < \gamma} H_\beta < H_\gamma$ is normal [resp. q-normal].
\item A subgroup $K< G$ is \textbf{malnormal} if $K \cap g K g^{-1} = \{e\}$ for any $g \in G \setminus K$.
\item A subgroup $K< G$ is \textbf{almost-malnormal} if $K \cap g K g^{-1}$ is finite for any $g \in G \setminus K$.
\end{itemize}
According to the previous definition, $G$ is a malnormal and an almost-malnormal subgroup of $G$.
Since $K \cap gKg^{-1}$ is also a subgroup of $G$, the notion of almost-malnormal and malnormal subgroups are equivalent in torsion-free groups.

Recall that the intersection of two almost-malnormal subgroups is almost-malnormal. 
The \textbf{almost-malnormal hull} of a subgroup $H < G$ is the intersection of all almost-malnormal subgroups\footnote{When $H$ is finite, it is its own almost-malnormal hull.} containing $H$.

Obviously, a q-normal subgroup is always infinite.
The generating set (of $G$) in the definition of q-normal subgroup may always be assumed symmetric. 
Indeed ``$K \cap g Kg^{-1}$ is infinite'' implies that $g^{-1} ( K \cap gKg^{-1})g = g^{-1}Kg \cap K$ is infinite.

The sequences $\{H_\beta\}$ coming in the definition of ascendant (resp. wq-normal) subgroups are called \textbf{ascending} normal (resp. q-normal) sequences.

\begin{dfn}
Let $K < G$ be an infinite subgroup. 
The \textbf{q-normaliser} of $K$ in $G$ is the largest subgroup in which $K$ is q-normal:
\[
N^q_G(K) := \pgen{ g \in G \mid gKg^{-1} \cap K \text{ is infinite} }
\]
\end{dfn}
The q-normalisers are in some respect better behaved than normalisers. 
Recall that hypernormalising groups (a group where any subgroup $H$ which is ascendant with respect to a finite normal sequence sees its sequence of iterated normalisers converge to the full group) are rather rare; for finite groups see Heineken \cite{Heineken}.
% Camina \cite[Example 2 in \S{}3]{Ca}. 

Let us define the transfinite sequence of iterated q-normalisers $\{ N^{q,\beta}_G(H) \}$ by $N^{q,0}_G(H) = H$ and $N^{q,\alpha}_G(H) = N^q_G\big( \cup_{\gamma < \alpha} N^{q,\gamma}_G(H) \big)$.
Say the sequence $\{N^{q,\alpha}_G(H)\}$ stabilises if there is a $\alpha$ so that $\forall \beta \geq \alpha$, $N^{q,\beta}_G(H) = \cup_{\gamma < \beta} N^{q,\gamma}_G(H)$).
\begin{lem}\label{twqnor-amal}
Let $H < G$ be an infinite subgroup
\begin{enumerate}\renewcommand{\itemsep}{-1ex} \renewcommand{\labelenumi}{\bf \arabic{enumi}.}
 \item If $K<G$ is another subgroup, then $H<K \implies N^q_G(H) < N^q_G(K)$.
 \item If $M<G$ is an almost-malnormal subgroup, then $H<M \implies N^q_G(H) < M$.
 \item The sequence $\{ N^{q,\beta}_G(K) \}$ stabilises at the subgroup $M$ which is the almost-malnormal hull of $H$.
 \item $H$ is wq-normal if and only if there is an ordinal $\alpha$ with $N^{q,\alpha}_G(K) = G$.
\end{enumerate}
\end{lem}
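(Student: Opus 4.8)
The plan is to prove the four assertions about iterated q-normalisers more or less in the order stated, using the elementary properties of q-normality recorded just above the lemma (symmetry of the generating set, the fact that a q-normal subgroup is infinite, and the behaviour of intersections of almost-malnormal subgroups).

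For part 1, I would argue directly from the definition of the q-normaliser: if $H < K$ and $g K g^{-1} \cap K$ is infinite, I want $g H g^{-1} \cap H$... but that is false in general, so instead I should observe that $N^q_G(H)$ is generated by elements $g$ with $gHg^{-1}\cap H$ infinite, and for such $g$ we have $gHg^{-1}\cap H < gKg^{-1}\cap K$ (since $H<K$), hence $gKg^{-1}\cap K$ is infinite, so $g\in N^q_G(K)$; as $N^q_G(H)$ is generated by such $g$, monotonicity follows. For part 2, suppose $H < M$ with $M$ almost-malnormal, and take $g\in G$ with $gHg^{-1}\cap H$ infinite. Then $gMg^{-1}\cap M$ contains $gHg^{-1}\cap H$, hence is infinite; by almost-malnormality of $M$ this forces $g\in M$ (otherwise $M\cap gMg^{-1}$ would be finite). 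Again $N^q_G(H)$ is generated by such $g$, so $N^q_G(H)<M$. Since $H$ is infinite, one should check $H$ itself lies in the subgroup generated, which is immediate because $e\in N^q_G(H)$ trivially and more importantly every $h\in H$ satisfies $hHh^{-1}\cap H = H$ infinite.

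Part 3 is the technical heart. First, the transfinite sequence $\{N^{q,\beta}_G(H)\}$ is increasing by construction and by part 1 (monotonicity ensures each stage contains the previous union), so by cardinality it must stabilise at some ordinal, say at a subgroup $M^\infty = \bigcup_\gamma N^{q,\gamma}_G(H)$ with $N^q_G(M^\infty) = M^\infty$, i.e. $M^\infty$ is its own q-normaliser. I claim a subgroup equal to its own q-normaliser is almost-malnormal: if $g\notin M^\infty$ but $gM^\infty g^{-1}\cap M^\infty$ were infinite, then $g\in N^q_G(M^\infty)=M^\infty$, contradiction; hence $M^\infty\cap gM^\infty g^{-1}$ is finite for all $g\notin M^\infty$, so $M^\infty$ is almost-malnormal and contains $H$. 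Conversely, by part 2, any almost-malnormal $M$ with $H<M$ satisfies $N^q_G(H)<M$, and then inductively (using part 1 at successor stages and taking unions at limits, noting that a union of subgroups contained in $M$ is contained in $M$) every $N^{q,\beta}_G(H)<M$, hence $M^\infty<M$. Therefore $M^\infty$ is contained in every almost-malnormal subgroup containing $H$, so it equals the almost-malnormal hull. The one subtlety to be careful about is the transfinite induction at limit ordinals: one needs that $\bigcup_{\gamma<\lambda} N^{q,\gamma}_G(H) < M$ when each term is, which is clear, and that $N^q_G$ applied to this union stays inside $M$, which is exactly part 2 applied with the union in the role of $H$ (the union is infinite since it contains $H$). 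Finally, part 4 is essentially a reformulation: if $N^{q,\alpha}_G(H)=G$ for some $\alpha$, the chain $\{N^{q,\beta}_G(H)\}_{\beta\le\alpha}$ witnesses that $H$ is wq-normal (each term is q-normal in the next by definition of the q-normaliser, and the union condition at limits holds by construction). Conversely, given an ascending q-normal sequence $H=H_0<\cdots<H_\alpha=G$, one shows by transfinite induction that $H_\beta < N^{q,\beta}_G(H)$: at a successor step $H_{\beta+1}$ q-normalises $H_\beta<N^{q,\beta}_G(H)$... here one must be slightly careful, since $H_\beta$ being q-normal in $H_{\beta+1}$ gives $H_{\beta+1}<N^q_{H_{\beta+1}}(H_\beta)< N^q_G(H_\beta)$, and by part 1, $N^q_G(H_\beta) < N^q_G(N^{q,\beta}_G(H)) = N^{q,\beta+1}_G(H)$ (up to absorbing the union, which only helps); limits are handled by unions. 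Hence $G=H_\alpha<N^{q,\alpha}_G(H)<G$.

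I expect the main obstacle to be the bookkeeping in the transfinite induction for part 3, specifically making sure that at every limit stage the relevant union is still an \emph{infinite} subgroup (so that ``q-normaliser'' is defined) and still sits inside the candidate almost-malnormal hull, and that the sequence genuinely stabilises rather than growing forever — this is a standard cardinality argument but must be stated cleanly. The rest is a routine unwinding of definitions, with the recurring trick being that $N^q_G(\cdot)$ is generated by elements $g$ with infinite $g(\cdot)g^{-1}\cap(\cdot)$, so monotonicity-type statements need only be checked on generators.
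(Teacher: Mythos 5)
Your proposal is correct and follows essentially the same route as the paper: monotonicity of $N^q_G$ on generators for parts 1--2, the cardinality argument plus ``self-q-normalising $\Rightarrow$ almost-malnormal'' sandwich for part 3, and the reformulation via the iterated q-normaliser chain for part 4. You are somewhat more explicit than the paper about the limit-ordinal bookkeeping and the converse direction of part 4 (which the paper relegates to the remark following the lemma), but these are elaborations of the same argument, not a different one.
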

Compare with Peterson \& Thom \cite[Lemma 5.2 and Remark 5.3]{PT} for a slightly different argument. 
\begin{proof}
\textbf{1}: is direct from the definitions. Since $g H g^{-1} \cap H \subset g Kg^{-1} \cap K$ and $g$ is so that the left side of the inclusion is infinite, then so is the right side. 

\textbf{2}: is also easy. Again $g H g^{-1} \cap H \subset g Mg^{-1} \cap M$. For any $g \notin M$, right side is finite, hence the left side is also finite.

\textbf{3}: 
Since at least one element is added at the ordinals where the sequence is not stable, the sequence has to stabilise at some point (at the latest when $|\alpha| > |G|$).
Let $K = N^{q,\alpha}_G(H)$ be the subgroup where the sequence stabilises and $M$ be the almost-malnormal hull of $H$.
By definition, if $N^q(K) = K$ then $K$ is almost-malnormal; hence $M < K$.
% In particular, if the sequence stabilises at the ordinal $\alpha$, then $N^{q,\alpha}_G(H)$ is almost-malnormal. 
By 2, no element of $G \setminus M$ can belong to $N^{q,\alpha}_G(H)$ (for any $\alpha$); hence $K < M$

\textbf{4} is a direct consequence of 3.
\end{proof}
In fact, the sequence $N^{q,\alpha}_G(H)$ is actually the shortest ascending q-normal sequence making $H$ wq-normal in its almost-malnormal hull $M$.
Indeed, if $H_0 < H_1$ and $H_1 < H_2$ are q-normal inclusions, then $H_1 < N^q(H_0)$ and $H_2 < N^{q,2}(H_0)$.

% However, if $H<H'$ are infinite subgroups of $G$ then $N^q_G(H) < N^q_G(H')$ (whereas $N_G(H) > N_G(H')$).

% An infinite almost-malnormal subgroup is self-normalising.
% Every subgroup is a ascendant subgroup in a self-normalising subgroup 
% and every subgroup is contranormal in a ascendant subgroup.
%     every subgroup is [...contra-wq-normal...] in a wq-normal subgroup? 
% [$H$ is contranormal = only normal subgroup containing $H$ is $G$]
% It is well-known that every subgroup is an ascendant subgroup in a self-normalising subgroup.

In short, by Lemma \ref{twqnor-amal} or Peterson \& Thom's \cite[Remark 5.3]{PT}, every infinite subgroup $H < G$ is wq-normal in an almost-malnormal subgroup. 
The case where $G$ itself is the almost-malnormal subgroup is wq-normality.

Also, if $H<K<G$ and $H$ is q-normal in $G$ then $K$ is also q-normal in $G$. 
However, if $H$ is q-normal in $K$ and $K$ is q-normal in $G$, it could happen that $H$ is not q-normal in $G$.
Another weakening of normality (s-normal) is better behaved in this respect (see Peterson \& Thom \cite[\S{}5]{PT}).

A nice reference on malnormal subgroups in infinite groups is the work of de la Harpe \& Weber \cite{HW}. For example, \cite[Proposition 2.(vii)]{HW} shows that a group without $2$-torsion and with an infinite cyclic normal subgroup has no malnormal subgroups.

% Recall that a group is \textbf{hyperabelian} if there is a ascendant subgroup $H_0$ (with respect to the ascending sequence $\{H_\beta\}_{\beta \leq \alpha}$) so that, for any $\beta \leq \alpha$, $H_\beta / \cup_{\gamma < \beta} H_\gamma$ is Abelian. 
% \textbf{Solvable} groups correspond to the case where no infinite ordinal is needed. 
% \textbf{Polycyclic} groups to the case where no infinite ordinal is needed and all the quotients $H_i/H_{i-1}$ are cyclic (finite or infinite)

%--------------------------------
\subsection{Untwisting cocycles } \label{suntwist}
%--------------------------------

The remainder of this section is dedicated to adapt the proof a result of Peterson \& Thom \cite[Theorem 5.6]{PT} which concerns the reduced cohomology of $\lmd_{\ell^2G}$, the left-regular representation on $\ell^2G$ (see also Popa \cite[Lemma 2.4]{Po} for a prior version of these arguments on another type of cocycles). 

A linear representation $\pi: G \to \mr{GL}(\ms{V})$ (where $\ms{V}$ is an infinite dimensional vector space) is said to have \textbf{finite stabilisers} if for all $\xi \in \ms{V} \setminus \{0\}$, the set $\{ g \in G \mid \pi(g) \xi = \xi\}$ is finite.
Finite stabilisers is equivalent to: for any infinite subgroup $H <G$, $\pi_{| H}$ is ergodic.

Any mildly mixing representation has finite stabilisers. 
There are unitary representations with finite stabilisers which are not weakly mixing 
(consider $\pi: \zz \to \Uni(1)$ given by $z \mapsto e^{2 \pi \imath z \alpha}$ where $\alpha \in \rr \setminus \qq$) 
and there are weakly mixing unitary representations which do not have finite stabilisers 
(take any mixing representation on $G$ and extend it trivially to $G \times \zz$; see the end of \S{}\ref{sques} for an interesting example due to Shalom \cite[Theorem 5.4.1]{Shal}). 

Finite stabilisers [in infinite groups] implies that the representation has \textbf{no finite subrepresentations} 
(\ie does not contain a subrepresentation which factors through a finite quotient group) 
and, consequently, is ergodic.
So ``finite stabilisers'' and ``weakly mixing'' are two conditions which lie between ``mildly mixing'' and ``no finite subrepresentations'' but are inequivalent. 
Finite stabilisers is better behaved in that it is inherited by subrepresentations and infinite subgroups.

Given a representation $\pi$ and a cocycle $b \in Z^1(G,\pi)$, let $\ker b = \{ g \in G \mid b(g) =0\}$.
The following lemma is a straightforward adaptation of a part of the proof of Peterson \& Thom's \cite[Theorem 5.6]{PT} or Popa \cite[Lemma 2.4]{Po}.
\begin{lem}\label{tlempt-l}
Assume $\pi$ has finite stabilisers and let $b \in Z^1(G,\pi)$ be a cocycle, then $\ker b$ is a subgroup which is [either finite, equal to $G$, or] almost-malnormal in $G$.
\end{lem}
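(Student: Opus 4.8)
The statement is that $\ker b$ is a subgroup, and it is either finite, almost-malnormal, or all of $G$. First I would record the easy algebraic facts. That $\ker b$ is a subgroup follows from the cocycle relation: $b(e)=0$ so $e\in\ker b$; if $b(g)=b(h)=0$ then $b(gh)=\pi(g)b(h)+b(g)=0$; and $b(g^{-1})=-\pi(g^{-1})b(g)=0$ when $b(g)=0$. So $\ker b$ is genuinely a subgroup. Now fix $K=\ker b$ and suppose $K$ is infinite and $K\neq G$; the goal is to show $K$ is almost-malnormal, i.e.\ $K\cap gKg^{-1}$ is finite for every $g\in G\setminus K$.

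**Main step.** Suppose for contradiction that there is some $g\notin K$ with $K\cap gKg^{-1}$ infinite. The key identity is \eqref{eqcocpopa}: for $k=ghg^{-1}$,
\[
b(g)-\pi(k)b(g) = b(k) - \pi(g)b(g^{-1}kg).
\]
I want to apply this with $h$ ranging over a set of elements such that both $h$ and $k=ghg^{-1}$ lie in $K$. Precisely, if $k\in K\cap gKg^{-1}$, then $k\in K$ (so $b(k)=0$) and $h:=g^{-1}kg\in K$ (so $b(h)=b(g^{-1}kg)=0$). Plugging both into the identity gives $b(g)-\pi(k)b(g)=0$, i.e.\ $\pi(k)b(g)=b(g)$, for every $k\in K\cap gKg^{-1}$. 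Since $K\cap gKg^{-1}$ is assumed infinite, the vector $b(g)$ has an infinite stabiliser under $\pi$. By the finite-stabilisers hypothesis, this forces $b(g)=0$, i.e.\ $g\in K$ — contradicting $g\notin K$. Hence no such $g$ exists, and $K$ is almost-malnormal.

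**Remarks on obstacles.** There is essentially no serious obstacle here; the content is entirely in selecting the right substitution into the conjugation identity \eqref{eqcocpopa} and in invoking finite stabilisers. The one point to be careful about is that the hypothesis "finite stabilisers" is stated for $\xi\in\ms{V}\setminus\{0\}$, so the deduction "$b(g)$ has infinite stabiliser $\Rightarrow b(g)=0$" is exactly the contrapositive, and one must make sure $K\cap gKg^{-1}$ being infinite really does give infinitely many distinct elements fixing $b(g)$ — which it does, since these are group elements in $G$ and $\pi$ is a homomorphism, so each $k\in K\cap gKg^{-1}$ fixes $b(g)$. Finally, one should double-check the trichotomy is genuinely exhaustive: if $K$ is neither finite nor equal to $G$, it is infinite and proper, and the argument above applies, giving almost-malnormality; the three cases are not mutually exclusive (e.g.\ $G$ itself is almost-malnormal), which matches the phrasing "either \dots, \dots, or \dots" in the statement.
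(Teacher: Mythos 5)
Your proposal is correct and follows the paper's own argument essentially verbatim: verify $\ker b$ is a subgroup via the cocycle relation, then for $g\notin\ker b$ with $\ker b\cap g(\ker b)g^{-1}$ infinite, apply \eqref{eqcocpopa} to each $k$ in that intersection to get $\pi(k)b(g)=b(g)$, and invoke finite stabilisers to force $b(g)=0$, a contradiction. No discrepancies to report.
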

\begin{proof}
The cocycle relation shows that $K:= \ker b$ is a subgroup: $b(e) =0$ (so $e \in K$), $b(gh) = b(g) + \pi(g) b(h) = 0 + \pi(g) 0 = 0$ and $b(g^{-1}) = - \pi(g^{-1})b(g) =0$.
Assume $K \neq G$, $K$ is infinite and $K$ is not almost-malnormal. 
Then there exists $g \in G \setminus K$ with $gKg^{-1} \cap K$ infinite. 
Now, $\forall k \in gKg^{-1} \cap K$, \eqref{eqcocpopa} gives $b(g) - \pi(k)b(g) = b(k) - \pi(g) b(g^{-1}kg) = 0$.
Since $gKg^{-1} \cap K$ is infinite, $b(g) = \pi(k) b(g)$ for infinitely many $k \in G$. 
However, $\pi$ has finite stabilisers, so this implies that $b(g) = 0$. 
But then $g \in K$, a contradiction. 
\end{proof}
If one is a wee-bit more careful in writing it down, the proof of Lemma \ref{tlempt-l} works even if $(V,+)$ is not Abelian, \ie it still works in the case $(V,+)$ is a [generic] group and $\pi: G \to \mr{Aut}(V)$ a homomorphism.

That said, the proof of Peterson \& Thom \cite[Theorem 5.6]{PT} goes on without problems if there are no almost-invariant vectors. 
\begin{teo}\label{tteoptweak-t}
Assume $\pi$ has finite stabilisers and there is a infinite subgroup $H < G$ so that $H^1(H,\pi_{| H}) = 0$.
Let $K$ be the almost-malnormal hull of $H$.
Then $H^1(K,\pi_{|K}) =0$

In particular, if $H$ is wq-normal in $G$, then $H^1(G,\pi) = 0$.
\end{teo}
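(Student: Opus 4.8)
The plan is to run a transfinite induction along the ascending q-normal sequence witnessing the wq-normality of $H$ in $K$ (which, by Lemma \ref{twqnor-amal}.3, is exactly the tower of iterated q-normalisers $\{N^{q,\beta}_G(H)\}$). At each successor step one must propagate the vanishing of reduced\slash unreduced $H^1$ across a single q-normal inclusion $L \lhd_q L'$, and at limit stages one must handle the union $\cup_{\beta<\gamma}N^{q,\beta}_G(H)$. So the real content is a single lemma: if $L < L'$ is q-normal, $\pi$ has finite stabilisers, and $H^1(L,\pi_{|L}) = 0$, then $H^1(L',\pi_{|L'})=0$.

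For that lemma I would argue as in Peterson--Thom: take $b \in Z^1(L',\pi_{|L'})$. By hypothesis $b_{|L}$ is a coboundary, so there is $\xi \in \BB$ with $b(h) = (\pi(h)-1)\xi$ for all $h \in L$; replacing $b$ by $b - d\xi$ (which does not change its cohomology class) we may assume $b_{|L} \equiv 0$, i.e.\ $L \subset \ker b$. Now let $A$ be a generating set of $L'$ with $gLg^{-1}\cap L$ infinite for every $g \in A$. Fix $g \in A$. For every $k \in gLg^{-1}\cap L$, identity \eqref{eqcocpopa} with this $k$ gives $b(g) - \pi(k)b(g) = b(k) - \pi(g)b(g^{-1}kg)$; since $k \in L$ and $g^{-1}kg \in L$, both terms on the right vanish, so $\pi(k)b(g) = b(g)$ for infinitely many $k$. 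Finite stabilisers then force $b(g)=0$, so $g \in \ker b$ for every $g \in A$; since $A$ generates $L'$, $\ker b = L'$, i.e.\ $b \equiv 0$. Hence $b = d\xi$ was a coboundary all along and $H^1(L',\pi_{|L'})=0$. Here I am using that finite stabilisers pass to the infinite subgroup $L'$ (noted in \S\ref{suntwist}). At a limit ordinal $\gamma$, set $L_\gamma = \cup_{\beta<\gamma}N^{q,\beta}_G(H)$; a cocycle restricted to each $N^{q,\beta}_G(H)$ is a coboundary, but the vectors $\xi_\beta$ implementing it need not be coherent — however one only needs that $L_{<\gamma}$ is q-normal (indeed normal) in $L_\gamma$ with $H^1$ vanishing at level $L_{<\gamma}$, which is the inductive hypothesis, and then $L_{<\gamma}$ is a directed union so any cocycle trivial on each $N^{q,\beta}_G(H)$ requires a compatibility argument; the cleaner route is simply to note that if $b \in Z^1(L_\gamma,\pi)$ then $b$ restricted to the cofinal subgroup $L_{<\gamma}$ is a coboundary $d\xi$ by the induction hypothesis applied termwise and a direct-limit argument, then re-run the q-normal step with generating set $\cup_{\beta}A_\beta \cup (\text{generators of } L_\gamma \text{ over } L_{<\gamma})$. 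Since $L_{<\gamma}$ is q-normal in $L_\gamma$, the untwisting works verbatim.

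The transfinite induction then terminates: by Lemma \ref{twqnor-amal}.3 the sequence $\{N^{q,\beta}_G(H)\}$ stabilises precisely at the almost-malnormal hull $M$ of $H$, and when $H$ is wq-normal in $G$ this hull is $G$ (equivalently $N^{q,\alpha}_G(H)=G$ for some $\alpha$ by Lemma \ref{twqnor-amal}.4). Applying the successor and limit steps all the way up gives $H^1(N^{q,\alpha}_G(H),\pi_{|\cdot}) = 0$, i.e.\ $H^1(M,\pi_{|M})=0$ in general and $H^1(G,\pi)=0$ in the wq-normal case. The bookkeeping for the base case requires $H$ infinite (so that $N^q_G(H)$ makes sense) — this is in the hypotheses — and the degenerate possibility $\ker b$ finite from Lemma \ref{tlempt-l} does not arise here because the $\ker b$ in question contains the infinite group $L$ by construction.

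The main obstacle, and the only place care is needed, is the limit-ordinal step: ensuring that a cocycle on $L_\gamma = \cup_{\beta<\gamma}N^{q,\beta}_G(H)$ which is cohomologically trivial on each $N^{q,\beta}_G(H)$ is cohomologically trivial on $L_{<\gamma}$ before one can re-twist. One cannot just glue the vectors $\xi_\beta$; instead one should argue that $L_{<\gamma}$ is a directed union of the $N^{q,\beta}_G(H)$ and that the restriction map $H^1(L_{<\gamma},\pi) \to \varprojlim H^1(N^{q,\beta}_G(H),\pi)$ is injective (a standard fact: a cocycle on a directed union vanishing in cohomology on every term of the union vanishes in cohomology, since one can choose the implementing vectors cofinally and use that on the overlap two implementing vectors differ by an invariant vector, of which there are none by ergodicity — here guaranteed since finite stabilisers implies ergodic). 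Everything else is the verbatim untwisting computation above, and the termination is handed to us by Lemma \ref{twqnor-amal}.
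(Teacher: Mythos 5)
Your argument is correct, but it is not the route the paper takes: you run the Popa-style transfinite induction up the tower of iterated q-normalisers, which is precisely the alternative the paper sketches (and mildly deprecates as ``clumsier'') right after the theorem, via Lemma \ref{tlempopastyle-l}. The paper's own proof is a one-step affair: untwist $b$ by a single coboundary $z = d\xi$ so that $\ker(b-z) \supset H$, then invoke Lemma \ref{tlempt-l}, which classifies the kernel of any cocycle as finite, almost-malnormal, or the whole group; since $\ker(b-z)$ contains the infinite group $H$, it must be an almost-malnormal subgroup containing $H$, hence contains the almost-malnormal hull $K$ outright. This swallows the entire tower at once and sidesteps your limit-ordinal discussion completely. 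What your route buys is the weaker hypothesis the paper alludes to (one only needs finite stabilisers for the restrictions $\pi_{|H_\alpha}$ along the ascending sequence --- automatic here since finite stabilisers pass to infinite subgroups); what the paper's route buys is brevity. Two remarks on your write-up: your successor step is exactly the computation of Lemma \ref{tlempt-l} and Lemma \ref{tlempopastyle-l} via \eqref{eqcocpopa}, and is fine; and your worry at limit ordinals resolves even more cleanly than you state, since if $b = d\xi_\beta$ on $N^{q,\beta}_G(H)$ and $b = d\xi_{\beta'}$ on the larger $N^{q,\beta'}_G(H)$, then $\xi_\beta - \xi_{\beta'}$ is fixed by the infinite group $N^{q,\beta}_G(H)$, so finite stabilisers force $\xi_\beta = \xi_{\beta'}$: all implementing vectors coincide and no projective-limit or coherence argument is needed.
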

\begin{proof}
Assume $b \in H^1(K,\pi_{|K})$ is non-trivial. 
By hypothesis, $b_{| H}$ has trivial class. 
Hence there is a $z \in B^1(H,\pi)$ so that $ b = z$ on $H$. 
Since $z(g) = \pi(g) \xi - \xi$ for some $\xi \in \ms{V}$, it turns out that $z \in B^1(G,\pi)$. 
Let $b' = b-z$, then $\ker b' \supset H$. 
By Lemma \ref{tlempt-l} and since $H$ is infinite, $\ker b'$ must be an almost-malnormal subgroup containing $H$.
This means that $\ker b'$ contains $K$.
Hence $b \equiv z$ on $K$, \ie $[b] = 0 \in H^1(K,\pi_{|K})$.

$H$ is wq-normal in $G$ if and only if almost-malnormal hull of $H$ is $G$; see Lemma \ref{twqnor-amal} or Peterson \& Thom's \cite[Remark 5.3]{PT}. 
\end{proof}
Like Popa \cite[\S{}2]{Po}, it is sometimes useful to put the mixing condition on the subgroup instead. 
For example, 
\begin{lem}\label{tlempopastyle-l}
Let $b \in Z^1(G,\pi)$ be a cocycle and let $H < \ker b$ be a subgroup of its kernel.
If $\pi_{|H}$ has finite stabilisers and $H<G$ is q-normal, then $\ker b=G$.
\end{lem}
The proof is exactly as in Lemma \ref{tlempt-l}.
One can then use transfinite induction to obtain Theorem \ref{tteoptweak-t}. 
However, one then needs to check that $\pi_{|H_\alpha}$ has finite stabilisers for any $H_\alpha$ in the ascending q-normal sequence starting at $H_0 = H$.
So in the end, the hypothesis is clumsier (though weaker) to formulate than ``$\pi$ has finite stabilisers''.

% 
% 
% \begin{proof}
% Note that, since $G$ is torsion-free, if $b$ is a cocycle and $b(h)=0$ then $\pgen{h} < \ker b$ is infinite (and by Lemma \ref{tlempt-l}, malnormal).
% Define an equivalence relation on $\dot{G}$: $g \sim h$ by ``$\forall b \in Z^1(G,\pi)$, $b(g) =0 \iff b(h)=0$''.
% The claim is that the $G_i$ are the equivalence classes of this equivalence relation (on $\dot{G}$).
% % Only $\Rightarrow$ needs a proof in the $\iff$.
% % Indeed, assume $\forall b'$, $b'(g) =0 \implies b'(h)=0$ but there is a $b$ so that $b(h) =0$ and $b(g) \neq 0$.
% % 
% % For $\gamma \in G$, let $d_\gamma : \Hi \to \Hi$ be defined by $d_\gamma(\xi) = \big(1- \pi(\gamma)\big) \xi$.
% % Because $\pi$ has finite stabilisers, $\ker d_\gamma = \{0\}$ (otherwise the infinite subgroup $\pgen{\gamma}$ would have a fixed point).
% % This implies that $\img d_\gamma^*$ is dense in $\Hi$.
% % Since $d_\gamma^* = d_{\gamma^{-1}}$, this means that all the $d_\gamma$ have dense image.
% % 
% % $b(g) = \big(\pi(g)-1 \big) \xi$ for some $\xi \in \Hi$.
% 
% 
% \end{proof}

%----------------------------
%----------------------------
\section{Reduced cohomology}\label{sredcoh}
%----------------------------
%----------------------------

The aim of this section is to introduce reduced cohomology and to show how to reduce its study to that of harmonic (or $p$-harmonic) functions. 
There are two possibilities to introduce harmonic functions. The first is by considering cocycles which are ``minimal'' for some norm (\S{}\ref{ssharmcoc} and \S{}\ref{ssbanachrep}) and then project them (\S{}\ref{ssgrad}). The second is by a trick known as virtual coboundary (\S{}\ref{ssvirtcob}). 
On the way we show that in the mildly mixing setup virtual coboundaries are actually quite generic. \S{}\ref{ssvirtcobtree} gives an example of virtual coboundaries outside the setup of \S{}\ref{ssvirtcob}.

%----------------------------
\subsection{Unitary representation and harmonicity}\label{ssharmcoc}
%----------------------------

The proofs of this subsection may be found in Guichardet \cite[\S{}3-\S{}5]{Gu} and \cite[\S{}2]{GJ}.
There is a natural topology and, sometimes, a natural Banach space structure which can be put on $Z^1(G,\pi)$ which enables us to study it better.
For the rest of this subsection, it will be assumed that $G$ is a countable group which is generated by some finite subset $S$ with $S^{-1} = S$. 

In the next subsection, more general representations will be discussed while this subsection is devoted to the case of unitary representations (Hilbert spaces).
Thanks to the cocycle relation, a cocycle $b$ is completely determined by the values $\{b(h)\}_{h\in S}$. 
Given an measure $\mu$ with support $S$, it would be natural to introduce the following scalar product:
\[
(b \mid b')_{\mu}= \sum_{s\in S} \mu(s) \langle b(s),b'(s)\rangle,
\] 
for $b,b'\in Z^1(G,\pi)$. Since $b$ is identically $0$ on $G$ if and only if $b$ is identically $0$ on $S$, this scalar product is be non-degenerate. 

To be sure that this scalar product defines a Hilbert space, we need to check that the space $Z^1(G,\pi)$ is complete for the norm $\|\cdot\|_\mu$. 
The classical topology on $Z^1(G,\pi)$ is given by the topology of uniform convergence on compact subsets of $G$. 
Since $G$ is countable and endowed with the discrete topology, it is well-known that this topology turns $Z^1(G,\pi)$ into a Fr\'echet space. 
See Guichardet \cite[\S{}4]{Gu} or \cite[\S{}2]{GJ} for proofs and discussions that these topologies coincide.

When $G$ is not finitely generated, it is not completely impossible to go further. 
It is possible, given any cocycle $b$ to define a $\mu$ so that $\|b\|_\mu < \infty$ and $b$ is in the $\|\cdot\|_\mu$-norm closure of $Z^1_\mu(G,\pi)$. 
But there is in general no way of picking a $\mu$ which works for all cocycles.

So let $\mu$ be some fixed measure on the finite generating set $S$ and let $d$ be the coboundary map. Simple computations give:
\[
d_\mu^* b = -\sum_{s\in S} \mu(s) (b(s^{-1})+b(s)).
\] 
By generic considerations on Hilbert spaces, $Z^1(G,\pi)= \ker d_\mu^* \oplus (\ker d_\mu^*)^\perp$ and 
the orthogonal complement of $\ker d_\mu^*$ coincides with $\srl{\img d}$. 
Moreover, this latter space is just the closure of $B^1(G,\pi)$ with respect to the norm $\|\cdot\|_\mu$ (or, equivalently, with respect to the topology of uniform convergence on compact sets). 
It will henceforth be denoted by $\srl{B}^1(G,\pi)$.
Therefore, one has the following general orthogonal decomposition.
\begin{prop}
Let $G$ be finitely generated by $S$, then
\[
Z^1(G,\pi)=\ker d_\mu^* \oplus \srl{B}^1(G,\pi).
\]
\end{prop}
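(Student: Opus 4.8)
The plan is to prove the orthogonal decomposition $Z^1(G,\pi)=\ker d_\mu^* \oplus \srl{B}^1(G,\pi)$ as a direct application of the projection theorem in Hilbert spaces, once the underlying functional-analytic setup is in place. First I would recall that, since $G$ is finitely generated by the symmetric set $S$ and $\mu$ has support $S$, the bilinear form $(b\mid b')_\mu=\sum_{s\in S}\mu(s)\langle b(s),b'(s)\rangle$ is a well-defined inner product on $Z^1(G,\pi)$ (non-degeneracy was already noted: a cocycle vanishing on $S$ vanishes on all of $G$ by the cocycle relation), and that $(Z^1(G,\pi),\|\cdot\|_\mu)$ is complete, hence a genuine Hilbert space. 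Completeness follows because the $\|\cdot\|_\mu$-topology agrees with the topology of uniform convergence on compacts (a limit of cocycles is a cocycle, the cocycle relation being a closed condition), which turns $Z^1(G,\pi)$ into a Fr\'echet space; since finitely many generators determine a cocycle, the two topologies coincide and the Fr\'echet space is in fact a Hilbert space for $\|\cdot\|_\mu$. These facts are exactly what the preceding paragraphs of the excerpt assert, so I may quote them.

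Next I would invoke the adjoint: $d_\mu:\BB\to Z^1(G,\pi)$ is a bounded operator (its image is $B^1(G,\pi)$), and $d_\mu^*$ is the bounded operator computed just above as $d_\mu^*b=-\sum_{s\in S}\mu(s)(b(s^{-1})+b(s))$. The general Hilbert-space identity $\ker d_\mu^* = (\img d_\mu)^\perp$ holds, and taking orthogonal complements (using that for a closed subspace $V$ one has $V^{\perp\perp}=V$ and $(V^\perp)^\perp=\srl V$) yields $(\ker d_\mu^*)^\perp = \srl{\img d_\mu} = \srl{B}^1(G,\pi)$. Finally, the orthogonal projection theorem in the Hilbert space $Z^1(G,\pi)$ gives the internal direct sum decomposition $Z^1(G,\pi)=\ker d_\mu^* \oplus (\ker d_\mu^*)^\perp = \ker d_\mu^* \oplus \srl{B}^1(G,\pi)$, which is the claim.

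The only point requiring a word of care — and the main (mild) obstacle — is the identification of the closure of $B^1(G,\pi)$ in the $\|\cdot\|_\mu$-norm with its closure in the topology of uniform convergence on compact sets, i.e. that $\srl{B}^1(G,\pi)$ is unambiguous. This again reduces to the coincidence of the two topologies on $Z^1(G,\pi)$: on a fixed finite generating set the $\|\cdot\|_\mu$-norm is equivalent to the sup over $S$, hence controls and is controlled by uniform convergence on the finite set $S$, and uniform convergence on $S$ propagates to uniform convergence on any finite (hence compact) subset of $G$ via the cocycle relation and an induction on word length. With that in hand, $\srl{\img d_\mu}$ computed in either topology is the same closed subspace, and the decomposition above is exactly the desired statement. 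No further estimates are needed; everything else is the standard $\ker T^* \oplus \srl{\img T}$ decomposition.
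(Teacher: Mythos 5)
Your argument is correct and is essentially the paper's own: equip $Z^1(G,\pi)$ with the Hilbert space structure coming from $\|\cdot\|_\mu$ (using the coincidence of this topology with uniform convergence on compacts to get completeness and the unambiguity of the closure of $B^1(G,\pi)$), then apply the standard decomposition $Z^1(G,\pi)=\ker d_\mu^*\oplus(\ker d_\mu^*)^\perp$ with $(\ker d_\mu^*)^\perp=\overline{\mathrm{Im}\,d}=\srl{B}^1(G,\pi)$. No gaps; this matches the paper's reasoning step for step.
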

The space $\ker d_\mu^*$ is the space of $\mu$-\textbf{harmonic cocycles}. 

A first straightforward consequence is the description of the reduced cohomology in terms of harmonic cocycles. 
Recall that the \textbf{reduced cohomology} group of $G$ taking value in $\pi$ is defined as the quotient space
\[
\srl{H}^1(G,\pi)=Z^1(G,\pi)/\srl{B}^1(G,\pi).
\]
\begin{cor}\label{tidharcocredcoh-c}
There is an isomorphism $\srl{H}^1(G,\pi)=\ker d_\mu^*$. 
\end{cor}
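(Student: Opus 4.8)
The plan is to chase the statement through the orthogonal decomposition just established. We have the splitting $Z^1(G,\pi)=\ker d_\mu^* \oplus \srl{B}^1(G,\pi)$ from the preceding proposition, and $\srl{H}^1(G,\pi)$ is by definition the quotient $Z^1(G,\pi)/\srl{B}^1(G,\pi)$. So the isomorphism $\srl{H}^1(G,\pi) \cong \ker d_\mu^*$ should be nothing more than the restriction to $\ker d_\mu^*$ of the quotient map $q : Z^1(G,\pi) \to Z^1(G,\pi)/\srl{B}^1(G,\pi)$, combined with the general fact that when a topological vector space splits as a direct sum $A \oplus B$ (with $B$ closed), the composite $A \hookrightarrow A\oplus B \twoheadrightarrow (A\oplus B)/B$ is a (topological) isomorphism.

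Concretely, first I would exhibit the map: send a harmonic cocycle $b \in \ker d_\mu^*$ to its class $[b] \in \srl{H}^1(G,\pi)$; this is visibly linear. Second, I would check injectivity: if $b \in \ker d_\mu^*$ and $[b]=0$, then $b \in \srl{B}^1(G,\pi)$, so $b \in \ker d_\mu^* \cap \srl{B}^1(G,\pi) = \{0\}$ by the directness of the sum (recall $\srl{B}^1(G,\pi) = (\ker d_\mu^*)^\perp$, so the intersection is trivial). Third, surjectivity: given any $c \in Z^1(G,\pi)$, the decomposition writes $c = b + b_0$ with $b \in \ker d_\mu^*$ and $b_0 \in \srl{B}^1(G,\pi)$; then $[c] = [b]$, so the class of $c$ is hit by the harmonic cocycle $b$. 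That proves it is a bijective linear map. If one wants the isomorphism to be topological, one invokes that the projection onto $\ker d_\mu^*$ along $\srl{B}^1(G,\pi)$ is continuous (an orthogonal projection for $\|\cdot\|_\mu$, and the $\|\cdot\|_\mu$-topology agrees with the topology of uniform convergence on compacts by Guichardet, as recalled above), hence descends to a continuous inverse of the quotient map.

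There is essentially no hard step here; the statement is a corollary precisely because the real content — that $\srl{B}^1(G,\pi)$ is exactly the $\|\cdot\|_\mu$-closure of $B^1(G,\pi)$ and equals $(\ker d_\mu^*)^\perp$, independently of the choice of $\mu$ — is already packaged into the proposition. The only point requiring a word of care is that one is quotienting by the \emph{reduced} coboundaries $\srl{B}^1$ and not by $B^1$, so that the quotient is Hausdorff and the naive linear-algebra argument for a direct-sum complement being isomorphic to the quotient actually applies; this is where completeness of $Z^1(G,\pi)$ for $\|\cdot\|_\mu$ (equivalently, its Fréchet structure) is used. I would therefore present the proof as: (i) recall $\srl{H}^1(G,\pi) = Z^1/\srl{B}^1$ and $Z^1 = \ker d_\mu^* \oplus \srl{B}^1$; (ii) observe the restriction of $q$ to $\ker d_\mu^*$ is the desired map; (iii) injectivity from directness of the sum; (iv) surjectivity from existence of the decomposition — a half-page at most.
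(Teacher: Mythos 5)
Your argument is correct and is exactly the intended one: the paper treats this as an immediate consequence of the orthogonal decomposition $Z^1(G,\pi)=\ker d_\mu^* \oplus \srl{B}^1(G,\pi)$ and gives no further proof, and your restriction-of-the-quotient-map argument (injectivity from directness of the sum, surjectivity from the decomposition) is precisely how that consequence is drawn. Nothing to add.
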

$\pi$ is said to have \textbf{no almost invariant vectors} when $B^1(G,\pi) = \srl{B}^1(G,\pi)$ (equivalently, when $d$ has closed image). Note that when this is the case, then $H^{1}(G,\pi)=\ker d_\mu^*$

\subsection{Gradient conditions}\label{ssgrad}
%----------------------------------

It turns out that, when considering the left-regular representation, both the left- and right-Cayley graphs come up naturally. 
Assume $S$ is a symmetric generating set for $G$. 
Recall that the left-\textbf{Cayley graph} (resp. right-) of $G$ with respect to $S$, denoted $\cayl(G,S)$ (resp. $\cayr(G,S)$), is the graph whose vertex set is $G$ and whose edge set is $E = \{ (x,y) \in G \times G \mid \exists s \in S$ so that $s x= y\}$ 
(resp. 
% $E = \{ (x,y) \in G \times G \mid \exists s \in S$ 
so that $xs= y\}$). 

Let $V$ be Banach space (usually $\rr$ or $\cc$). 
Given a function $f:G \to V$ and a Cayley graph, the \textbf{gradient} of this function is the function $\nabla f: E \to V$ defined by $\nabla f (x,y) = f(y) - f(x)$.

It turns out $\nabla: \ell^p (G,V) \to \ell^p (E,V)$ is bounded exactly when $S$ is finite. 
There is a natural pairing for finitely supported functions $f: X \to V$ and $g:X \to V^*$ (where $X = G$ or $E$) given by $\pgen{f \mid g} = \sum_{x \in X} f(x) g(x)$. 
In the case $S$ is finite, $\nabla$ then has a adjoint $\nabla^*$ which associates to $g:E \to V^*$ the function $\nabla^*g: G \to V$ defined by $\nabla^* g (x) = \sum_{y \sim x} g(y,x) - g(x,y)$.

Given a subspace $\XX$ of functions $G \to V$ (\eg $\ell^p$, WAP, etc..) Say that a function has $\XX$-gradient (on $\cayr(G,S)$) if for any $s \in S$, the function $x \mapsto g(xs) - g(x)$ belongs to $\XX$.

Note that when $S$ is infinite, $\nabla f \in \ell^p E$ implies $f$ has $\ell^p$-gradient; but the converse is false.

A representation $\pi$ has \textbf{finitarily coefficients}\footnote{For our actual purposes, it is sufficient to assume this works for $n$ equal to the minimal number of elements required to generate $G$ as a group.}
in $\XX$ if for any $n \in \nn$ and $\xi_1,\ldots,\xi_n \in \Hi$ there is a $\eta  \in \Hi$, so that $\kappa_{\eta,\xi_i}$ belongs to $\XX$ and for some $i$, $\kappa_{\eta,\xi_i} \not\equiv 0$. 
This holds if there is a dense subspace $\Hi' \subset \Hi$ so that for any $\xi \in \Hi$ and $\eta  \in \Hi' \setminus \{0\}$, $\kappa_{\eta,\xi}$ belongs to $\XX$.
For example, the left-regular representation has finitarily coefficients in $\ell^2$ (let $\Hi' \subset \ell^2G$ be the space of finitely supported functions).

Weakly mixing implies finitarily coefficients in $\XX = \ker m$ (where $m$ is the unique mean on WAP functions) and strongly mixing implies it for $\XX = c_0$. Having finitarily coefficients in $\XX = \ell^p$, is most probably stronger than strongly mixing.

A function $f$ on a [right-]Cayley graph is called $\mu$-harmonic if $\sum_{s \in S} \mu(s) \nabla f(x,xs) =0$.
\begin{lem}\label{tgradcond-l}
Assume $G$ is finitely generated, $\mu$ is symmetric (\ie $\mu(s) = \mu(s^{-1})$) and $b \in Z^1(G,\pi)$ is a harmonic cocycle. Then, for any $\eta \in \Hi$, the function $h_\eta(g) = \pgen{b(g) \mid \eta }$ is $\mu$-harmonic (on $\cayr(G,S)$).

If $\pi$ has finitarily coefficients in $\XX$, then there is a choice of $\eta$ so that $h_\eta: G \to \cc$ has $\XX$-gradient and is not constant.
\end{lem}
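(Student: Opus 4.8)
The plan is to verify the two assertions of Lemma~\ref{tgradcond-l} separately, the first being a direct translation of the harmonicity of the cocycle and the second being an application of the finitary-coefficients hypothesis to untwist the value $b(g)$ along the generating set.

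First I would prove harmonicity of $h_\eta$. Fix $\eta \in \Hi$ and set $h_\eta(g) = \pgen{b(g) \mid \eta}$. Using the cocycle relation $b(gs) = b(g) + \pi(g) b(s)$, one computes the gradient on $\cayr(G,S)$:
\[
h_\eta(gs) - h_\eta(g) = \pgen{ \pi(g) b(s) \mid \eta } = \pgen{ b(s) \mid \pi(g^{-1}) \eta }.
\]
Now average over $s \in S$ against $\mu$. Since $b$ is $\mu$-harmonic, $d_\mu^* b = -\sum_{s \in S}\mu(s)(b(s^{-1}) + b(s)) = 0$; combining this with the relation $b(s^{-1}) = -\pi(s^{-1})b(s)$ and, after re-indexing the sum using $\mu(s) = \mu(s^{-1})$, one gets $\sum_{s\in S}\mu(s) b(s) = 0$ (this is exactly the ``$\sum_s \mu(s) b(s) = 0$'' form of harmonicity recalled in the introduction). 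Hence
\[
\sum_{s \in S} \mu(s) \big( h_\eta(gs) - h_\eta(g) \big) = \pgen{ \textstyle\sum_{s} \mu(s) b(s) \,\big|\, \pi(g^{-1})\eta } = 0,
\]
which says precisely that $h_\eta$ is $\mu$-harmonic on $\cayr(G,S)$, i.e. $h_\eta(g) = \sum_s \mu(s) h_\eta(gs)$ for all $g$. This step is routine bookkeeping with the cocycle identities.

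Next I would handle the gradient condition. By the computation above, for each $s \in S$ the gradient function $x \mapsto h_\eta(xs) - h_\eta(x)$ equals $x \mapsto \pgen{ b(s) \mid \pi(x^{-1})\eta } = \srl{\kappa_{\eta, b(s)}(x)}$ (a coefficient of $\pi$, up to conjugation/inversion of the variable, which preserves membership in translation-invariant spaces such as $\ell^p$, $c_0$, $\ker m$). Now apply the finitary-coefficients hypothesis to the finite family $\xi_1, \dots, \xi_n$ where $\{\xi_1,\dots,\xi_n\} = \{ b(s) : s \in S\}$ (one may take $n$ the number of generators, as noted in the footnote): there is $\eta \in \Hi$ with $\kappa_{\eta,\xi_i} \in \XX$ for all $i$ and $\kappa_{\eta,\xi_i} \not\equiv 0$ for some $i$. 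For this $\eta$, every gradient $x \mapsto h_\eta(xs) - h_\eta(x)$ lies in $\XX$, so $h_\eta$ has $\XX$-gradient; and since some $\kappa_{\eta, b(s)}$ is not identically zero, the gradient of $h_\eta$ in direction $s$ is nonzero, so $h_\eta$ is not constant.

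The only genuinely delicate point is to make sure that $b(s)$ is not zero for all $s \in S$ when we want non-constancy — but if $b(s) = 0$ for every generator $s$ then $b \equiv 0$ by the cocycle relation, so for a nonzero harmonic cocycle $b$ the family $\{b(s)\}_{s \in S}$ contains a nonzero vector, and the finitary-coefficients property guarantees a pairing $\eta$ detecting it; one should also note that conjugating the variable $g \mapsto g^{-1}$ and the appearance of $\pi(x^{-1})$ rather than $\pi(x)$ in the coefficient are immaterial because the relevant classes $\XX$ ($\ell^p$, $c_0$, $\ker m$) are stable under the left-translation action and under $x \mapsto x^{-1}$, and complex conjugation. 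This is where I would be most careful, but it is a verification rather than an obstacle.
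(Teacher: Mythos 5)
Your proof is correct and follows essentially the same route as the paper: the same gradient computation $h_\eta(xs)-h_\eta(x)=\pgen{\pi(x)b(s)\mid\eta}$ giving harmonicity from $\sum_s\mu(s)b(s)=0$, and the same application of the finitarily-coefficients hypothesis to the finite family $\{b(s)\}_{s\in S}$. The extra care you take about the conjugation of the variable and the stability of $\XX$ is a point the paper leaves implicit, but it changes nothing in substance.
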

\begin{proof}
Note that 
\[
h_\eta(xs)-h_\eta(x) = \pgen{ b(xs) - b(x) \mid \eta } = \pgen{ \pi(x)b(s) \mid \eta }.
\]
When $\mu$ is symmetric $d_\mu^* b =0 \iff \sum_{s \in S} \mu(s)b(s) =0$. Since
\[
\sum_{s} \mu(s) \big( h_\eta(xs)-h_\eta(x) \big) = \sum_{s} \mu(s) \pgen{ b(xs) - b(x) \mid \eta } = \pgen{ \pi(x) \sum_{s} \mu(s)b(s) \mid \eta } =0,
\]
$h_\eta$ is $\mu$-harmonic.

Next, let $\xi_s := b(s)$ for $s \in S$. Since $b$ is not trivial, at least one of the $\xi_s$ is not trivial. 
Since $\pi$ has finitarily coefficients in $\XX$, there is a choice of $\eta$ (with respect to $\{\xi_s\}_{s \in S}$) so that $h_\eta$ has gradient in $\XX$ and is not trivial.
\end{proof}

%----------------------------------
\subsection{Banach representations}\label{ssbanachrep}
%----------------------------------

The aim of this section is to show that, if one is ready to consider some non-linear brethren of harmonicity, there is also an isomorphism for the reduced cohomology of representations in strictly convex Banach space. 

Let $\pi: G \to \LI(\BB)$ be a linear isometric representation of $G$ on the strictly convex Banach space $\BB$. 
Throughout this subsection, $G$ is assumed finitely generated and $\mu$ is as before (symmetric and supported on a finite generating set). 

Introduce, for some $p \in ]1,\infty[$
\[
\| b\|_\mu = \bigg( \sum_s \mu(s) \| b(s) \|_\BB^p \bigg)^{1/p}
\]
The same argument as in the Hilbertian case shows this norms induces the same topology as the the topology of uniform convergence on compacts.

Recall from Benyamini \& Lindenstrauss \cite[Proposition 4.8 and Appendix A]{BL} that if $\BB$ is strictly convex then
\[
\frac{ \mr{d}}{\mr{d}t} \| x+ t y\|_\BB \bigg|_{t=0}  = \pgen{j(x) \mid y}
\]
where $j(x)$ is the unique element of $\BB^*$ with $\|j(x)\|_{\BB^*} = 1$ and $\pgen{j(x) \mid x } = \|x\|_\BB$. 
Sometimes, $j$ is called the duality map. Given the norm introduced above, it will be more convenient to speak of 
\[
j_p(x) := \|x\|^{p-1}_\BB j(x).
\]
From there, one sees the exponent $p$ essentially only change the homogeneity of the function $j$. 
It is natural to pick $p \neq 2$ when $\BB = \L^p_0(X,\nu)$ and $\pi$ is an $\L^p$-representation of $G$.
\begin{lem}
Given $b \in \srl{H}^1(G,\pi)$, then $b$ has minimal $\|\cdot\|_\mu$-norm in its reduced cohomology class if and only if $b$ is $(j_p,\mu)$-harmonic,  \ie
\[
\sum_s \mu(s) j_p(b(s)) =0
\]
\end{lem}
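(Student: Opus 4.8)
The statement is a first-order optimality condition, so the natural approach is to differentiate the function $t \mapsto \|b + t\,d\xi\|_\mu^p$ at $t=0$ for every $\xi \in \BB$ and observe that $b$ is norm-minimising in its reduced class if and only if this derivative vanishes for all $\xi$. First I would recall that the reduced cohomology class of $b$ is exactly $b + \srl{B}^1(G,\pi)$, and that by the norm-equivalence established just above, $\srl{B}^1(G,\pi)$ is the $\|\cdot\|_\mu$-closure of $\{d\xi : \xi \in \BB\}$; since the map $\xi \mapsto d\xi$ is bounded into $(Z^1,\|\cdot\|_\mu)$, it suffices to test minimality against the coboundaries $d\xi$ themselves rather than against the whole closure. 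By strict convexity of $\BB$, the function $x \mapsto \|x\|_\BB^p$ is convex and Gâteaux differentiable on $\BB$ (for $p \in\,]1,\infty[$), so $F(t) := \|b + t\,d\xi\|_\mu^p = \sum_s \mu(s)\|b(s) + t(d\xi)(s)\|_\BB^p$ is a convex differentiable function of $t \in \rr$; hence $b$ minimises $\|\cdot\|_\mu$ over $b + \{d\xi : \xi\in\BB\}$ iff $F'(0) = 0$ for every $\xi$.

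Next I would compute $F'(0)$. Using the formula quoted from Benyamini \& Lindenstrauss, $\tfrac{\mr d}{\mr dt}\|x + ty\|_\BB\big|_{t=0} = \pgen{j(x)\mid y}$, together with the chain rule and the definition $j_p(x) = \|x\|^{p-1}_\BB j(x)$, one gets $\tfrac{\mr d}{\mr dt}\|x+ty\|^p_\BB\big|_{t=0} = p\,\pgen{j_p(x)\mid y}$. Summing,
\[
F'(0) = p \sum_s \mu(s)\,\pgen{j_p(b(s)) \mid (d\xi)(s)} = p \sum_s \mu(s)\,\pgen{j_p(b(s)) \mid \pi(s)\xi - \xi}.
\]
Now regroup the two terms: using $\sum_s \mu(s)\pgen{j_p(b(s))\mid \pi(s)\xi} = \sum_s \mu(s)\pgen{\pi(s)^* j_p(b(s))\mid \xi}$ and the symmetry $\mu(s) = \mu(s^{-1})$, one rewrites $F'(0)/p$ as $\pgen{\,\sum_s \mu(s)\big(\pi(s)^* j_p(b(s)) - j_p(b(s))\big)\,\mid\,\xi}$ — or, more simply, if one is content to phrase harmonicity in the ``$\sum_s \mu(s) j_p(b(s)) = 0$'' form as in the Hilbertian case, one first replaces $d\xi$ by the equivalent generating family of increments and uses the cocycle identity $b(s^{-1}) = -\pi(s^{-1})b(s)$ to fold the sum over $S$ into the stated one; this is the same bookkeeping that produced the formula for $d_\mu^*$ in \S\ref{ssharmcoc}. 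Vanishing of $F'(0)$ for all $\xi \in \BB$ is then equivalent to $\sum_s \mu(s) j_p(b(s)) = 0$, which is the asserted $(j_p,\mu)$-harmonicity.

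Finally I would tie up the ``iff'': the forward direction is immediate since $b$ minimal in its class certainly minimises over the smaller set $b + \{d\xi\}$, forcing $F'(0)=0$; for the converse, $F'(0)=0$ for all $\xi$ together with convexity of $t\mapsto F(t)$ gives $\|b\|_\mu \leq \|b + d\xi\|_\mu$ for all $\xi$, and then density of $\{d\xi\}$ in $\srl{B}^1(G,\pi)$ plus continuity of the norm upgrades this to $\|b\|_\mu \leq \|b + z\|_\mu$ for all $z \in \srl{B}^1(G,\pi)$, i.e.\ $b$ is minimal in its reduced class. The main obstacle, and the only point requiring genuine care, is the differentiation step: one must invoke strict convexity to guarantee that $j$ (hence $j_p$) is well-defined and single-valued and that $\|\cdot\|^p_\BB$ is Gâteaux differentiable, and one should check that differentiation under the finite sum $\sum_{s\in S}$ is legitimate (it is, since $S$ is finite and each summand is differentiable in $t$) — everything else is the linear-algebra rearrangement already rehearsed in the Hilbert-space case.
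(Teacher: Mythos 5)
Your proposal is correct and follows essentially the same route as the paper: differentiate the ($p$-th power of the) $\mu$-norm along coboundary directions, apply the Benyamini--Lindenstrauss formula to get $\sum_s \mu(s)\pgen{j_p(b(s))\mid \xi-\pi(s)\xi}=0$, and then fold the $\pi(s)^*$ term back using $b(s^{-1})=-\pi(s^{-1})b(s)$ and the symmetry of $\mu$. The one step you dismiss as ``bookkeeping'' is the identity $j_p(\pi(s^{-1})b(s))=\pi(s)^*j_p(b(s))$ (equivariance of the duality map under isometries, which the paper verifies by checking the two defining properties of $j_p$); this is genuinely needed to pass from $\sum_s\mu(s)\pgen{j_p(b(s))-\pi(s)^*j_p(b(s))\mid\xi}=0$ to $2\sum_s\mu(s)\pgen{j_p(b(s))\mid\xi}=0$, but it is a one-line check, so this is a presentational omission rather than a gap.
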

% XXX b \in Z and z \in B (switch!)
\begin{proof}
Note that the norm is strictly convex, hence for any $b' \in Z^1(G,\pi)$ there is a unique $b \in b' + \srl{B}^1(G,\pi)$ with minimal norm. 
Furthermore, by minimality, for any $z \in B^1(G,\pi)$.
\[
\begin{array}{rl}
\displaystyle 
0=  \frac{ \mr{d}}{\mr{d}t} \| b+ t z\|_\mu \bigg|_{t=0} 
& \displaystyle = \frac{ \mr{d}}{\mr{d}t} \bigg( \sum_s \mu(s) \| b(s) + tz(s) \|_\BB^p \bigg)^{1/p} \bigg|_{t=0}  \\
& \displaystyle = \frac{1}{p} \bigg( \sum_s \mu(s) \| b(s) + tz(s) \|_\BB^p \bigg)^{\tfrac{1}{p}-1} \bigg( \sum_s \mu(s) \frac{ \mr{d}}{\mr{d}t} \| b(s) + tz(s) \|_\BB^p \bigg) \bigg|_{t=0}  \\
& \displaystyle = \| b+ tz\|_\mu^{- (p-1)/p } \bigg( \sum_s \mu(s) \| b(s) + tz(s) \|_\BB^{p-1} \frac{ \mr{d}}{\mr{d}t}  \| b(s) + tz(s)  \|_\BB \bigg)\bigg|_{t=0} \\
& \displaystyle = \| b\|_\mu^{- (p-1)/p } \sum_s \mu(s) \pgen{ j_p(b(s)) \mid z(s)}.
\end{array}
\]
Writing $z(s) = \xi - \pi(s) \xi$ where $\xi \in \BB$, one gets
\[
0 = \sum_s \mu(s) \pgen{ j_p(b(s)) \mid \xi - \pi(s)\xi} = \sum_s \mu(s) \pgen{ j_p(b(s)) - \pi(s)^* j_p(b(s)) \mid \xi},
\]
where $\pi(\cdot)^*$ is the adjoint representation on $\BB^*$. 
If one knows that $- \pi(s)^* j_p(b(s)) = j_p( b(s^{-1}))$, then the proof is over: for any $\xi \in \BB$, one has
\[
2 \sum_s \mu(s) \pgen{ j_p(b(s)) \mid \xi} =0.
\]
To prove this claim, note that $j_p( b(s^{-1}) ) = j_p ( - \pi(s^{-1}) b(s) ) = - j_p (\pi(s^{-1}) b(s) )$. So it remains to prove that $j_p (\pi(s^{-1}) b(s) ) = \pi(s)^* j_p(b(s))$. To do so it suffices to check that the two defining properties of $j_p$ are satisfied: 
\begin{enumerate} \renewcommand{\itemsep}{-1ex}
 \item $\pgen{ j_p(\eta) \mid \eta } = \|\eta\|^p_\BB$ 
 \item and $\| j_p(\eta) \|_{\BB^*} = \|\eta\|^{p-1}_\BB$.
\end{enumerate}
For 1:
$
\pgen{ \pi(s)^* j_p(b(s)) \mid \pi(s^{-1}) b(s) } = \pgen{  j_p(b(s)) \mid b(s) } = \| b(s) \|_\BB^p = \| \pi(s^{-1}) b(s) \|_\BB^p.
$ \\
As for 2:
$
\|\pi(s)^* j_p(b(s)) \|_{\BB^*} 
= \|j_p(b(s)) \|_{\BB^*}
= \| b(s) \|_{\BB}^{p-1}
= \| \pi(s^{-1}) b(s) \|_{\BB}^{p-1}.
$
\end{proof}
For a $\L^p$-representation and $\mu$ a uniform measure on $S$, the $(j_p,\mu)$-harmonic cocycles are called $p$-harmonic cocycles.

%----------------------------------
\subsection{Virtual coboundaries}\label{ssvirtcob}
%----------------------------------

Although the virtual coboundaries will only pop-up in the present work for the left-regular representation (on $\ell^pG$), it is a fairly common method to make a cocycle with some desirable properties, see \eg Fernos \& Valette \cite{FV} or \cite{Go-comp}. As such it is pertinent to ponder on how general this method is. In this subsection, it will be shown that this a general phenomena at least for mildly mixing for $L^p$-representations (or more generally, those with finite stabilisers).

In order to speak of virtual coboundary, the Banach (or Hilbert) space $\BB$ has to lie in a vector space $W$ and the unitary representation $\pi$ has to extend to a linear group action on $W$. 
A cocycle is then a virtual coboundary if $b(g) = \pi(g) x - x$ for some $x \in W \setminus \BB$.

In this section, only $\L^p$-representations will be considered. 
(One could also consider $c_0(X,\mu)$ the closure under $L^\infty$ of the compactly supported functions.)
The natural choice for the space $W$ is simply the whole set of functions on $X$, \ie $W = \{ f: X \to \mathbb{C}\}$.

In order to give simple conditions which allow to realise $b$ as a virtual coboundary, it is useful to consider to think of the Schreier graph of the action for some generating set. 
The vertices of the [left-]Schreier graphs are the elements of $X$ and $y \sim x$ if there is a $s \in S$ so that $y = s^{-1}x$. 
For the left-regular representation, the [left-]Schreier graph is the [left-]Cayley graph.

Denote $b(g;x)$ for the function $b(g) \in \L^p_0(X,\mu)$ evaluated at $x$.
\begin{lem} \label{tvirtcbndr-l}
Assume $b$ is a cocycle for $\pi$ the permutation representation on $\L^p_0(X,\mu)$. There is a $f:X \to \mathbb{C}$ such that $b(g) = \pi(g) f - f$ if and only if, there is a $x_0$ in each orbit (equiv. for all $x_0$) so that $b(k;x_0) =0$ for all $k \in \Stab(x_0)$. 
\end{lem}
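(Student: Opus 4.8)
The plan is to establish the equivalence by treating the two directions separately: the ``only if'' direction is immediate, while for the ``if'' direction I would build $f$ by hand, one $G$-orbit at a time, the sole delicate point being that the construction does not depend on the choices made.

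For the ``only if'' direction, suppose $f:X \to \cc$ satisfies $b(g) = \pi(g)f - f$, i.e.\ $b(g;x) = f(g^{-1}x) - f(x)$ for all $g \in G$ and $x \in X$ (with the convention $(\pi(g)f)(x) = f(g^{-1}x)$). Then for any $x_0 \in X$ and any $k \in \Stab(x_0)$ we have $k^{-1}x_0 = x_0$, so $b(k;x_0) = f(x_0) - f(x_0) = 0$. This yields the condition at \emph{every} $x_0$ simultaneously, which already accounts for the parenthetical ``equiv.\ for all $x_0$''.

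For the ``if'' direction, fix a representative $x_0$ in each $G$-orbit with $b(k;x_0) = 0$ for all $k \in \Stab(x_0)$; for $x$ in the orbit of $x_0$ choose $t_x \in G$ with $t_x x_0 = x$ and set $f(x) := b(t_x^{-1};x_0)$. I would first check that $f$ is well defined: if $t_x x_0 = t_x' x_0$, then $k := t_x^{-1} t_x' \in \Stab(x_0)$, and feeding $(t_x')^{-1} = k^{-1}t_x^{-1}$ into the cocycle relation and evaluating at $x_0$ — where the $\pi(k^{-1})$-twist disappears because $k x_0 = x_0$ — gives $b((t_x')^{-1};x_0) = b(k^{-1};x_0) + b(t_x^{-1};x_0)$, and the first term vanishes since $k^{-1}\in\Stab(x_0)$. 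Then I would verify $b(g) = \pi(g)f - f$: for $g \in G$ and $x$ in the orbit, set $y := g^{-1}x$ and note $t_y := g^{-1}t_x$ sends $x_0$ to $y$, so $f(y) = b(t_x^{-1}g;x_0)$; expanding $b(t_x^{-1}g)$ by the cocycle relation and evaluating at $x_0$ gives $f(y) = b(t_x^{-1};x_0) + b(g; t_x x_0) = f(x) + b(g;x)$, which rearranges to $b(g;x) = f(g^{-1}x) - f(x)$, as wanted.

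Finally the chain $[\exists f] \imp [b(k;x_0)=0 \ \forall x_0,\, \forall k \in \Stab(x_0)] \imp [\text{the same for one } x_0 \text{ per orbit}] \imp [\exists f]$ closes the loop and shows the two stated formulations are equivalent. I expect no genuine obstacle here: the only substantive step is the well-definedness of $f$, which is precisely where the hypothesis $b(k;x_0) = 0$ on $\Stab(x_0)$ is consumed. It is worth stressing that $f$ is allowed to be an arbitrary function $X \to \cc$ — no measurability or integrability is imposed — so in general $f \notin \L^p_0(X,\mu)$, which is exactly the point of calling $b$ a \emph{virtual} coboundary.
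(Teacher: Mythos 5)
Your proof is correct, and it reaches the same destination as the paper by a noticeably cleaner route. The paper's argument is graph-theoretic: it first ``integrates'' the prescribed gradient $b(s;\cdot)$, $s \in S$, along the left-Schreier graph to produce $f$ up to a constant on each orbit, and then separately verifies consistency at an arbitrary point $x = gx_0$ of the orbit by expanding $b(gkg^{-1}) = [1-\pi(gkg^{-1})]b(g) + \pi(g)b(k)$ and invoking the hypothesis at $x_0$. Your transversal construction $f(x) := b(t_x^{-1};x_0)$ folds these two steps into one: the choice of $t_x$ plays the r\^ole of the choice of path from $x_0$ to $x$, and the well-definedness computation (where the hypothesis $b(k;x_0)=0$ on $\Stab(x_0)$ is consumed) is exactly the statement that the integral is path-independent. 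What your version buys is that it never needs a generating set or the Schreier graph, it only ever invokes the hypothesis at the basepoint $x_0$ rather than at every point of the orbit, and every step is an explicit application of the cocycle relation; what the paper's version buys is the geometric picture (gradient on the Schreier graph) that it reuses immediately afterwards in Remark \ref{rvirtcobharm} to identify harmonicity of $b$ with harmonicity of $f$. Your closing chain of implications also cleanly disposes of the ``one $x_0$ per orbit versus all $x_0$'' equivalence, which the paper leaves implicit.
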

\begin{proof}
The $(\Rightarrow)$ direction is straightforward so we will only deal with the $(\Leftarrow)$.
The cocycle relation implies that $b(g;x)$ is completely determined by $b(s;x)$ for $s \in S$ (by writing $g$ as a word). 

Hence a first step is to find a $f$ such that $b(s;x) = \pi(s) f(x) - f(x)$ for any $s \in S$. This turns out to be equivalent to fixing the gradient of $f$ on the [left-]Schreier graph. The fact that $b(e) \equiv 0$ and the cocycle relation imply that $\sum_{\vec{e} \in C} \nabla f(\vec{e}) =0$ (where $C$ is a [oriented] cycle) so that the gradient can be ``integrated'' into a $f:X \to \mathbb{C}$. $f$ is uniquely determined on the [left-]Schreier graph up to a constant on each $G$-orbit. 

The next step is to make sure that there are no problems coming from the stabilisers, namely that $b(w;x) =0$ if $w \in \Stab(x)$ (because $\pi(w) f(x) - f(x) =0$). 
Write $w = g k g^{-1}$ where $x = gx_0$ and $k \in \Stab(x_0)$. Then
\[
\begin{array}{rll}
b(w) &= b(gkg^{-1}) 
     &= b(g) + \pi(g) b(kg^{-1}) \\
     & = b(g) + \pi(g) b(k) + \pi(gk) b(g^{-1}) 
     &= [1-\pi(gkg^{-1})] b(g) + \pi(g)b(k)
\end{array}
\]
where we used $b(g^{-1}) = -\pi(g^{-1})b(g)$ in the last inequality. Notice that $[1-\pi(gkg^{-1})] h (x) =0$ for any function $h:X \to \mathbb{R}$ (because $gkg^{-1}=w$ fixes $x$). Also $\pi(g)b(k;x) = b(k;g^{-1}x) = b(k;x_0) = 0$. Hence, we have $b(w;x) =0$ as desired.
\end{proof}
\begin{rmk}\label{rvirtcobharm}
Note that if $b$ is harmonic, then $f$ is harmonic as a function on the [left-]Schreier graph, \emph{i.e.} $\forall x \in X, \sum_{s \in S} f(s^{-1}x) -f(x) = 0$.

Indeed, the condition $\forall x, \sum_{s \in S} b(s;x) =0$ is equivalent to $\sum_{s \in S} (f(s^{-1}x)-f(x) ) =0$ (so the two notions of harmonicity coincide).

The same holds for ``$p$-harmonic'' instead of harmonic.
\end{rmk}
Then $K$ is a compact subgroup of $G$, it is always possible to add a coboundary $z_\xi(\cdot) := \xi - \pi(\cdot) \xi$ to a cocycle $b$ in order to have $b_{|K} \equiv 0$. The coboundary is given by $\xi = \int_K b(k)$. If $b$ vanishes identically on $K$, one even has that $f$ is constant on the $K$-orbits: $\forall x$ one has $0= b(k;x) = f(k^{-1}x) - f(x)$.

The following lemma is however better suited to our purposes.
\begin{lem}\label{tstabcoc-l}
Assume $G \acts X$ and $K = \Stab(x_0)$. If $b$ is a cocycle for the permutation representation $\pi$, then $k \mapsto b(k;x_0)$ is a homomorphism $K \to \L^p(X,\mu)$. 
\end{lem}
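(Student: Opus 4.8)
The plan is to verify directly that the restriction $k \mapsto b(k; x_0)$ kills the "twisting" in the cocycle relation, so that it becomes a plain additive homomorphism into the abelian group $\L^p(X,\mu)$. The key observation is that for $k \in K = \Stab(x_0)$ the element $\pi(k)$ fixes the point $x_0$, so evaluating any function at $x_0$ after applying $\pi(k)$ gives back the value at $x_0$: more precisely, for any $h : X \to \cc$ one has $(\pi(k)h)(x_0) = h(k^{-1}x_0) = h(x_0)$. First I would record this elementary fact.

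Next I would apply the cocycle relation to a product of two stabiliser elements. For $k_1, k_2 \in K$ the relation $b(k_1 k_2) = b(k_1) + \pi(k_1) b(k_2)$, evaluated at $x_0$, becomes
\[
b(k_1 k_2; x_0) = b(k_1; x_0) + \big(\pi(k_1) b(k_2)\big)(x_0) = b(k_1; x_0) + b(k_2; x_0),
\]
using the fixed-point observation with $h = b(k_2)$. Since $k_1 k_2 \in K$ as well (it is a subgroup), this says exactly that $\phi : K \to \L^p(X,\mu)$, $\phi(k) = b(k; x_0)$, satisfies $\phi(k_1 k_2) = \phi(k_1) + \phi(k_2)$, i.e.\ it is a group homomorphism into the additive group of $\L^p(X,\mu)$. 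For completeness I would also note $\phi(e) = b(e; x_0) = 0$ since $b(e) = 0$ in $\L^p$, which is anyway forced by the homomorphism property.

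There is essentially no obstacle here; the only point to be slightly careful about is the bookkeeping of left versus right actions and the direction of the twist (whether the fixed point absorbs $\pi(k_1)$ or $\pi(k_2)$), but with the convention $\gm f(x) = f(\gm^{-1}x)$ fixed in \S\ref{ssmixmean} both orderings work since $k_1, k_2$ and their inverses all lie in $K$ and fix $x_0$. One should also observe that the map genuinely lands in $\L^p(X,\mu)$ (not merely in the space of all functions), which is immediate because each $b(k)$ already lies in $\L^p_0(X,\mu) \subset \L^p(X,\mu)$ by hypothesis, so its value at a single point $x_0$ is a well-defined complex number and the map $k \mapsto b(k;x_0)$ is a map into $\cc$; if instead one wants the homomorphism into $\L^p(X,\mu)$ itself, one takes $k \mapsto b(k) \in \L^p_0(X,\mu)$ restricted to $K$ and the same computation shows the twist disappears upon the implicit identification, but the cleanest reading is the scalar-valued one.
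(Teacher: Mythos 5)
Your argument is correct and is essentially the paper's own proof: both apply the cocycle relation to a product of two stabiliser elements and observe that, since $k^{-1}$ fixes $x_0$, evaluating the twisted term $\pi(k_1)b(k_2)$ at $x_0$ returns $b(k_2;x_0)$, so the map is additive. Your closing remark that the target is really the scalars $\cc$ rather than $\L^p(X,\mu)$ is also a fair reading of the statement and does not affect the argument.
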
 
In particular, if $K$  is compact, then $b(k;x_0)=0$ for all $k \in K$.
\begin{proof}
By the cocycle relation
\[
\begin{array}{ll}
b(k^{-1}k';x_0) 
&= b(k^{-1};x_0) + \pi(k^{-1}) b(k';x_0) \\
&= b(k^{-1};x_0) + b(k';k x_0) \\
&= b(k^{-1};x_0) + b(k';x_0). \\
\end{array}
\]
\end{proof}
In the following theorem, the author benefited from the help of A.~Carderi to deal with the case of a generic measure space $(X,\mu)$. 
\begin{teo}\label{tvirtcoc-t}
Assume $G \acts X$ countable and the action is mildly mixing, then any cocycle is a virtual coboundary.

If $G \acts (X,\mu)$ and the action is mildly mixing (as in \ref{defmixact}), then, on a full measure subset $X' \subset X$, $b$ is a virtual coboundary.
\end{teo}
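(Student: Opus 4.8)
The plan is to reduce the statement to Lemma \ref{tvirtcbndr-l}: a cocycle $b$ is a virtual coboundary precisely when, for one (equivalently every) basepoint $x_0$ in each orbit, $b(k;x_0)=0$ for all $k\in\Stab(x_0)$. So in both the countable and the measured case the task is to show that the ``stabiliser obstruction'' vanishes (after possibly discarding a null set). First I would treat the countable case. Fix an orbit and a basepoint $x_0$, and let $K=\Stab(x_0)$. By Lemma \ref{tstabcoc-l}, $k\mapsto b(k;x_0)$ is a homomorphism $K\to\L^p(X,\mu)$; since the target is torsion-free (as an abelian group, a Banach space), this homomorphism kills every torsion element, so if $K$ is finite we are immediately done. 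The point of mild mixing is exactly that in a countable mildly mixing action all stabilisers are finite (this is recorded in the excerpt right after Definition \ref{defmix}: ``mildly mixing [equivalently, strongly mixing] if there are no infinite stabilisers''). Hence $K$ is finite for every $x_0$, the hypothesis of Lemma \ref{tvirtcbndr-l} is met, and $b$ is a virtual coboundary.

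For the measured case the subtlety is that there is no literal ``orbit basepoint'' to evaluate at and stabilisers need not be honestly finite, only ``mildly mixing'' in the sense of Definition \ref{defmixact}. The plan is to pass to the associated permutation-type $\L^p$-representation and run an argument parallel to Lemma \ref{tvirtcbndr-l} working with $\mu$-measurable functions and equalities up to null sets. Concretely: the cocycle relation lets one ``integrate the gradient'' of $b$ along the Schreier structure of the action to produce a measurable $f:X\to\cc$, defined up to an additive constant on each ergodic component, with $b(s)=\pi(s)f-f$ in $\L^p_0$ for each $s\in S$ — here one uses that the action is measure preserving so the integration is consistent a.e. The remaining issue is the stabiliser obstruction: one must show that for a.e.\ $x$ and for every $w$ with $wx=x$ one has $b(w;x)=0$. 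Using the computation in the proof of Lemma \ref{tvirtcbndr-l}, $b(w)=[1-\pi(w)]b(g)+\pi(g)b(k)$ when $w=gkg^{-1}$, $x=gx_0$, $k\in\Stab(x_0)$; the first term vanishes at $x$ because $w$ fixes $x$, so the obstruction reduces to $b(k;x_0)$ with $k$ in a stabiliser. By Lemma \ref{tstabcoc-l} this is again a homomorphism from a stabiliser into a torsion-free group, so it vanishes on all torsion elements; mild mixing (Definition \ref{defmixact}.2) forces the stabiliser of a.e.\ point to be a torsion group — indeed if a positive-measure set of points had a stabiliser containing an element $h$ of infinite order, one could build, using ergodic-decomposition/Rokhlin-tower machinery on the infinite cyclic subgroup generated by $h$, a set $A$ with $0<\mu(A)<\mu(X)$ and $\mu(h^nA\triangle A)\to 0$, contradicting mild mixing. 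Intersecting the relevant full-measure sets gives $X'$.

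I expect the main obstacle to be exactly the last point in the measured case: turning ``mildly mixing'' (a statement about asymmetric difference of sets) into ``a.e.\ stabiliser is torsion'' and making the a.e.-defined integration $f$ genuinely measurable and compatible across the action, i.e.\ controlling the null sets on which the various pointwise identities fail so that a single conull $X'$ works. The countable case is routine once one invokes the stated equivalence ``mildly mixing $\Leftrightarrow$ no infinite stabilisers''; the torsion-freeness of $\L^p$ as an abelian group does the rest. (This is precisely the place where A.~Carderi's contribution, acknowledged above, enters: extending the argument from the countable setting to a general $(X,\mu)$ with a measure-preserving action.)
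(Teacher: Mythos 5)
Your proposal matches the paper's proof in both cases: reduce to Lemma \ref{tvirtcbndr-l}, use Lemma \ref{tstabcoc-l} (plus torsion-freeness of $\L^p$ as an abelian group) to see the stabiliser obstruction can only come from infinite-order elements, let mild mixing kill those (finite stabilisers in the countable case, null fixed-point sets in the measured case), and obtain $X'$ as a countable intersection of conull sets, made $G$-invariant by further intersecting over $G$-translates. The one simplification worth noting is that no Rokhlin-tower or ergodic-decomposition machinery is needed at the last step: the fixed-point set of an infinite-order $g$ is itself $g$-invariant, so mild mixing forces it to be null directly --- this is exactly the paper's set $F_g$.
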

Actually, it is sufficient to assume that the representation has finite stabilisers (\ie if $A \subset X$ is such that $\mu(A) >0$ and $\mu(A^\comp) >0$ then there are only finitely many $g \in G$ with $g A = A$).
\begin{proof}
Let us start with the case where $X$ is countable. Assume $b$ is not a virtual coboundary. By Lemma \ref{tvirtcbndr-l}, there is a stabiliser $K = \Stab(x_0)$ so that $b(k;x_0) \neq 0$. By Lemma  \ref{tstabcoc-l}, $K$ must be infinite so the action is not mildly mixing.

In the case of the action on $(X,\mu)$, let $E = \{g \in G \mid b(g;x_0) \neq 0$ and $g \in \Stab(x_0)\}$. 
If this set is empty, Lemma \ref{tvirtcbndr-l} can be used to conclude directly. 
By Lemma \ref{tstabcoc-l}, any element of $E$ has an infinite order. 
Let $F_g = \cup \{ A \subsetneq X \mid g \cdot A = A \text{ and } \mu(A^\comp) >0 \}$ be the ``largest'' fixed set (it contains at least one point). 
Then, for $g \in E$,  $\mu(F_g) =0$ (otherwise there is a $A$ with $\mu(A)>0$ and $\mu(A^\comp)>0$ so that $g^n A = A$ for all $n$, contradicting mildly mixing). 
Let $\tilde{X} = \cap_{g \in E} (X \setminus F_g)$. 
Since $E \subset G$ is countable, $\tilde{X}$ has full measure. 
It could very well happen that $\tilde{X}$ is not $G$-invariant, so one needs to consider $X' = \cap_{g \in G} g \cdot \tilde{X}$.

By construction, $X'$ has full measure and $b_{|X'}$ is a virtual coboundary by Lemma \ref{tvirtcbndr-l}.
\end{proof}

It seems natural to introduce the space of $p$-Dirichlet functions associated to the representation. These function have been of great use to study the cohomology of the left-regular representation, see \cite{Go}, Martin \& Valette \cite[\S{}3]{MV}, Puls \cite{Puls-Can,Puls-harm} or \S{}\ref{slpcoh} below.
For $S_G$ some finite generating set of $G$, let 
\[
\D^p(G,\pi) = \{ f : X \to \cc \mid \forall s \in S_G , \pi(s) f - f \in \L^p(X,\mu)\}.
\]
By the previous subsection, the space $\D^p$ (modulo constants on the $G$-orbits) is the space of cocycles. 
Though the above definition depends on $S$, the important properties (\ie those related to cohomology) obviously do not.
Note that an element $f$ of $\D^p(G,\pi)$ might not be a measurable function, but $\pi(g)f - f$ always is.

\begin{rmk}
It is easy to describe the action introduced in Remark \ref{ractnorm} in terms of the virtual coboundary. If $K \lhd G$, then $G$ acts on elements $f \in \D^p(K,\pi)$ by $g \cdot f = \pi(g)f$.
\end{rmk}

\begin{rmk}
Given a cocycle for a $L^2$-representation $\pi$, there are actually two families of harmonic functions one can associate to it. 
The first, coming from projections (see Lemma \ref{tgradcond-l}), are ``naturally'' on the right-Cayley graph and the second, coming from virtual coboundaries (see Remark \ref{rvirtcobharm}), are ``naturally'' on the left-Schreier graph. 

For the left-regular representation, the Schreier graph is just the Cayley graph.
The relation between those left- and right-harmonic functions are easily seen for any $\eta \in \ell^2G$ of finite support:
\[
h_\eta(g) = \pgen{ b(g) \mid \eta } = \pgen{ \pi(g) f - f \mid \eta }  = \pgen{ f \mid \pi(g)^{-1} \eta - \eta}.
\]
For example, when $\eta = \delta_x$ is a Dirac mass, $h_{\delta_x}(g) = f(g^{-1}x)-f(x)$. 
Note that the constant in the definition of $f$ does not matter, so one may set $f(x)=0$. 
Also, one sees that the choice of $x$ only translates the function $h$ (by an automorphism of the right-Cayley graph), so one could set $x=e$.
Then $h$ and $f$ are actually obtained by the change of variable $g \mapsto g^{-1}$, \ie the most obvious way of passing from right- to left-Cayley graphs.
\end{rmk}

%----------------------------------------------------------
\subsection{Virtual coboundary for groups acting on trees}\label{ssvirtcobtree}
%---------------------------------------------------------

There are clearly cases outside those described in the previous subsection where virtual coboundaries are useful. 
In this subsection, the setting is that of \cite{GJ}; as such the groups may not be finitely generated. 
These are groups acting on trees by automorphism.
However, there is a compact subgroup (the stabiliser of some ``root'' vertex $x_0$) that has only countably many co-sets.
This compact group together with a finite number of elements generate the group.

The action of interest for such group is the action on the space $\ell^2_{alt}(E)$ of $\ell^2$-alternating functions on oriented edges (\ie $f(\vec{e}) = -f(\vec{e}^*)$ where $\vec{e}^*$ is the edge $\vec{e}$ with reversed orientation). 
The space $\ell^2_{alt}(E)$ is obviously contained in the space of all alternated functions, playing the r\^ole $W$ above.
Note that in this context, there are no constant functions in $W$!

Let us now describe a virtual coboundary for the Haagerup cocycle (see \cite{GJ} for further background and see \cite[\S{}3]{FV} for another possible choice). 
A simple computations shows that $f$ may be defined as follows:
\[
f(\vec{e}) = \left\{\begin{array}{ll}
+1/2 & \text{if following } \vec{e} \text{ increases the distance to } x_0\\
-1/2 & \text{if following } \vec{e} \text{ decreases the distance to } x_0\\
\end{array} \right.
\]
In order to make it harmonic, one looks first at its divergence: it is $\tfrac{q+1}{2}$ at $x_0$ and $\tfrac{q-1}{2}$ at every other vertex. 
Recall that each [non-trivial] instance $b(g) = \pi(g) f- f$ of the cocycle is a function with non-trivial divergence only at two vertices $x_0$ and $gx_0$.

The harmonic cocycle (in the same class as the Haagerup cocycle) is obtained from the Haagerup cocycle by adding to it a bounded cocycle (hence a coboundary): $\tilde{b}(g) = Q \chi_{x_0 \to g x_0}$ where $Q= \nabla G \nabla^*$ and $G$ is Green's kernel. The cocycle $\tilde{b}$ is the image under the boundary map of $\tilde{f}: \mathbb{E} \to \mathbb{C}$, the function defined as the gradient of $G$. 

It is then apparent that the harmonic cocycle $b'$ is also a virtual coboundary: $b'(g)= \pi(g) f' - f'$ with $f' = f+\tilde{f}$. 
Note that the function $f'$ has divergence $\tfrac{q-1}{2}$ at every vertex, so that $b'(g) = \pi(g) f'- f'$ lies indeed in the kernel of the divergence.

On a tree all these [alternated] function on the edges can be integrated as functions on the vertices. 
For example, $f$ is the gradient of the function $x \mapsto \tfrac{1}{2} d(x,x_0)$. 
Note that $f'$ being of constant divergence and spherical, its integral will have constant Laplacian and be spherical. 
In particular, this means it will satisfy the recurrence relation described in \cite[\S{}3.1]{GJ} and will be easily computed.

%----------------------------------
%----------------------------------
\section{Centres and vanishing}\label{scen}
%----------------------------------
%----------------------------------

In this section, only unitary representations are considered. The aim is to try to deduce vanishing of reduced cohomology by exploiting some specific properties of harmonic cocycles. This property is sometimes extremely useful for this purpose (see Ozawa's recent proof of Gromov's polynomial growth theorem  \cite{Oza}). The proofs of Lemma \ref{tergconj-l} and Lemma \ref{twmconj-l} are inspired from \cite[Theorem 3.2]{Go-trans} and \cite[Lemma 2.7]{GJ}.

Recall that for some set $S \subset G$,
\[
\begin{array}{rlll}
           & Z_G(S)      &= \{ g \in G \mid \forall h \in S, h^{-1} g h = g \}          &=\text{ the centraliser of } S \\
\text{and} & Z^{FC}_G(S) &= \{ g \in G \mid \{h^{-1}gh \}_{h \in S} \text{ is finite}\} &=\text{ the $FC$-centraliser of } S.
 \end{array}
\]
The centre of $G$ is $Z(G):= Z_G(G)$ and the FC-centre of $G$ is $Z^{FC}(G) := Z^{FC}_G(G)$. 
Much like the FC-centre is not very meaningful in finite groups, the FC-centraliser of a finite set is also the whole group.
Note that having an infinite FC-centre is not an invariant of quasi-isometry (see Cornulier \cite[Remark 2.14]{Cor-comm}).

\begin{rmk}
For an infinite subgroup $K< \Gm$,
note that the FC-centraliser $Z^{FC}_\Gm(K)$ is contained in the q-normaliser $N^q_\Gm(K)$. 
Indeed, given $g \in Z^{FC}_\Gm(K)$, $K$ acts on $\{ k g k^{-1}\}_{k \in K}$ by conjugation (and there is, by construction, only one orbit). 
Since this set is finite and $K$ is infinite, every orbit has an infinite stabiliser. 
So there are infinitely many $k \in K$ commuting with $g$ and $gKg^{-1} \cap K$ is infinite.

Hence, if $K<H<\Gm$ (and $K$ is an infinite subgroup), then $Z^{FC}_\Gm(H) < Z^{FC}_\Gm(K) < N^q_\Gm(K) < N^q_\Gm(H)$.
\end{rmk}

%----------------------------------
\subsection{FC-centre and kernel}\label{sscentraliser}
%----------------------------------

$G$ is called $\mu$-Liouville if there are no bounded $\mu$-harmonic functions on $G$, \ie there are no $f \in \ell^\infty G$ so that $\sum_{s \in S} \mu(s) f(s^{-1}x) = f(x)$. 
\begin{lem}\label{tergconj-l}
Assume $\pi$ is an ergodic unitary representation of the finitely generated group $G$, $b$ is a $\mu$-harmonic cocycle, $G$ is $\mu$-Liouville and $G$ has a finite conjugacy class $C$. Then 
\[
\frac{1}{|C|}\sum_{c \in C} b(cg) = b(g). \qedhere
\]
\end{lem}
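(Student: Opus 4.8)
The plan is to exploit the key structural fact about $\mu$-harmonic cocycles $b$ taking values in a unitary (hence uniformly bounded) representation $\pi$: the function $g \mapsto \|b(g)\|^2$ grows at most linearly, so that $g \mapsto b(g)$ has at most linear growth, and moreover, for \emph{ergodic} $\pi$, certain averaging functions built from $b$ are harmonic and bounded, hence constant by the $\mu$-Liouville hypothesis. Concretely, for a fixed finite conjugacy class $C$ of $G$, consider the vector-valued function $F(g) := \frac{1}{|C|}\sum_{c \in C} b(cg) - b(g) \in \ms{H}$. First I would check that $F$ is $\mu$-harmonic as an $\ms{H}$-valued function on the right-Cayley graph: writing $b(cg) = b(c) + \pi(c)b(g)$ via the cocycle relation, one gets $F(g) = \frac{1}{|C|}\sum_{c\in C} b(c) + \big(\frac{1}{|C|}\sum_{c\in C}\pi(c) - 1\big)b(g)$, and since $\sum_s \mu(s)(b(gs) - b(g)) = \pi(g)\sum_s \mu(s) b(s) = 0$ by harmonicity of $b$, the same computation (applied coordinate-wise, or rather: $F(g)$ is an affine-linear expression in $b(g)$ composed with the harmonic $b$) shows $\sum_s \mu(s)(F(gs) - F(g)) = 0$.

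The second step is to show $F$ is bounded. Pairing with an arbitrary $\eta \in \ms{H}$, the scalar function $h(g) := \pgen{F(g)\mid\eta}$ is $\mu$-harmonic (by Lemma~\ref{tgradcond-l}, or directly from the previous step). The content is that $h$ is bounded. Here I would invoke ergodicity of $\pi$ together with the finiteness of $C$: the operator $T := \frac{1}{|C|}\sum_{c\in C}\pi(c)$ is a contraction on $\ms{H}$, and I want to relate $(T-1)b(g)$ to a bounded quantity. The cleanest route is to observe that since $C$ is a single conjugacy class, for $g$ fixed the set $\{gcg^{-1} : c \in C\} = C$, so using \eqref{eqcocconj} one can rewrite $\sum_{c\in C} b(cg)$ and extract that $F(g)$ is, up to the constant vector $\frac{1}{|C|}\sum_c b(c)$, of the form $(T-1)b(g)$ where the "twist" by $g$ interacts with $C$ through conjugation; ergodicity (no invariant vectors) makes $T - 1$ have dense image but more to the point makes the relevant averages of coefficients vanish, forcing $\|F(g)\|$ to stay bounded uniformly in $g$. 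I would then conclude: a bounded $\mu$-harmonic function on a $\mu$-Liouville group is constant, so $h$ is constant, hence (as $\eta$ is arbitrary) $F$ is a constant vector $v \in \ms{H}$; evaluating at $g = e$ gives $v = F(e) = \frac{1}{|C|}\sum_{c\in C} b(c) - b(e) = \frac{1}{|C|}\sum_{c\in C}b(c)$, and I would need a final argument (again using ergodicity/no invariant vectors, applied to the fixed vector $v$ which must be $\pi$-related to itself under averaging over the conjugacy-closed $C$) to see $v = 0$, which yields the claimed identity $\frac{1}{|C|}\sum_{c\in C} b(cg) = b(g)$.

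The main obstacle I anticipate is the boundedness step: showing $\sup_g \|F(g)\| < \infty$. The naive bound $\|b(cg)\| \le \|b(c)\| + \|b(g)\|$ only gives $\|F(g)\| \le \text{const} + 2\|b(g)\|$, which is useless since $\|b(g)\|$ can grow. The trick must be that the $b(g)$ terms genuinely cancel: $\frac{1}{|C|}\sum_{c\in C}\pi(c)b(g)$ should be compared not to $\|b(g)\|$ but, after the conjugation rewriting, to a convex combination of $\pi(\text{something})$ applied to a \emph{bounded} family. I expect the right identity comes from writing $c g = g (g^{-1} c g)$, noting $\{g^{-1}cg : c\in C\} = C$ since $C$ is conjugation-invariant, so $\sum_{c\in C} b(cg) = \sum_{c\in C} b(g \cdot g^{-1}cg) = |C|\, b(g) + \pi(g)\sum_{c'\in C} b(c')$, giving immediately $F(g) = \frac{1}{|C|}\pi(g)\sum_{c\in C} b(c)$ — this is bounded by $\frac{1}{|C|}\sum_{c\in C}\|b(c)\|$ since $\pi$ is unitary, and it is $\mu$-harmonic in $g$ because it is $\pi(g)$ applied to a fixed vector and the coefficient functions of a harmonic setup average out (or directly: $\sum_s\mu(s)(\pi(gs) - \pi(g))w = \pi(g)(\sum_s\mu(s)\pi(s) - 1)w$, and one checks this fixed vector $w = \frac1{|C|}\sum_c b(c)$ is $\mu$-stationary for $\pi$, equivalently invariant, which by ergodicity forces $w = 0$ unless... ). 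Thus the Liouville hypothesis combined with ergodicity kills $F$ entirely, and the main work is just assembling these elementary cocycle manipulations carefully and justifying the "$\mu$-harmonic and bounded $\Rightarrow$ constant, then ergodic $\Rightarrow$ zero" endgame.
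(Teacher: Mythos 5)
Your final paragraph contains a complete and correct proof, and it takes a genuinely different --- and more elementary --- route than the paper. The decisive point is the identity $b(cg)=b(g\cdot g^{-1}cg)=b(g)+\pi(g)b(g^{-1}cg)$ which, summed over the conjugation-invariant finite set $C$, gives
\[
\frac{1}{|C|}\sum_{c \in C} b(cg) \;-\; b(g) \;=\; \pi(g)\,w, \qquad w:=\frac{1}{|C|}\sum_{c\in C}b(c).
\]
Each function $g\mapsto b(cg)$ is harmonic on the right-Cayley graph (since $\sum_s \mu(s)\big(b(cgs)-b(cg)\big)=\pi(cg)\sum_s\mu(s)b(s)=0$), so the left-hand side is harmonic; evaluating the harmonicity of $g\mapsto\pi(g)w$ at $g=e$ yields $\sum_s\mu(s)\pi(s)w=w$, and by strict convexity of the Hilbert norm (the same classical trick $\|\xi-\pi(s)\xi\|^2=2\|\xi\|^2-2\pgen{\pi(s)\xi\mid\xi}$ used elsewhere in this section) this forces $\pi(s)w=w$ for every $s$ in the support of $\mu$, hence $w$ is $\pi(G)$-invariant, hence $w=0$ by ergodicity, which is exactly the claim. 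The paper instead pairs $b$ with a vector $\eta$, rewrites $\frac{1}{|C|}\sum_c h_\eta(cg)-h_\eta(g)$ as a transport-cost pairing $\pgen{\nabla h_\eta\mid \tau_n}$ against the random-walk distributions $P^n_g$, lets $n\to\infty$ so that $P^n_g$ converges to an invariant mean (this is where the $\mu$-Liouville hypothesis enters, via Kaimanovich--Vershik), and concludes from the vanishing of the mean of coefficients of ergodic representations on WAP. Note that your route never uses the Liouville hypothesis: it needs only $\sum_s\mu(s)b(s)=0$, unitarity and the absence of invariant vectors, so it proves the statement under strictly weaker assumptions and in particular subsumes the weakly mixing variant (Lemma \ref{twmconj-l}) as well. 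The one criticism is of presentation rather than substance: your first two paragraphs pursue a ``bounded harmonic implies constant'' strategy containing unjustified steps (``ergodicity\ldots forcing $\|F(g)\|$ to stay bounded'' is not an argument), all of which is superseded by, and should simply be replaced with, the clean computation at the end.
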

\begin{proof}
Define a transport pattern (see also \cite[Definition 3.2]{Go} or \cite [Definition 4.1]{Go-trans}) from $\phi$ to $\xi$ (two finitely supported measures) to be a finitely supported function on the edges $\tau$ so that $\nabla^* \tau = \xi - \phi$. 
The proof consists in showing the equality for any $h(\cdot)$ which is $\mu$-harmonic and has ``$0$-mean''-gradient. It will then follow for $b$, since for any $\eta$, $h(\cdot) := h_\eta(\cdot) = \pgen{b(\cdot ) \mid \eta}$ such a function (see Definition \ref{defmix} and Lemma \ref{tgradcond-l}). The proof takes place on the right-Cayley graph.

If $h$ is $\mu$-harmonic, then $\langle h \mid P^n_g \rangle = h(g)$ where $\langle h \mid f \rangle = \sum_{x \in X} h(x)f(x)$ (if at least one of $h$ or $f$ has finite support) and $P^n_g$ is the distribution of the random walk driven by $\mu$ starting at $g$ at time $n$. Hence, if $C$ is a finite conjugacy class,
\[
\frac{1}{|C|} \sum_{c \in C} h(cg) - h(g) = \Big\langle h \Big| \tfrac{1}{|C|} \sum_{c \in C} P^n_{cg} - P^n_g \Big\rangle = \langle h \mid \nabla^* \tau_n \rangle = \langle \nabla h \mid \tau_n \rangle
\]
where $\tau_n$ is the transport plan obtained by taking the mass of $P^n_g$ at $g'$, split in $|C|$ masses, and take them (along a shortest path) to $g'c$ (for $c \in C$). Each transport takes at most $K:= \max_{c \in C} |c|_S$ steps, hence $\|\tau_n\|_{\ell^1} \leq K$. Notice that $g'C = Cg'$, so that this also splits and transports the mass uniformly to $Cg'$. Let $c = s_{1,c} s_{2,c} \ldots s_{|c|,c}$, then
\[
\begin{array}{r@{\,}l}
\displaystyle \tfrac{1}{|C|} \sum_{c \in C} h(cg) - h(g)  
&= \langle \nabla h \mid \tau_n \rangle \\
&= \displaystyle  \sum_{c \in C} \sum_{i = 1}^{|c|} \sum_{g' \in G} \pgen{ \nabla h( g' s_1 s_2 \ldots s_{i-1} , g' s_1 s_2 \ldots s_i ) \mid \tfrac{1}{|C|} P^n_{g}(g') } \\
&= \displaystyle \tfrac{1}{|C|}\sum_{c \in C} \sum_{i = 1}^{|c|} \sum_{g' \in G}  \pgen{\pi( g' s_1 s_2 \ldots s_{i-1}) b(s_i )\mid \eta} \cdot  P^n_{g}(g')  \\
&= \displaystyle \tfrac{1}{|C|} \sum_{c \in C} \sum_{i = 1}^{|c|} \pgen{ \kappa_{\pi(s_1 \ldots s_{i-1}) b(s_i),\eta } \mid P^n_{g} } \\
\end{array}
\]
Note that the left-hand side does not depend on $n$. By letting $n \to \infty$, measure $P^n_g$ in the right-hand side tends to an invariant mean on $\ell^\infty G$ (see Kaimanovich \& Vershik \cite{KV}). 
On the other hand, the coefficients are those of an ergodic representation: their mean for the invariant mean on WAP functions is $0$. Since an invariant mean on $\ell^\infty G$ coincides with the unique invariant mean on WAP, each sum tend to $0$.
\end{proof}
Recall that the [closure of the] image of a cocycle is always a subrepresentation.  
\begin{lem}\label{tergtrivcent-l}
Assume that $\exists z$ so that $\forall g \in G, b(g) = b(zg)$. Then $z \in \ker \pi_{| \img b}$ and $b(z) =0$.
\end{lem}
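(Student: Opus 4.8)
The statement follows directly from the cocycle relation, so the argument is short. First I would evaluate the hypothesis $b(g) = b(zg)$ at $g = e$. Since $b(e) = 0$ (a consequence of the cocycle relation recalled after the definition), this immediately gives $b(z) = b(ze) = b(e) = 0$, which is the second assertion.

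For the first assertion, I would expand $b(zg)$ using the cocycle relation: $b(zg) = b(z) + \pi(z) b(g) = \pi(z) b(g)$, where the last equality uses $b(z) = 0$ just established. Comparing with the hypothesis $b(zg) = b(g)$, one obtains $\pi(z) b(g) = b(g)$ for every $g \in G$, i.e. $\pi(z)$ fixes every vector of $\img b$. Recall (as noted just before the statement) that the closure of the linear span of $\img b$ is a $\pi$-invariant subspace, namely the subrepresentation $\pi_{|\img b}$; since $\pi(z)$ is a bounded linear operator fixing a spanning subset of this subspace, it fixes the whole subspace by continuity and linearity. Hence $z \in \ker \pi_{|\img b}$.

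I do not expect any real obstacle here; the only point requiring a word of care is the interpretation of $\pi_{|\img b}$ as the (sub)representation carried by the closed linear span of $\img b$, so that fixing $\img b$ pointwise is equivalent to fixing that subspace pointwise. No additional hypotheses on $\pi$ (unitarity, mixing, etc.) are needed for this particular lemma, though in the intended application $b$ will be a harmonic cocycle of an ergodic unitary representation and $z$ will arise from Lemma~\ref{tergconj-l}.
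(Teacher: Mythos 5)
Your proposal is correct and follows exactly the paper's own argument: evaluate the hypothesis at $g=e$ to get $b(z)=0$, then use the cocycle relation to turn $b(zg)=b(g)$ into $\pi(z)b(g)=b(g)$ for all $g$. The extra remark about passing from $\img b$ to its closed linear span is a reasonable clarification but does not change the substance.
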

\begin{proof}
Using the hypothesis and that $b(e) =0$, one gets
\[
b(z) = b(z \cdot e) = b(e) =0
\]
But then, using the cocycle relation, one has, $\forall g$
\[
b(g) = b(zg) = \pi(z) b(g). \qedhere
\]
\end{proof}
For the record let us combine the two previous lemmas:
\begin{teo}\label{tergliou-t}
Assume $\pi$ is ergodic, $\srl{H}^1(G,\pi) \neq 0$, $G$ is finitely generated and $G$ is $\mu$-Liouville. Then, for any non-trivial $\mu$-harmonic cocycle $b$, $Z(G) \subset \ker b \cap \ker \pi_{| \img b}$.
\end{teo}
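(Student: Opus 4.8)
The plan is to combine Lemma \ref{tergconj-l} and Lemma \ref{tergtrivcent-l} via the description of reduced cohomology by harmonic cocycles. First I would recall that, by the corollary in \S{}\ref{ssharmcoc}, the assumption $\srl{H}^1(G,\pi)\neq 0$ together with finite generation means there \emph{exists} a non-trivial $\mu$-harmonic cocycle; but the statement is about \emph{any} such $b$, so this existence remark is only orientational. Fix then an arbitrary non-trivial $\mu$-harmonic cocycle $b$ and an element $z\in Z(G)$. Since $z$ is central, its conjugacy class $C=\{z\}$ is finite (a singleton). Because $\pi$ is ergodic, $G$ is finitely generated and $\mu$-Liouville, Lemma \ref{tergconj-l} applies to the conjugacy class $C=\{z\}$, and its conclusion $\frac{1}{|C|}\sum_{c\in C}b(cg)=b(g)$ reads simply $b(zg)=b(g)$ for all $g\in G$.

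Next I would feed this identity into Lemma \ref{tergtrivcent-l}, whose hypothesis is exactly ``$\exists z$ such that $\forall g\in G,\ b(g)=b(zg)$''. That lemma yields $b(z)=0$ and $\pi(z)\xi=\xi$ for every $\xi\in\img b$; by continuity of $\pi(z)$ this extends to $\srl{\img b}$, i.e. $z\in\ker\pi_{|\img b}$. Since $z\in Z(G)$ was arbitrary, we get $Z(G)\subset\ker b$ and $Z(G)\subset\ker\pi_{|\img b}$, which is the desired inclusion $Z(G)\subset\ker b\cap\ker\pi_{|\img b}$.

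There is essentially no hard step left once the two lemmas are in place: the whole content is the bookkeeping that a central element has a singleton conjugacy class, so Lemma \ref{tergconj-l} degenerates to the translation-invariance $b(zg)=b(g)$, which is precisely the trigger for Lemma \ref{tergtrivcent-l}. The only point requiring a word of care is that Lemma \ref{tergconj-l} is phrased for a finite conjugacy class $C$ of $G$ and one must note that $\{z\}$ qualifies; and that $h_\eta$ being a coefficient-type function is what makes the ergodicity hypothesis bite inside that lemma — but this is internal to the already-established Lemma \ref{tergconj-l} and need not be reproven here. Hence the proof is just: apply Lemma \ref{tergconj-l} with $C=\{z\}$, then apply Lemma \ref{tergtrivcent-l}.
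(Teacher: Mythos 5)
Your proof is correct and matches the paper exactly: Theorem \ref{tergliou-t} is stated there precisely as the combination of Lemma \ref{tergconj-l} (applied to the singleton conjugacy class of a central element) with Lemma \ref{tergtrivcent-l}. The extra bookkeeping you supply is exactly the intended, and only, content of the argument.
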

\begin{proof}
Let $z \in Z(G)$.
Then $z$ has a finite conjugacy class (it consists in the singleton $z$ itself!).
By Lemma \ref{tergconj-l}, $b(zg) = b(g)$ for all $g \in G$.
By Lemma \ref{tergtrivcent-l}, $z \in \ker \pi_{| \img b}$ and $b(z) =0$.
This finishes the proof.
\end{proof}

Here is a small strengthening of Lemma \ref{tergtrivcent-l}. Note that the conclusion on the triviality of the cocycle goes through only for weakly mixing representations.
\begin{lem}\label{twmtrivfc-l}
Assume $C \subset G$ is a finite subset and that $\forall g \in G, b(g) = \tfrac{1}{|C|} \sum_{c \in C} b(cg)$ then $C \subset \ker \pi_{| \img b}$.
If further $C$ is a finite conjugacy class, then $\{b(c) \mid c \in C\}$ span a finite-dimensional subrepresentation of $\pi$ (possibly $b(C) =\{0\}$).
\end{lem}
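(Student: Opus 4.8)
The plan is to mimic the proof of Lemma~\ref{tergtrivcent-l} but carry along the averaging structure. First I would use $b(e)=0$ and the hypothesis applied at $g=e$ to extract information: $0 = b(e) = \tfrac{1}{|C|}\sum_{c\in C} b(c)$, so the values $b(c)$ average to zero. Then, plugging the hypothesis back into itself via the cocycle relation, for any $g$,
\[
b(g) = \tfrac{1}{|C|}\sum_{c\in C} b(cg) = \tfrac{1}{|C|}\sum_{c\in C}\big(b(c) + \pi(c)b(g)\big) = \tfrac{1}{|C|}\sum_{c\in C}\pi(c)b(g),
\]
using the previous observation that $\sum_c b(c)=0$. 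So the operator $A := \tfrac{1}{|C|}\sum_{c\in C}\pi(c)$ fixes every vector in $\img b$ (hence in $\srl{\img b}$, since $A$ is a bounded operator and $\img b$ is dense there). For unitary $\pi$, $A$ is an average of unitaries, so $\|A\|\le 1$, and $Av=v$ forces $\pi(c)v=v$ for every $c\in C$ — this is the standard strict-convexity/equality-in-the-triangle-inequality argument in a Hilbert space ($\|v\| = \|\tfrac1{|C|}\sum\pi(c)v\| \le \tfrac1{|C|}\sum\|\pi(c)v\| = \|v\|$ forces all the $\pi(c)v$ to be equal, hence equal to their average $v$). Therefore every $c\in C$ acts trivially on $\srl{\img b}$, i.e. $C \subset \ker\pi_{|\img b}$.

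For the second assertion, suppose $C$ is a finite conjugacy class, say $C = \{hc_0h^{-1} : h\in G\}$ for a fixed $c_0\in C$. Let $\ms{W} := \spn\{b(c): c\in C\}$, which is finite-dimensional since $|C|<\infty$. The key point is to show $\ms{W}$ is $\pi(G)$-invariant. For $g\in G$ and $c\in C$, the cocycle identity \eqref{eqcocconj} (or direct manipulation) gives
\[
\pi(g)b(c) = b(gc) - b(g) = b\big((gcg^{-1})g\big) - b(g) = b(gcg^{-1}) + \pi(gcg^{-1})b(g) - b(g).
\]
Now $gcg^{-1}\in C$ again, so $b(gcg^{-1})\in\ms{W}$; and $gcg^{-1}\in C\subset\ker\pi_{|\img b}$ by the first part, so $\pi(gcg^{-1})b(g) = b(g)$ (as $b(g)\in\img b$), making the last two terms cancel. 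Hence $\pi(g)b(c) = b(gcg^{-1})\in\ms{W}$, so $\ms{W}$ is invariant and $\pi|_{\ms{W}}$ is a finite-dimensional subrepresentation. (If all $b(c)=0$ then $\ms{W}=\{0\}$, the degenerate case.)

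The main obstacle is organizing the cancellation in the second part correctly: one must be careful that the term $\pi(gcg^{-1})b(g)$ is exactly killed using that $gcg^{-1}$ lies in the conjugacy class $C$ and hence in the kernel of $\pi$ restricted to $\img b$, noting $b(g)\in\img b$. The first part is essentially Lemma~\ref{tergtrivcent-l} with an averaged vector in place of a single group element, so the only genuinely new ingredient is the strict-convexity argument passing from $A v = v$ to $\pi(c)v = v$ for each individual $c$, which is why unitarity (or strict convexity of the norm) is needed and why, as the remark preceding the statement notes, the triviality conclusion does not obviously survive for general isometric representations. Everything else is a direct bookkeeping with the cocycle relation.
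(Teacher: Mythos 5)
Your proposal is correct and follows essentially the same route as the paper: the averaging identity $0=b(e)=\tfrac1{|C|}\sum_c b(c)$, the resulting fact that $\tfrac1{|C|}\sum_c\pi(c)$ acts as the identity on $\img b$, the Hilbert-space strict-convexity step to conclude $\pi(c)\xi=\xi$ (the paper phrases it via $\|\xi-\pi(c)\xi\|^2=2\|\xi\|^2-2\pgen{\pi(c)\xi\mid\xi}$ rather than equality in the triangle inequality, but it is the same argument), and the identity $\pi(g)b(c)=b(gcg^{-1})$, which is exactly the paper's use of \eqref{eqcocconj} with $h=c$, to get invariance of the finite-dimensional span.
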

\begin{proof}
Note that 
\[
0 = b(e) = \tfrac{1}{|C|} \sum_{c \in C} b(c).
\]
Using this, one gets
\[
b(g) = \tfrac{1}{|C|} \sum_{c \in C} b(cg) = \tfrac{1}{|C|} \sum_{c \in C} \pi(c)b(g).
\]
So $\tfrac{1}{|C|} \sum_{c \in C} \pi(c) = \Id_{| \img b}$. For any $\xi \in \img b$,
\[
\|\xi\|^2_\Hi = \tfrac{1}{|C|} \sum_{c \in C} \pgen{  \pi(c) \xi \mid \xi }.
\]
Since $ \pgen{  \pi(c) \xi \mid \xi } \in [-\|\xi\|^2_\Hi,\|\xi\|^2_\Hi]$, each term of the average on the right-hand side must be equal to $\|\xi\|^2$. Using the classical trick that $\|\xi -\pi(c)\xi\|^2 = 2 \|\xi\|^2 - 2 \pgen{\pi(c) \xi \mid \xi}$, one gets that $\pi(c) \xi = \xi$. Hence $C \subset \ker \pi_{| \img b}$.

Using \eqref{eqcocconj} with $h =c$ and noting that $\pi(gcg^{-1})$ is trivial on $\img b$, one gets that 
\[
b(gcg^{-1}) = \pi(g)b(c).
\]
This implies the second claim.
\end{proof}
The previous lemma will be shortly applied to weakly mixing representations. 
In order to obtain a stronger result than Theorem \ref{tergliou-t}, the Liouville hypothesis will be dropped.
Since the random walk no longer converges to an invariant mean, some preliminary work on the coefficients of weakly mixing representations needs to be done.
\begin{lem}\label{twmsynd-l}
Assume $\pi$ is a weakly mixing representation of $G$.
Let $\kappa_1, \ldots, \kappa_n$ be coefficient functions of $\pi$.
For any $\eps >0$ the set $T_\eps = \{ g \in G \mid \text{for all } i, |\kappa_i(g)| <\eps \}$ is thickly syndetic and will be hit almost surely infinitely many times by a [irreducible] random walk. 
\end{lem}
\begin{proof}
For any $\eps >0$ the sets $T_i = \{ g \in G \mid |\kappa_i(g)| <\eps \}$ are thickly syndetic (see Definitions \ref{defsynd} and \ref{defmix}).
The first step is to prove that the intersection of finitely many thickly syndetic set is thickly syndetic.

This can be obtained by induction using the following claim: if $A$ is thickly syndetic and $B$ is thickly syndetic, then $A \cap B$ is thickly syndetic. 
To prove the claim, consider first the simpler case where $A$ is syndetic. Let $S$ be such that $SA = G$, then 
\[
\begin{array}{rll}
\displaystyle \bigcap_{t \in S} tB 
&= \displaystyle G \cap \big( \bigcap_{t \in S} tB \big) 
&= \displaystyle \big( \cup_{s \in S}  sA \big) \cap \big( \bigcap_{t \in S} tB \big)\\
&= \displaystyle \bigcup_{s \in S} \bigg( sA  \cap \big( \bigcap_{t \in S} tB \big) \bigg)
&= \displaystyle \bigcup_{s \in S} s \bigg( A  \cap \big( \bigcap_{t \in S} s^{-1}tB \big) \bigg)\\
& \subseteq \displaystyle  \bigcup_{s \in S} s \big( A  \cap B \big) 
\end{array}
\]
Since $\bigcap_{t \in S} tB $ is also syndetic (it's even thickly syndetic), $A\cap B$ is syndetic.
If $A$ is actually thickly syndetic, then, for any finite subset $F \subset G$, let $A' = \cap_{f \in F} fA$ and $B' = \cap_{f \in F} fB$. 
$A'$ and $B'$ are still thickly syndetic, so in particular $A'$ is syndetic.
Applying the above argument to $A'$ and $B'$ (instead of $A$ and $B$), one gets that 
% $\displaystyle \bigcap_{t \in S} tB'  \subseteq  \bigcup_{s \in S} s \big( A'  \cap B' \big)$.
% This means that 
$A' \cap B' = \cap_{f \in F} (A \cap B)$ is syndetic.
Since this holds for any $F$, $A \cap B$ is thickly syndetic.

This first steps shows that the set $T_\eps$ is syndetic (for any $\eps >0$).
Note that in order to keep with the standard definitions, syndetic, thick and thickly syndetic were defined with left-multiplication. 
Because the unique left-invariant mean on WAP functions is also right-invariant, the flight function property is also true for right multiplication (this follows from reading Glasner's book \cite[1.51 Theorem and 1.52 Lemma]{Glasner} with right multiplication instead of left multiplication).
Alternatively, just note that the random walk is symmetric: the time $n$ distributions of the left- and right-multiplications are identical. So if the random walk by left-multiplication hits the set infinitely often, then so does the random walk by right-multiplication.

The second step is to show that, if $L$ is a [right]-syndetic set, then a [irreducible] random walk [on the right] hits it almost surely. 
Indeed, there is a $S$ so that $LS=G$. 
This means that for any $g \in G$ there is a $s \in S$ and a $l \in L$ so that $g = l s $.
One can ``decorate'' the elements of $G$ with all the $s \in S$ so that $g = l s$ for some $l \in L$.
Since the set of ``decorations'' $S$ is finite, some decoration will occur infinitely many times.
Because $S$ is finite [and the random walk is irreducible], the random walk will almost surely produce all elements of $S$ starting from any decoration which occurs infinitely often.
This means that the random walks hits the set $L$ almost surely infinitely many times.
\end{proof}
This technical preparation being over, here is the analogue of Lemma \ref{tergconj-l}.
\begin{lem}\label{twmconj-l}
Assume $\pi$ is weakly mixing, $b$ is a $\mu$-harmonic cocycle,
% (with $\|b\|_\mu < \infty$), 
$G$ is finitely generated and $g$ has a finite conjugacy class $C$. Then 
\[
\frac{1}{|C|}\sum_{c \in C} b(cg) = b(g). \qedhere
\]
\end{lem}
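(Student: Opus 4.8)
The plan is to follow the same transport-pattern strategy used in Lemma \ref{tergconj-l}, replacing the Liouville hypothesis (which guaranteed convergence of $P^n_g$ to an invariant mean) by the weak mixing hypothesis, which instead forces decay of the coefficients directly. As in the proof of Lemma \ref{tergconj-l}, fix $\eta \in \Hi$ and set $h(\cdot) = h_\eta(\cdot) = \pgen{b(\cdot)\mid\eta}$; by Lemma \ref{tgradcond-l} this is a $\mu$-harmonic function whose gradient is given by coefficients of $\pi$. Writing each $c = s_{1,c}\cdots s_{|c|,c} \in C$ and building the transport plan $\tau_n$ that moves the mass of $P^n_g$ from $g'$ to $Cg' = g'C$ along shortest paths, the identical computation yields
\[
\frac{1}{|C|}\sum_{c\in C} h(cg) - h(g) = \frac{1}{|C|}\sum_{c\in C}\sum_{i=1}^{|c|} \pgen{ \kappa_{\pi(s_1\cdots s_{i-1})b(s_i),\eta}\mid P^n_g },
\]
and the left-hand side is independent of $n$.

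The key difference is the argument that the right-hand side tends to $0$. Here one does not pass $P^n_g$ to a limiting mean; instead one uses that $P^n_g$ spreads out — more precisely, for a finitely supported coefficient $\kappa \in \WAP(G)$ with $m(|\kappa|)=0$ (which holds since $\pi$ is weakly mixing), one has $\pgen{\kappa \mid P^n_g} \to 0$ as $n\to\infty$. This is where weak mixing is used in its ``flight function'' form: $|\kappa|$ is arbitrarily small outside a thickly syndetic set, and the random walk escapes every finite set, so the pairing against the probability measures $P^n_g$ vanishes in the limit. (Alternatively, if one prefers to mimic Lemma \ref{tergconj-l} literally, one takes a weak-$*$ limit point $m_0$ of $P^n_g$ along a subsequence; $m_0$ is an invariant mean on $\ell^\infty G$, hence agrees with the unique invariant mean on $\WAP(G)$, on which $|\kappa_{\eta,\xi}|$ — and a fortiori $\kappa_{\eta,\xi}$ — has mean $0$. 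Note this also gives ergodicity of $\pi$ for free, so no separate Liouville hypothesis is needed.) Since there are only finitely many summands, each tending to $0$, the whole right-hand side tends to $0$, forcing $\frac{1}{|C|}\sum_{c\in C} h_\eta(cg) = h_\eta(g)$ for every $\eta$, and hence $\frac{1}{|C|}\sum_{c\in C} b(cg) = b(g)$.

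The main obstacle is the convergence step: one must be careful that $P^n_g$ is being paired against genuine coefficients $\kappa_{\pi(s_1\cdots s_{i-1})b(s_i),\eta}$, and that these lie in $\WAP(G)$ with vanishing mean — the former because coefficients of unitary (reflexive) representations are WAP, the latter because weak mixing gives $m(|\kappa_{\eta,\xi}|)=0$ and thus $m(\kappa_{\eta,\xi})=0$. One also needs the uniform bound $\|\tau_n\|_{\ell^1}\le K := \max_{c\in C}|c|_S$ so that only finitely many coefficient-pairings appear and the interchange of limit and (finite) sum is harmless. Everything else is a verbatim repetition of the computation in Lemma \ref{tergconj-l}; no new estimate beyond the mean-zero property of weakly mixing coefficients is required.
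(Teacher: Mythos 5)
Your transport-pattern computation is fine and is exactly what the paper does, but the convergence step --- the only place where the mixing hypothesis enters --- has a genuine gap, and it is precisely the gap that the paper's proof is designed to get around. Weak mixing only tells you that $|\kappa_{\eta,\xi}|$ is small on a \emph{thickly syndetic} set $D_\eps$; it does not put the coefficient in $c_0G$ (that would be strong mixing). The complement of a thickly syndetic set is merely ``not thick'', and such a set can itself be syndetic and can carry a fraction of the mass of $P^n_g$ bounded away from $0$ for all $n$. So ``the random walk escapes every finite set, hence $\pgen{\kappa\mid P^n_g}\to 0$'' is false for flight functions. Your parenthetical fallback is no better: a weak-$*$ limit point of $P^n_g$ is only a $\mu$-stationary mean, not a $G$-invariant one. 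The passage from ``limit point of $P^n_g$'' to ``invariant mean on $\ell^\infty G$'' is exactly what the Kaimanovich--Vershik/Liouville hypothesis buys in Lemma \ref{tergconj-l}, and it is unavailable here; indeed for $G$ non-amenable there are no invariant means on $\ell^\infty G$ at all, yet the lemma must hold for such $G$.

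The paper's proof replaces $P^n_g$ by a sequence $F^n_g$ built by $\mu$-\emph{firings}: starting from $\delta_g$, one repeatedly fires only at those points lying outside the set $T'$ on which all of the finitely many coefficients appearing in the sum are $<\eps$ ($T'$ is thickly syndetic, hence thick, as a finite intersection of thickly syndetic sets). Harmonicity of $h$ gives $\pgen{h\mid F^n_g}=h(g)$ for any such firing pattern, and thickness of $T'$ forces the mass of $F^n_g$ outside $T'$ to tend to $0$, so the right-hand side is eventually $\leq K|C|\eps$. This is the missing idea in your proposal: you need to steer the mass into the good set rather than hope the unmodified random walk lands there.
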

\begin{proof}
Given a function $f$, let us say that $\tilde{f}$ was obtained by $\mu$-firing $f$ if $\tilde{f} = f - a f(g) \delta_g + a f(g) \mu$ where $a \in ]0,1]$. If $h$ is $\mu$-harmonic, and $\nu$ is obtained by $\mu$-firings of $\delta_e$ then $h * \nu = h$. 

Let $F^n$ be a sequence to determine yet, but which was obtained by $\mu$ firings of $\delta_e$. Let $F^n_g$ be the translate of these functions (so that the firing began at $\delta_g$).

The proof mainly goes on as for Lemma \ref{tergconj-l}.
If $h$ is $\mu$-harmonic, then $\langle h \mid F^n_g \rangle = h(g)$.
Hence, if $C$ is a finite conjugacy class,
$\frac{1}{|C|} \sum_{c \in C} h(cg) - h(g) = \langle \nabla h \mid \tau_n \rangle$
where, again, $\tau_n$ is the transport plan obtained by taking the mass of $F^n_g$ at $g'$, split in $|C|$ masses, and take them (along a shortest path) to $g'c$ (for $c \in C$). 
For $K:= \max_{c \in C} |c|_S$ one has $\|\tau_n\|_{\ell^1} \leq K$
and 
$g'C = Cg'$, so that $\tau_n$ transports the mass uniformly to $Cg'$. 

Let $c = s_{1,c} s_{2,c} \ldots s_{|c|,c}$, then
\[
\displaystyle \tfrac{1}{|C|} \sum_{c \in C} h(cg) - h(g)  
= \tfrac{1}{|C|} \sum_{c \in C} \sum_{i = 1}^{|c|} \pgen{ \kappa_{\pi(s_1 \ldots s_{i-1}) b(s_i),\eta } \mid F^n_{g} } \\
\]
Again the left-hand side does not depend on $n$. 

% A WAP function $w$ has the property that for any $\eps>0$ the set $T= \{ x \mid |w(x)| < \eps \}$ is a thickly syndetic.
% Recall that the intersection of finitely many thickly syndetic set is thickly syndetic.
% % syndetic. 
% Hence, the set $T'$ where all the coefficient functions coming in the above sums are $<\eps$ is thickly syndetic.
% % syndetic.
% % Note that in order to keep with the standard definitions, syndetic, thick and thickly syndetic were defined with left-multiplication. 
% % Because the unique left-invariant mean on WAP functions is also right-invariant, the flight function property is also true for right multiplication (this follows from reading \cite[1.51 Theorem and 1.52 Lemma]{Glasner} with right multiplication instead of left multiplication).
% If $L$ is a [right]-syndetic set, then a simple random walk [on the right] hits it almost surely. 

As in Lemma \ref{twmsynd-l}, let $T_\eps$ be the set  where all the coefficient functions coming in the above sums are $<\eps$
Define $F^n_g$ from $F^{n-1}_g$ by firing at all the masses which are not in $T'$.
Since the set $T'$ is syndetic, a random walker will hit it almost surely. 
This implies that the mass of $F^n_g$ supported on $T^\comp$ tends to $0$ as $n \to \infty$.

Since the sums are finite and the coefficient functions bounded, the right-hand side is $\leq K|C|\eps$ as $n\to \infty$.
But this can be done for any $\eps>0$, yielding the claim.
\end{proof}
Again, a combination of the Lemmas \ref{twmtrivfc-l} and \ref{twmconj-l} yield directly
\begin{teo}\label{twmconj-t}
Assume $\pi$ is weakly mixing, $\srl{H}^1(G,\pi) \neq 0$ and $G$ is finitely generated. 
Then, for any non-trivial $\mu$-harmonic cocycle $b$, $Z^{FC}(G) \subset \ker b \cap \ker \pi_{| \img b}$.
\end{teo}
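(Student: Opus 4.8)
The plan is to combine Lemma \ref{twmconj-l} with the preceding ``small strengthening'' of Lemma \ref{tergtrivcent-l}, and then invoke the characterisation of weak mixing for unitary representations (no non-zero finite-dimensional subrepresentation). First I would fix a non-trivial $\mu$-harmonic cocycle $b$. Since $G$ is finitely generated, $\ker d_\mu^*$ is a Hilbert space for $\|\cdot\|_\mu$, so $\|b\|_\mu < \infty$ automatically and Lemma \ref{twmconj-l} applies to $b$. (The hypothesis $\srl{H}^1(G,\pi)\neq 0$ is used only to guarantee that such a $b$ exists, via the isomorphism $\srl{H}^1(G,\pi) = \ker d_\mu^*$ recorded in \S\ref{ssharmcoc}.)

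Next, let $z \in Z^{FC}(G)$ and let $C$ be its conjugacy class, which is finite by definition of the FC-centre; note $z \in C$ and $gCg^{-1}=C$ for all $g\in G$. Lemma \ref{twmconj-l} then gives $\frac{1}{|C|}\sum_{c\in C} b(cg) = b(g)$ for every $g \in G$. Plugging this identity into the strengthening of Lemma \ref{tergtrivcent-l} yields two things at once: $C \subset \ker \pi_{|\img b}$, and the vectors $\{b(c) \mid c \in C\}$ span a finite-dimensional subrepresentation of $\pi$.

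The final step is where weak mixing is used: a weakly mixing unitary representation contains no non-zero finite-dimensional subrepresentation, so $\spn\{b(c) \mid c\in C\} = \{0\}$; in particular $b(z)=0$, i.e. $z\in\ker b$. Together with $z\in C\subset\ker\pi_{|\img b}$ this gives $z\in\ker b\cap\ker\pi_{|\img b}$, and since $z\in Z^{FC}(G)$ was arbitrary, $Z^{FC}(G)\subset\ker b\cap\ker\pi_{|\img b}$, as claimed.

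I do not expect a genuine obstacle here: all the analytic content (the firing/transport-plan argument establishing $\frac{1}{|C|}\sum_c b(cg)=b(g)$ from weak mixing and harmonicity) is already carried out in Lemma \ref{twmconj-l}, and the finite-dimensionality conclusion is the elementary averaging argument ($\|\xi\|^2 = \frac1{|C|}\sum_c\langle\pi(c)\xi\mid\xi\rangle$ forces $\pi(c)\xi=\xi$, then \eqref{eqcocconj} controls $b(gcg^{-1})$). The one point to keep straight is the convention that $\img b$ denotes the closed linear span of the image of $b$, so that $\ker\pi_{|\img b}$ is a bona fide subrepresentation — exactly as recalled just before Lemma \ref{tergtrivcent-l}.
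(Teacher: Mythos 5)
Your proposal is correct and follows exactly the route the paper intends: it combines Lemma \ref{twmconj-l} with the unnamed strengthening of Lemma \ref{tergtrivcent-l}, and then uses weak mixing (no non-zero finite-dimensional subrepresentation) to kill the span of $\{b(c)\mid c\in C\}$ and conclude $b(z)=0$. The paper simply states that the theorem is ``a combination of the two previous lemmas,'' and your write-up supplies precisely the missing glue, including the correct observation that finite generation guarantees $\|b\|_\mu<\infty$.
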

\begin{proof}
Let $z \in Z^{FC}(G)$.
Then $z$ has a finite conjugacy class $C$ (by definition).
By Lemma \ref{twmconj-l}, $\frac{1}{|C|}\sum_{c \in C} b(cg) = b(g)$ for all $g \in G$.
By Lemma \ref{twmtrivfc-l}, $b(C)$ spans a finite-dimensional subrepresentation of $\pi$ and $C \subset \ker \pi_{| \img b}$.
Since $\pi$ is weakly mixing, this subrepresentation must be $\{0\}$.
This implies that $b(C) =\{0\}$ and $C \subset \ker b$.
\end{proof}
Bekka, Pillon \& Valette \cite[\S{}4.6 and Corollary 4.13]{BPV} showed that isometric action of groups with property (FAb) also quotient through the FC-centre.

% Let us start with the proof of Theorem \ref{tcen-t}.
\begin{proof}[Proof of Theorem \ref{tcen-t}]
Theorem \ref{tergliou-t} or Theorem \ref{twmconj-t} basically form the statement of Theorem \ref{tcen-t} before the ``In particular''.
So any harmonic cocycle is trivial on the centre or the FC-centre (according to which point of the Theorem is under consideration).
Assume a subgroup $H$ of the centre or FC-centre is ergodic.
Theorem \ref{tergliou-t} or Theorem \ref{twmconj-t} imply that $H \subset \ker \pi_{| \img b}$.
This in turns means that $\img b =\{0\}$.
Since all harmonic cocycles have a trivial image, Corollary \ref{tidharcocredcoh-c} implies $\srl{H}^1(G,\pi)=0$.
\end{proof}

%----------------------------------
\subsection{Corollaries}
%----------------------------------

The following corollary is a new proof of a result of Guichardet \cite[Th\'eor\`eme 7 in \S{}8]{Gu}.
\begin{cor}\label{tguich-c}
Assume $G$ is nilpotent. If $\srl{H}^1(G,\pi) \neq 0$, then $1 < \pi$.
\end{cor}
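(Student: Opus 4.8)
The statement to prove is: if $G$ is nilpotent and $\srl{H}^1(G,\pi) \neq 0$ for a unitary representation $\pi$, then $\un < \pi$ (\ie $\pi$ contains the trivial representation). The plan is to argue by contraposition: assume $\pi$ does \emph{not} contain the trivial representation, \ie $\pi$ is ergodic (in the sense of Definition \ref{defmix}, no invariant vectors), and deduce $\srl{H}^1(G,\pi) = 0$. The engine will be Theorem \ref{tergliou-t}, which requires three inputs: $\pi$ ergodic (granted by assumption), $G$ finitely generated, and $G$ $\mu$-Liouville for some symmetric finitely supported $\mu$.

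First I would reduce to the finitely generated case. A nilpotent group need not be finitely generated, but reduced cohomology in degree one is detected by finitely generated subgroups in the appropriate sense; more to the point, one can pass to the finitely generated subgroups $G_i$ exhausting $G$ and use that $\srl{H}^1(G,\pi) \neq 0$ forces $\srl{H}^1(G_i, \pi_{|G_i}) \neq 0$ for some (cofinally many) $i$, while each $G_i$ is again nilpotent and $\pi_{|G_i}$ still has no invariant vectors if one is careful — actually here a subtlety arises, since $\pi_{|G_i}$ \emph{could} acquire invariant vectors. I expect the cleanest route is: a finitely generated nilpotent group $G_i$ is $\mu$-Liouville (this is classical — finitely generated nilpotent, indeed virtually nilpotent, groups have trivial Poisson boundary), so Theorem \ref{tergliou-t} applies whenever $\pi_{|G_i}$ is ergodic; if $\pi_{|G_i}$ is \emph{not} ergodic, then $G_i$ fixes a vector, but since $G = \bigcup G_i$ and $G$ has no invariant vector, one handles this by noting the space of $G_i$-invariant vectors is decreasing with intersection $\{0\}$, and runs a limiting/induction argument. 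So the main structural step is: for each finitely generated nilpotent $G_i$, either $\pi_{|G_i}$ has an invariant vector, or (being $\mu$-Liouville and ergodic) $Z(G_i) \subset \ker b$ for every harmonic cocycle $b$, by Theorem \ref{tergliou-t}.

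The heart of the argument is then an induction on the nilpotency class, leveraging that $Z(G_i)$ is nontrivial (it is nonzero for any nontrivial nilpotent group) and is q-normal — in fact, central, hence certainly infinite if $G_i$ is infinite and torsion-free in its centre, but in general one must deal with a possibly finite centre. The genuinely useful fact is that for an infinite finitely generated nilpotent group, the centre contains an infinite (central, hence normal) subgroup $N$; by Theorem \ref{tergliou-t} every harmonic cocycle kills $N$, so $b$ factors through $G_i/N$, which is nilpotent of smaller class (or smaller, by Hirsch length); and the quotient representation argument (Remark \ref{ractnorm}: since $N \lhd G_i$ and $\pi_{|N}$ is ergodic — is it? $N$ is infinite so if $\pi_{|G_i}$ is ergodic one needs $\pi_{|N}$ ergodic too, which is where one may instead want $\pi$ weakly mixing; for merely ergodic $\pi$ restriction to a subgroup need not be ergodic) lets one descend. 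I would therefore set up the induction on Hirsch length $h(G_i)$: base case $h = 0$ (finite group) is trivial since finite groups have no reduced cohomology issues / $\pi$ finite-dimensional; inductive step uses $N < Z(G_i)$ infinite, $Z(G_i) \subset \ker b$, so $b$ descends to $\srl{H}^1(G_i/N, \pi^N)$ where $\pi^N$ is the subrepresentation orthogonal to the $N$-invariants (or the induced quotient representation), and one invokes the inductive hypothesis — but only after checking $\pi^N$ has no $G_i/N$-invariant vectors, which follows since such a vector would be $G_i$-invariant in $\pi$.

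\textbf{Main obstacle.} The delicate point is the interaction between ``ergodic'' (no invariant vectors for $G$) and passage to subgroups/quotients: ergodicity is \emph{not} inherited by arbitrary subgroups, so when I restrict to $N < Z(G_i)$ or exhaust by $G_i < G$, I cannot directly re-apply Theorem \ref{tergliou-t}. The resolution should be to decompose $\pi = \pi' \oplus (\text{invariant vectors of the subgroup})$ and observe that the invariant part contributes nothing to the relevant reduced cohomology via a coboundary/averaging argument (a compact or amenable subgroup averaging, or using that on the invariant part the restricted cocycle is a homomorphism into a space on which the ambient group acts, forcing it to vanish on commutators — and nilpotent groups are generated, modulo the centre, by commutators). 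I expect the author's actual proof to be shorter, plausibly using that on a nilpotent group any unitary representation with no invariant vectors is actually such that harmonic cocycles vanish on the centre by Theorem \ref{tergliou-t} directly (in the finitely generated case) plus a clean induction, sidestepping my worry by invoking $\mu$-Liouville for all finitely generated nilpotent groups and handling non-ergodicity of restrictions by a direct-sum decomposition argument at the end.
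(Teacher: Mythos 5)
Your overall strategy is the right one and matches the paper's: argue by contraposition, invoke Theorem \ref{tergliou-t} (using that finitely generated nilpotent groups are $\mu$-Liouville) to kill the centre, and induct along the upper central series. But your write-up is a plan with two acknowledged obstacles that you do not actually close, and one of them is a genuine gap. The descent step is the problem: you propose to pass to $G/N$ for an infinite central $N$ and then worry (correctly) that you cannot control ergodicity of $\pi_{|N}$, nor justify the use of Remark \ref{ractnorm}, nor say cleanly what the ``quotient representation'' is. Your suggested fix --- decomposing $\pi$ into $N$-invariants and their complement and claiming the invariant part ``contributes nothing'' by an averaging or commutator argument --- is not a proof as stated; on the $N$-invariant part the restriction of $b$ to $N$ is a homomorphism into a $G$-module, and such homomorphisms need not vanish (this is exactly the non-ergodic situation you are trying to exclude).

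The move that closes the gap, and which your proposal misses, is to restrict \emph{first} to the subrepresentation $\srl{\img b}$, which is still ergodic. Theorem \ref{tergliou-t} then gives \emph{both} conclusions at once: $Z(G) \subset \ker b$ and $Z(G) \subset \ker \pi_{|\img b}$. Because the full centre (finite or not --- no infinite central subgroup is needed) acts trivially on $\img b$ and $b$ vanishes on it, the cocycle relation shows $b(zg)=b(gz)=b(g)$, so $b$ literally descends to a harmonic cocycle for the quotient representation of $G/Z(G)$ on $\img b$; that quotient representation is again ergodic, since any invariant vector pulls back to a $G$-invariant vector. One then repeats on $G/Z(G)$, and since the upper central series of a nilpotent group reaches $G$ in finitely many steps, $b\equiv 0$. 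No appeal to Remark \ref{ractnorm}, no ergodicity of restrictions to subgroups, and no case distinction on the size of the centre is needed. Your other worry, about $G$ not being finitely generated, is moot in context: harmonic cocycles and the $\mu$-Liouville property are only defined here for finitely generated groups, and the corollary lives under that standing hypothesis.
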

\begin{proof}
Assume $1 \not<\pi$ (\ie $\pi$ is ergodic). Let $b$ be a harmonic cocycle and restrict, if necessary, to the subrepresentation $\img b$ (which is also ergodic). 
By Theorem\ref{tergliou-t} (nilpotent groups are $\mu$-Liouville for any finitely supported $\mu$; see, for example, \cite{Avez74}), one gets that $Z \subset \ker \pi$. 
Hence $b$ gives a harmonic cocycle for the quotient representation $\pi$ on $G_1 := G/Z(G)$. 
Repeat the argument on $G_1$.
Since the upper central series give the whole of $G$, one gets that $b$ must be trivial, a contradiction.
\end{proof}
Note that one could make a similar argument by replacing ``nilpotent'' by ``virtually nilpotent'' and ``$\pi$ ergodic $\implies \srl{H}^1(G,\pi)=0$'' by ``$\pi$ weakly mixing $\implies \srl{H}^1(G,\pi)=0$''. Indeed, using Theorem \ref{twmconj-t} one gets that the cocycle are trivial on the largest element of the upper FC-central series. By Duguid \& McLain \cite[Theorem 2]{DMcL}, finitely generated groups whose FC-central series end in the full group are exactly the virtually nilpotent groups. This gives 
\begin{cor}\label{twmvnil-c}
Assume $G$ is virtually nilpotent. If $\pi$ is weakly mixing, then $\srl{H}^1(G,\pi) = 0$.
\end{cor}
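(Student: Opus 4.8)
The plan is to mimic the proof of Corollary \ref{tguich-c}, substituting the FC-central series for the upper central series and Theorem \ref{twmconj-t} for Theorem \ref{tergliou-t}. First I would recall the (upper) \textbf{FC-central series} of $G$: set $Z_0^{FC}(G) = \{e\}$ and, for each ordinal, let $Z_{\alpha+1}^{FC}(G)$ be the preimage in $G$ of $Z^{FC}(G/Z_\alpha^{FC}(G))$, taking unions at limit ordinals. By Duguid \& McLain \cite[Theorem 2]{DMcL}, a finitely generated group is virtually nilpotent if and only if this series reaches $G$ after finitely many steps (in fact finite generation makes each quotient $Z_{\alpha+1}^{FC}/Z_\alpha^{FC}$ finitely generated, so the series stabilises quickly and terminates at $G$).

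Next I would argue by contradiction: suppose $\pi$ is weakly mixing but $\srl{H}^1(G,\pi) \neq 0$, and fix a non-trivial $\mu$-harmonic cocycle $b$ (which exists by the Guichardet identification $\srl{H}^1(G,\pi) = \ker d_\mu^*$, once a finite symmetric generating set and a symmetric measure $\mu$ on it are chosen). Restricting to the subrepresentation $\img b$, which is again weakly mixing since weak mixing passes to subrepresentations, I may assume $\pi = \pi_{|\img b}$. By Theorem \ref{twmconj-t}, $Z^{FC}(G) \subset \ker b \cap \ker \pi$. Hence $b$ factors through a non-trivial $\mu$-harmonic cocycle $\bar b$ for the induced representation $\bar\pi$ of $G_1 := G/Z^{FC}(G)$ on the same Hilbert space; $\bar\pi$ is still weakly mixing (its coefficients are among those of $\pi$), and $G_1$ is still finitely generated. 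So I can repeat the argument on $G_1$, killing $Z^{FC}(G_1) = Z_2^{FC}(G)/Z_1^{FC}(G)$, and so on transfinitely. Since the FC-central series of the virtually nilpotent group $G$ terminates at $G$ after some ordinal $\alpha$, after $\alpha$ steps the cocycle has been shown to vanish identically on all of $G$, contradicting its non-triviality. (At a limit ordinal one uses that $b$ vanishes on the union of the $Z_\beta^{FC}(G)$ for $\beta<\gamma$, which is automatic since it vanishes on each; and a cocycle vanishing on a normal subgroup $N$ with $\pi_{|N}$ trivial descends to $G/N$.)

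One small point to nail down is that the descent is legitimate: if $N \lhd G$, $b$ is a cocycle with $N \subset \ker b \cap \ker \pi$, then $b(gn) = b(g) + \pi(g)b(n) = b(g)$ and $b(ng) = b(n) + \pi(n)b(g) = b(g)$ for $n \in N$, so $b$ is constant on cosets of $N$ and the cocycle relation passes to $G/N$ with respect to the representation $\pi$ descended to $G/N$ (well-defined because $N \subset \ker \pi$); moreover $\bar b$ is $\bar\mu$-harmonic for the pushed-forward measure, and $\bar b \neq 0$ precisely because $b \neq 0$. The harmonicity bookkeeping is routine: the image of a symmetric generating set and symmetric measure under $G \to G/N$ is again symmetric, and $\sum_s \mu(s)b(s) = 0$ pushes forward to $\sum_{\bar s}\bar\mu(\bar s)\bar b(\bar s) = 0$.

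The main obstacle is really just confirming that the hypotheses of Theorem \ref{twmconj-t} survive each step of the induction — weak mixing of the descended representation and finite generation of the quotient — together with citing Duguid--McLain correctly to guarantee the FC-central series exhausts $G$ in (finitely, hence) transfinitely many steps; the cocycle manipulations themselves are elementary. As the paragraph preceding the statement already asserts, "using Theorem \ref{twmconj-t} one gets that the cocycle are trivial on the largest element of the upper FC-central series," so the argument is exactly this iteration, and one might even phrase the whole proof in one line by invoking Corollary \ref{tguich-c}'s proof verbatim with the two indicated substitutions.
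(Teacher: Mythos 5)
Your proposal is correct and follows exactly the route the paper takes: it runs the proof of Corollary \ref{tguich-c} with the upper FC-central series in place of the upper central series and Theorem \ref{twmconj-t} in place of Theorem \ref{tergliou-t}, then invokes Duguid \& McLain to see that the series exhausts a finitely generated virtually nilpotent group. The descent and harmonicity bookkeeping you spell out is exactly what the paper leaves implicit, so there is nothing to add.
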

Shalom \cite[Corollary 5.1.3 and Lemma 4.2.2]{Shal} actually showed a stronger statement: if $\pi$ is not finite (\ie does not factor through a finite quotient of $G$) then $\srl{H}^1(G,\pi) = 0$.
\begin{cor}\label{tmimicentre-c}
Assume $G$ has an infinite FC-centre, then any representation $\pi$ with finite stabilisers has trivial reduced cohomology.
\end{cor}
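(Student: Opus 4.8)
The plan is to show that every $\mu$-harmonic cocycle $b\in Z^1(G,\pi)$ vanishes; since reduced cohomology is identified with the space of harmonic cocycles (\S\ref{ssharmcoc}), this gives $\srl{H}^1(G,\pi)=0$. As $\pi$ is unitary, I would decompose $\pi=\pi_{\mathrm{wm}}\oplus\pi_{\mathrm{fd}}$, with $\pi_{\mathrm{wm}}$ the weakly mixing part (orthogonal complement of the closed span of all finite-dimensional subrepresentations) and $\pi_{\mathrm{fd}}$ its complement, so that every non-zero subrepresentation of $\pi_{\mathrm{fd}}$ contains a finite-dimensional one. Both summands inherit finite stabilisers, and since the two projections are $G$-equivariant, $b=b_{\mathrm{wm}}\oplus b_{\mathrm{fd}}$ with $b_{\mathrm{wm}},b_{\mathrm{fd}}$ again harmonic cocycles. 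Put $Z:=Z^{FC}(G)$; it is infinite and \emph{normal}, hence q-normal in $G$ with $N^q_G(Z)=G$, since $gZg^{-1}\cap Z=Z$ is infinite for every $g$.

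If $b_{\mathrm{wm}}\neq 0$, then Theorem \ref{twmconj-t} gives $Z\subset\ker b_{\mathrm{wm}}$, so $\ker b_{\mathrm{wm}}$ is an infinite proper subgroup; by Lemma \ref{tlempt-l} it is then almost-malnormal, and Lemma \ref{twqnor-amal}.2 applied to $Z<\ker b_{\mathrm{wm}}$ forces $G=N^q_G(Z)\subset\ker b_{\mathrm{wm}}$, a contradiction. Hence $b_{\mathrm{wm}}=0$, and it remains to treat $b_{\mathrm{fd}}$.

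For $b_{\mathrm{fd}}$ it suffices to show $P_\sigma b_{\mathrm{fd}}=0$ for every finite-dimensional $G$-subrepresentation $\sigma\leq\pi_{\mathrm{fd}}$, since these span a dense subspace; here $P_\sigma b_{\mathrm{fd}}$ is a harmonic cocycle into the finite-dimensional $\sigma$, which has finite stabilisers. The key point is to produce an \emph{infinite normal} subgroup $H\lhd G$ with $\sigma|_H$ ergodic and $H^1(H,\sigma|_H)=0$: then Remark \ref{ractnorm} yields $H^1(G,\sigma)=0$, so $P_\sigma b_{\mathrm{fd}}$ is a coboundary and, being harmonic, equals $0$. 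If $Z$ contains an element $z$ of infinite order, take $H=K$, the finitely generated normal subgroup generated by the (finite) conjugacy class of $z$; it lies in $Z$ and, being a finitely generated FC-group, is virtually $\zz^r$ with $r\geq 1$; on the finite-index $\zz^r$, finite stabilisers force every character occurring in $\sigma|_{\zz^r}$ to be injective, hence non-trivial on each coordinate, so $H^1(\zz^r,\sigma|_{\zz^r})=0$ by the standard computation and thus $H^1(K,\sigma|_K)=0$. If instead $Z$ is a torsion group, take $H=Z$; it is locally finite, so $Z=\bigcup_i F_i$ for an increasing chain of finite subgroups, and since $\sigma$ is finite-dimensional with finite stabilisers the spaces $\mathrm{Fix}(\sigma|_{F_i})$ decrease to $0$ and hence stabilise, so the coboundary vectors witnessing $H^1(F_i,\sigma|_{F_i})=0$ agree for large $i$ and their common value shows $H^1(Z,\sigma|_Z)=0$. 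In both cases $\sigma|_H$ has no invariant vectors (finite stabilisers on an infinite group), hence is ergodic, and we conclude as above. Therefore $b_{\mathrm{fd}}=0$, so $b=0$.

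The only real difficulty is the finite-dimensional part: weak mixing, and with it Theorem \ref{twmconj-t}, is unavailable there, so the structure of $Z^{FC}(G)$ must be used directly. The mechanism is that a finite-dimensional representation with finite stabilisers has no eigenvalue $1$ along any infinite cyclic subgroup (so that cocycles of $\zz^r$ are coboundaries) and that on the locally finite torsion part of $Z$ one can pass to the limit over finite subgroups; the preliminary reduction to a single finite-dimensional $\sigma$ is precisely what makes the ``decreasing chain of fixed subspaces stabilises'' argument legitimate.
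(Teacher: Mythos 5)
Your argument is correct, and on the weakly mixing summand it coincides with the paper's own proof: the paper likewise feeds Theorem \ref{twmconj-t} (giving $Z^{FC}(G)\subset\ker b$) into Lemma \ref{tlempt-l} and observes that a kernel containing an infinite normal subgroup can be neither finite nor almost-malnormal, hence is all of $G$. Where you genuinely diverge is in splitting off the closed span $\pi_{\mathrm{fd}}$ of the finite-dimensional subrepresentations and treating it by hand. The paper does not do this: both of its proofs invoke Lemma \ref{twmconj-l}/Theorem \ref{twmconj-t} for the whole representation, even though those results are stated for weakly mixing $\pi$, and the paper itself stresses in \S\ref{suntwist} that ``finite stabilisers'' and ``weakly mixing'' are incomparable conditions (e.g.\ an irrational rotation character of $\zz$ has finite stabilisers but is not weakly mixing). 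So your extra step --- reducing to a single finite-dimensional $\sigma$ with finite stabilisers, producing an infinite normal $H\lhd G$ inside $Z^{FC}(G)$ (virtually $\zz^r$ via Neumann's theorem on finitely generated FC-groups when $Z^{FC}(G)$ has an element of infinite order, locally finite via Dietzmann's lemma in the torsion case) with $\sigma|_H$ ergodic and $H^1(H,\sigma|_H)=0$, and then invoking Remark \ref{ractnorm} --- is not padding: it is exactly the case that the paper's appeal to the weak-mixing lemma does not literally cover, and it is what makes the hypothesis ``finite stabilisers'' (rather than ``weakly mixing with finite stabilisers'') honest. The facts you leave implicit are all standard: injectivity of restriction to a finite-index subgroup for coefficients in a $\cc$-vector space, and the vanishing of a harmonic coboundary because $\ker d_\mu^*\cap\srl{B}^1(G,\sigma)=0$. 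I see no gap.
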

\begin{proof}[First proof]
Indeed, consider $b$ a harmonic cocycle and let $\pi'$ be the subrepresentation given by restricting to the image of $b$. 
Then $\pi'$ has also finite stabilisers. 
However, by Lemma \ref{twmconj-l}, $Z^{FC}(G) \subset \ker \pi'$. 
If $Z^{FC}(G)$ is infinite and $\pi'$ has finite stabilisers, then the image of $b$ must be $\{0\}$.
\end{proof}
\begin{proof}[Second proof]
Assume $\srl{H}^1(G,\pi)\neq 0$. 
Then for some harmonic cocycle $Z^{FC}(G) \subset \ker b$, by Lemma \ref{twmconj-l}. 
By Lemma \ref{tlempt-l}, $\ker b = G$ (it cannot be finite or almost-malnormal since it contains an infinite normal subgroup).
\end{proof}
Thanks to Lemma \ref{tlempopastyle-l}, the second proof also works under the weaker hypothesis that there is some infinite subgroup $H < Z^{FC}(G)$ such that $\pi_{| H}$ has finite stabilisers.
\begin{dfn}\label{defcomp}
For a finitely generated group $G$ and a finite generating set $S$, the compression of a cocycle $b \in Z^1(G,\pi)$ is the largest increasing function $\rho_-: \zz_{\geq 0 } \to \rr_{\geq 0}$ so that $\|b(g)\|^2_\Hi \geq \rho_-(|g|_S)$. 
\end{dfn}
For example, a cocycle is proper if $\rho_-$ is unbounded. Recall that, when $G$ is an amenable group, every cocycle for the left-regular representation on $\ell^2G$ is either bounded or proper by Peterson \& Thom \cite[Theorem 2.5]{PT}.
\begin{cor}\label{tcorharmcocproperg-c}
Assume a $\mu$-harmonic cocycle $b \in \srl{H}^1(G,\pi)$ is proper, $G$ is $\mu$-Liouville and $G$ has a infinite centre. Then $\pi$ is not ergodic and $G$ surjects onto $\zz$. 
\end{cor}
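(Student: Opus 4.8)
The plan is to combine the structural information about the centre already obtained in Theorem \ref{tergliou-t} with the properness assumption, and then derive the surjection onto $\zz$ by a quotient argument. First I would invoke Theorem \ref{tergliou-t}: since $G$ is finitely generated, $G$ is $\mu$-Liouville, and $b$ is a non-trivial $\mu$-harmonic cocycle, if $\pi$ were ergodic we would conclude $Z(G) \subset \ker b \cap \ker\pi_{|\img b}$. In particular every central element $z$ would satisfy $b(z)=0$. But the hypothesis that $Z(G)$ is infinite, combined with $b$ proper (so $\rho_-$ is unbounded and $\|b(g)\|_\Hi \to \infty$ as $|g|_S \to \infty$), forces $b$ to be unbounded on $Z(G)$, a contradiction. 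Hence $\pi$ is not ergodic; equivalently $\pi$ contains invariant vectors, \ie $1 < \pi$.

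Next I would extract the map onto $\zz$. The obstruction above shows that $b$ restricted to the infinite centre $Z := Z(G)$ cannot be trivial, so consider the cocycle restricted to $Z$. On $Z$, which is abelian, I would use the fact that central elements conjugate trivially: from \eqref{eqcocconj} with $g$ central, one gets $b(ghg^{-1}) = b(h)$, and more usefully, for $z \in Z$ and any $g \in G$, $b(zg) = b(z) + \pi(z)b(g)$ and $b(gz) = b(g) + \pi(g)b(z)$; equating (since $zg = gz$) gives $b(z) - \pi(g)b(z) = b(g) - \pi(z)b(g)$. Restricting attention to the subrepresentation $\img b$ and using whatever ergodicity-type decomposition is available, the component of $b_{|Z}$ landing in the invariant vectors of $\pi_{|Z}$ is an honest \emph{homomorphism} $Z \to \cc$ (or $\rr$). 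Since $b$ is proper and $Z$ is infinite, this homomorphism is non-zero, so its image is an infinite (finitely generated, as $Z$ is f.g. being a subgroup of... — actually one must be slightly careful here) subgroup of $\cc$, hence contains a copy of $\zz$; pulling back, $Z$ and therefore $G$ surjects onto $\zz$.

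The main obstacle I anticipate is the last step: producing a genuine \emph{group homomorphism} $G \to \zz$ (not merely $Z \to \cc$) from the properness of $b$ on the centre. One needs that the restriction of $b$ to $Z$, after projecting to the invariant part of $\pi_{|Z}$, is additive and unbounded, and then that this $\cc$-valued homomorphism on $Z$ extends or induces a $\zz$-valued homomorphism on all of $G$ — this requires that $Z$ is not contained in the kernel of every homomorphism $G \to \zz$, which should follow because a non-trivial homomorphism $Z \to \rr$ on a finitely generated abelian group has image a finitely generated dense-or-discrete subgroup, and composing with a rational projection lands in $\zz$; then one must check this descends from $Z$ to $G$, using that $Z$ is central (so the homomorphism $Z \to \zz$ is $G$-invariant under conjugation and hence, by a standard argument, the abelianization $G^{ab}$ surjects onto $\zz$ through it). A secondary subtlety is ensuring the projection of $b_{|Z}$ onto $\pi_{|Z}$-invariants is itself non-zero and unbounded: this is where properness of $b$ (rather than just non-triviality) is essential, since the non-invariant part of $b_{|Z}$ is automatically a coboundary on the abelian group $Z$ and hence bounded on... — again one uses here that harmonic cocycles on abelian groups split and the bounded part cannot account for the growth $\rho_-$.
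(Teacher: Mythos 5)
Your first step is exactly the paper's: by Theorem \ref{tergliou-t}, ergodicity of $\pi$ would force $Z(G)\subset\ker b$, which is incompatible with a proper $b$ on an infinite centre (central elements have unbounded word length, so $b$ is unbounded on $Z(G)$). Hence $\pi$ is not ergodic. That part is correct and is the intended argument.

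The second step has a genuine gap, and it comes from where you take invariant vectors. You project $b_{|Z}$ onto the $\pi_{|Z}$-invariant vectors and obtain, at best, a homomorphism defined only on the centre $Z$. You then need to promote a nonzero homomorphism $Z\to\zz$ to a surjection $G\to\zz$, and the ``standard argument'' you appeal to does not exist: a homomorphism on a central subgroup is automatically conjugation-invariant, but it induces nothing on $G^{\mathrm{ab}}$ in general, because the image of $Z$ in $G^{\mathrm{ab}}$ can be trivial. Concretely, there are finitely generated groups with infinite centre and finite (even trivial) abelianization --- e.g.\ the universal central extension of a finitely presented perfect group with infinite Schur multiplier, such as Deligne's central extension of $Sp_{2n}(\zz)$ --- so ``infinite centre plus a homomorphism $Z\to\zz$'' cannot by itself yield $G\surj\zz$. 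The homomorphism must be produced on all of $G$ from the start. The repair, which is what the paper's one-line proof is doing, is to decompose $\Hi=\Hi^{\pi(G)}\oplus\bigl(\Hi^{\pi(G)}\bigr)^{\perp}$ and write $b=b_0\oplus b_1$ accordingly: $b_0$ is a cocycle into the trivial subrepresentation, hence an honest homomorphism $G\to\Hi^{\pi(G)}$ defined on all of $G$; $b_1$ is a harmonic cocycle into the ergodic part, so Theorem \ref{tergliou-t} gives $Z(G)\subset\ker b_1$; properness of $b$ on the infinite centre then forces $b_0\neq 0$, and a nonzero homomorphism from a finitely generated group into a $\cc$-vector space has image a nontrivial finitely generated torsion-free abelian group, whence $G\surj\zz$. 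This also disposes of your ``secondary subtlety'' without having to argue that the non-invariant part of $b_{|Z}$ is bounded --- a claim that is itself delicate, since $b_{|Z}$ need not be harmonic for a generating set of $Z$.
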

\begin{proof}
By Theorem \ref{tergliou-t}, if $\pi$ is ergodic and $Z(G)$ is infinite, $\ker b$ is infinite and, hence, $b$ cannot be proper. 
So $\pi$ is not ergodic and the cocycle $b$ is not trivial when restricted to this non-ergodic subrepresentation. 
This gives directly a homomorphism to $\zz$.
\end{proof}
Likewise, one gets
\begin{cor}\label{tcorharmcocpropwm-c}
Assume a harmonic cocycle $b \in \srl{H}^1(G,\pi)$ is proper, $G$ is amenable and $G$ has a infinite FC-centre. 
Then $\pi$ is not weakly mixing and $G$ virtually surjects onto $\zz$.
\end{cor}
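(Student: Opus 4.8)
The plan is to run the argument of Corollary~\ref{tcorharmcocproperg-c}, with Theorem~\ref{twmconj-t} in place of Theorem~\ref{tergliou-t}, and then to add one new, essentially algebraic step that turns ``$\pi$ is not weakly mixing'' into ``$G$ virtually surjects onto $\zz$''. First I would record that a proper cocycle has finite kernel: if $\rho_-$ (the compression, Definition~\ref{defcomp}) is unbounded then $\rho_-(n)>0$ for all large $n$, so $\ker b$ lies in a ball. In particular $b$ is a non-trivial $\mu$-harmonic cocycle, so $\srl H^1(G,\pi)\neq 0$; if $\pi$ were weakly mixing then Theorem~\ref{twmconj-t} would give $Z^{FC}(G)\subset\ker b$, contradicting the infinitude of $Z^{FC}(G)$. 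Hence $\pi$ is not weakly mixing.

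Next I would locate a finite-dimensional subrepresentation carrying a non-trivial class. Write $\Hi=\Hi_{\mr{fd}}\oplus\Hi_{\mr{wm}}$, where $\Hi_{\mr{fd}}$ is the closed span of all finite-dimensional subrepresentations of $\pi$ --- which by a routine Zorn argument is a Hilbert sum of irreducible finite-dimensional subrepresentations --- and $\Hi_{\mr{wm}}=\Hi_{\mr{fd}}^{\perp}$, a subrepresentation that is weakly mixing (it has no non-zero finite-dimensional subrepresentation). Since the two orthogonal projections commute with $\pi$, the components $b_{\mr{fd}}$ and $b_{\mr{wm}}$ of $b$ are again $\mu$-harmonic cocycles, and Theorem~\ref{twmconj-t} applied to the (weakly mixing) subrepresentation on $\Hi_{\mr{wm}}$ --- or trivially when $b_{\mr{wm}}=0$ --- gives $Z^{FC}(G)\subset\ker b_{\mr{wm}}$. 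Then $\|b(z)\|^2=\|b_{\mr{fd}}(z)\|^2\geq\rho_-(|z|_S)$ for $z\in Z^{FC}(G)$, and the right-hand side is unbounded since $Z^{FC}(G)$ is infinite; hence $b_{\mr{fd}}\neq 0$, and projecting onto one irreducible finite-dimensional summand $\Hi_\rho$ of $\Hi_{\mr{fd}}$ produces a non-trivial harmonic cocycle, so the finite-dimensional unitary representation $\rho$ of $G$ satisfies $H^1(G,\rho)\neq 0$ (reduced and unreduced $H^1$ agree here, coboundaries being closed in finite dimension).

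The last step --- which I expect to be the real obstacle, and which is where amenability finally enters --- is the algebraic statement that a finitely generated amenable group $G$ with $H^1(G,\rho)\neq 0$ for some finite-dimensional unitary $\rho$ virtually surjects onto $\zz$. Here $\rho(G)$ is a finitely generated amenable subgroup of the compact group $\mr{U}(\dim\rho)$, hence virtually solvable by the Tits alternative, hence virtually abelian (its closure is a compact Lie group whose identity component, being connected compact solvable, is a torus). I would then pick a finite-index $N<G$ with $\rho(N)$ abelian; as the coefficient module is a $\cc$-vector space, corestriction composed with restriction is multiplication by $[G:N]$, so $\mr{res}\colon H^1(G,\rho)\to H^1(N,\rho_{|N})$ is injective and $H^1(N,\rho_{|N})\neq 0$. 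Diagonalising $\rho_{|N}$ as a sum of unitary characters $\chi_i$ of $N$ gives $H^1(N,\chi_i)\neq 0$ for some $i$: if $\chi_i$ has infinite image then $N^{\mr{ab}}$ is infinite and, being finitely generated, surjects onto $\zz$; if $\chi_i$ has finite image then $M=\ker\chi_i$ has finite index in $N$, the same injectivity gives $\mr{Hom}(M,\cc)=H^1(M,(\chi_i)_{|M})\neq 0$, and $M^{\mr{ab}}$ is infinite and finitely generated, so surjects onto $\zz$. In every case a finite-index subgroup of $G$ surjects onto $\zz$, as required. The remaining routine points are the existence and $\pi$-invariance of the $\Hi_{\mr{fd}}/\Hi_{\mr{wm}}$ splitting and making the linear-group input of the last step (Tits alternative, compact-Lie structure, transfer on $H^1$) precise.
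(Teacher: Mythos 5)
Your proof is correct, and its skeleton is the one the paper uses: properness together with Theorem \ref{twmconj-t} rules out weak mixing; the cocycle is then pushed onto the finite-dimensional part of $\pi$; and a non-trivial class for a finite-dimensional unitary representation of a finitely generated amenable group forces a virtual surjection onto $\zz$. The differences lie in how the last two steps are executed. For the middle step the paper only asserts that ``restricting to the finite-dimensional subrepresentations'' keeps the cocycle harmonic and yields a non-trivial class; your splitting $\Hi=\Hi_{\mr{fd}}\oplus\Hi_{\mr{wm}}$, the application of Theorem \ref{twmconj-t} to the weakly mixing complement to get $Z^{FC}(G)\subset\ker b_{\mr{wm}}$, and the use of properness on the infinite set $Z^{FC}(G)$ to force $b_{\mr{fd}}\neq 0$ is precisely the justification the paper leaves implicit, and it is worth having spelled out. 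For the final step the paper simply cites Shalom [Theorem 1.11.(1) or Theorem 4.3.1], whereas you reprove the needed special case from scratch: the Tits alternative makes $\rho(G)$ virtually abelian inside $\Uni(n)$, transfer (injectivity of restriction on $H^1$ with $\cc$-vector-space coefficients) passes the non-vanishing to a finite-index subgroup where $\rho$ diagonalises into characters, and the dichotomy on the image of a character finishes. This buys self-containedness at the cost of invoking the Tits alternative and some compact Lie theory; the citation is shorter and is the route the paper takes. Both arguments are sound, and your version has the merit of making visible exactly where amenability and finite generation enter.
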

\begin{proof}
By Theorem \ref{twmconj-t}, if $\pi$ is weakly mixing and $G$ has a infinite FC-centre, $\ker b$ is infinite and, hence, cannot be proper. 

Restricting our attention to the finite dimensional sub-representations does not change the fact that the cocycle is harmonic. 
Hence, some finite dimensional representation of $G$ has a non-trivial reduced cohomology.
By Shalom \cite[Theorem 1.11.(1) or Theorem 4.3.1]{Shal}, this implies $G$ virtually surjects onto $\zz$.
\end{proof}
The previous corollaries imply that the compression of Liouville [resp. amenable] groups with an infinite centre [resp. FC-centre] and which do not surject [resp. virtually surject] onto $\zz$ is never realised by a harmonic cocycle.

\begin{cor}\label{tcoproptor-c}
Let $G$ be a finitely generated amenable (resp. $\mu$-Liouville) group which is torsion (resp. whose abelianisation is torsion). 
No harmonic cocycle $b$ is proper or $G$ has a finite FC-centre (resp. centre).
\end{cor}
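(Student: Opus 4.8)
The plan is to deduce the statement directly from Corollary \ref{tcorharmcocpropwm-c} (for the amenable case) and Corollary \ref{tcorharmcocproperg-c} (for the Liouville case), using only the elementary fact that torsion obstructs homomorphisms onto $\zz$. In each case one argues by contraposition: assuming both that some harmonic cocycle is proper and that the relevant centre-type subgroup is infinite, the cited corollary produces a surjection (virtual, resp. genuine) onto $\zz$, which the torsion hypothesis forbids.

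First I would treat the amenable case. Suppose $G$ is finitely generated, amenable and torsion, let $b\in Z^1(G,\pi)$ be a proper harmonic cocycle, and suppose for contradiction that $Z^{FC}(G)$ is infinite. Then Corollary \ref{tcorharmcocpropwm-c} applies and yields that $G$ virtually surjects onto $\zz$. But a finite-index subgroup of a torsion group is again torsion, and a torsion group admits no surjection onto $\zz$: the image of any element has finite order, hence is trivial since $\zz$ is torsion-free, so the image of such a homomorphism is $\{0\}$. This contradiction shows $Z^{FC}(G)$ is finite; equivalently, if $Z^{FC}(G)$ is infinite then no harmonic cocycle is proper, which is the asserted dichotomy.

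The Liouville case is identical with $Z(G)$ in place of $Z^{FC}(G)$. Phrasing the statement relative to the measure $\mu$ for which $G$ is $\mu$-Liouville (and with respect to which $b$ is $\mu$-harmonic), suppose $b$ is proper and $Z(G)$ is infinite; then Corollary \ref{tcorharmcocproperg-c} gives a surjection $G \surj \zz$. Any such surjection factors through the abelianisation $G/[G,G]$, so $G/[G,G]$ surjects onto $\zz$; but $G/[G,G]$ is torsion by hypothesis, and a torsion abelian group admits no surjection onto $\zz$ for the same reason as above. This contradiction forces $Z(G)$ to be finite.

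I do not expect any real obstacle: all the work is contained in the two preceding corollaries, and the only point requiring mild care is to read the Liouville statement with the measure $\mu$ fixed throughout (so that Corollary \ref{tcorharmcocproperg-c} applies verbatim), and to note that "$G$ is torsion" in the amenable case really does rule out \emph{virtual} surjections onto $\zz$, not merely surjections.
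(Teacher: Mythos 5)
Your proof is correct and follows exactly the paper's own argument: contrapose, invoke Corollary \ref{tcorharmcocpropwm-c} (resp.\ Corollary \ref{tcorharmcocproperg-c}) to obtain a virtual surjection (resp.\ surjection) onto $\zz$, and contradict the torsion hypothesis on $G$ (resp.\ on its abelianisation). The extra details you supply (finite-index subgroups of torsion groups are torsion; surjections onto $\zz$ factor through the abelianisation) are exactly the ones the paper leaves implicit.
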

\begin{proof}
If $G$ has an infinite FC-centre (resp. centre), Corollary \ref{tcorharmcocpropwm-c} (resp. Corollary \ref{tcorharmcocproperg-c}) implies $G$ would virtually surject (resp. surject) onto $\zz$. 
This contradicts the fact that $G$ (resp. the abelianisation of $G$) is torsion.
\end{proof}
Shalom \cite[Theorem 1.11.(1) or Theorem 4.3.1]{Shal} also shows that torsion amenable groups may not have property $H_{FD}$ (hence some weakly mixing representations $\pi$ has $\srl{H}^1(G,\pi) \neq 0$). 
The infinite dihedral group $D_\infty = \pgen{a,b \mid a^2=b^2=1}$ is an interesting example in the context of Corollary \ref{tcoproptor-c}: it is Liouville (hence amenable), it has an infinite FC-centre ($Z^{FC}(D_\infty) = \{ (ab)^n\}_{n \in \zz}$) but the centre is trivial, its abelianisation is torsion ($C_2 \times C_2$) and it has a proper harmonic cocycle (let $\pi$ be the representation on $\Hi = \rr$ which send each generator to the inversion $x \mapsto -x$ and $z$ the cocycle defined by $z(a) = -z(b) =  1$).

The authors does not know if there exists an infinite amenable torsion group which has an infinite FC-centre.

%----------------------------------
%----------------------------------
\section{$\ell^p$-cohomology}\label{slpcoh}
%----------------------------------
%----------------------------------

The aim of this section is to prove some results on the vanishing of reduced $\ell^p$ cohomology in degree one. Since most readers are probably unfamiliar with it, it seems natural to begin not only with definitions, but also with a result which shows $\ell^p$-cohomology has implications on Hilbertian representations.

%----------------------------------
\subsection{Preliminaries and applications to unitary representations}\label{sslprem}
%----------------------------------

For a group $H$, let $\lmd_{\ell^p H}$ denote the left-regular representation on $\ell^p H$, \ie the representation coming from the action of $H$ on $X = H$. The associated reduced cohomology is called the reduced $\ell^p$-cohomology. 

A very nice application of reduced $\ell^p$-cohomology in degree one to questions of sphere packings may be found in Benjamini \& Schramm \cite{BS}. Other important applications include problems of quasi-isometries (see Pansu \cite{Pan-rs}), the conformal boundary of hyperbolic spaces (see Bourdon \& Pajot \cite{BP} and Bourdon \& Kleiner \cite{BK}), the critical exponent for some actions (see Bourdon, Martin \& Valette \cite{BMV}), nonlinear potential theory (see Puls \cite{Puls-pharm} and Troyanov \cite{Tro}) and existence of harmonic functions with gradient conditions (see \cite[Theorem 1.2 or Corollary 3.14]{Go} or \cite{Go-cras}). 

For the reader whose interest lies mostly in Hilbert spaces, here is a reason to consider reduced $\ell^p$-cohomology. 
The following result is implicitly mentioned in \cite[\S{}2]{GJ}. 
Recall that a function $f:G \to \cc$ is said to be \textbf{constant at infinity} if it belongs to the linear span of $c_0G$ and the constant function\footnote{ 
In other words, there exists a $c \in \cc$ such that $\forall \eps >0$ the set $G \setminus f^{-1}(B_\eps(c))$ is finite (where $B_\eps(c) = \{ k \in \cc \mid |c-k|<\eps\}$).}.
\begin{cor}\label{tcoefflpharm-c}
Assume $\srl{H}^1(G,\ell^pG) = 0$ for some $p>1$ and $G$ is finitely generated. 
Then, for any $0 < p' \leq p$ and for any unitary representation $\pi$ with finitarily coefficients in $\ell^{p'}$, $\srl{H}^1(G,\pi) =0$.
\end{cor}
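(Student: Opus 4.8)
The plan is to proceed by contraposition and to funnel everything through the results about harmonic cocycles and gradient conditions established in \S\ref{ssharmcoc} and \S\ref{ssgrad}. Suppose $\srl{H}^1(G,\pi)\neq 0$ for some $\pi$ with finitarily coefficients in $\ell^{p'}$ with $p'<p$; I want to contradict $\srl{H}^1(G,\ell^pG)=0$. First I would invoke the isomorphism between reduced cohomology and $\mu$-harmonic cocycles (the corollary after the orthogonal decomposition proposition): fixing a symmetric probability measure $\mu$ supported on a finite generating set $S$, non-vanishing of $\srl{H}^1(G,\pi)$ produces a non-trivial $\mu$-harmonic cocycle $b\in Z^1(G,\pi)$.

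Next I would apply Lemma \ref{tgradcond-l}. Since $b$ is a non-trivial harmonic cocycle and $\pi$ has finitarily coefficients in $\XX=\ell^{p'}$, that lemma yields a vector $\eta\in\Hi$ so that the function $h_\eta(g)=\pgen{b(g)\mid\eta}$ is $\mu$-harmonic on the right-Cayley graph $\cayr(G,S)$, has $\ell^{p'}$-gradient, and is non-constant. (Here one should be mildly careful to record that ``$\ell^{p'}$-gradient'' means each translate $x\mapsto h_\eta(xs)-h_\eta(xs)$... rather $x\mapsto h_\eta(xs)-h_\eta(x)$ lies in $\ell^{p'}G$, which since $S$ is finite is the same as $\nabla h_\eta\in\ell^{p'}E$ for the edge set $E$ of $\cayr(G,S)$.) So we have produced a non-constant $\mu$-harmonic function on $G$ with gradient in $\ell^{p'}$.

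The final step is to feed this into the cited result of \cite[Theorem 1.2 or Corollary 3.14]{Go}: the existence of a non-constant harmonic function with gradient in $\ell^{p'}$ on (a Cayley graph of) $G$ implies $\srl{H}^1(G,\lmd_{\ell^{p'}G})\neq 0$. Combined with the standard monotonicity of reduced $\ell^p$-cohomology in $p$ for groups (or more precisely the fact that non-vanishing at $p'$ propagates upward to any $p>p'$ — this is exactly why the statement is phrased with $p'<p$), this contradicts the hypothesis $\srl{H}^1(G,\lmd_{\ell^pG})=0$. One should note that passing from $\cayr$ to $\cayl$ is harmless since the two Cayley graphs are isomorphic as graphs (via $g\mapsto g^{-1}$) and $\ell^p$-cohomology is a graph invariant.

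The main obstacle, and the only point requiring genuine care rather than bookkeeping, is the interface with \cite{Go}: one must make sure that the precise hypotheses of \cite[Theorem 1.2 / Corollary 3.14]{Go} are met — namely that ``harmonic function with $\ell^{p'}$-gradient, non-constant'' is exactly what is needed to conclude non-triviality of reduced $\ell^{p'}$-cohomology, and that this is insensitive to the choice of finite generating set and of symmetric measure $\mu$ (both of which affect the meaning of ``$\mu$-harmonic'' but not the cohomology). The secondary subtlety is the strictness $p'<p$: reduced $\ell^p$-cohomology is not in general monotone as an equality, but non-vanishing does propagate from smaller to larger exponent for Cayley graphs, which is what licenses deducing $\srl{H}^1(G,\lmd_{\ell^pG})\neq0$ from $\srl{H}^1(G,\lmd_{\ell^{p'}G})\neq0$; this should be stated with a reference (see Remark \ref{rreflpcoh}).
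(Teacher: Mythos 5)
Your proposal follows essentially the same route as the paper: extract a $\mu$-harmonic cocycle from a non-trivial reduced class, use Lemma \ref{tgradcond-l} to produce a non-constant harmonic function $h_\eta$ with $\ell^{p'}$-gradient, and feed this into \cite[Theorem 1.2 or Corollary 3.14]{Go}. The only cosmetic difference is in how the citation is applied: the paper uses it in the direct form ``$\srl{H}^1(G,\lmd_{\ell^pG})=0$ implies that harmonic functions with gradient in $\ell^{p'}$, $p'<p$, are constant at infinity'' and finishes with the maximum principle, so the exponent gap $p'<p$ is already built into the quoted statement and no separate appeal to monotonicity of reduced $\ell^p$-cohomology in the exponent is needed; your contrapositive-plus-monotonicity packaging is an equivalent reformulation.

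The one point you leave genuinely undischarged is precisely the one you flag: the hypotheses of the cited result from \cite{Go} are not satisfied by every finitely generated group (nilpotent groups in particular must be excluded), so the implication ``non-constant harmonic with $\ell^{p'}$-gradient $\Rightarrow$ non-trivial reduced $\ell^{p'}$-cohomology'' cannot simply be quoted for all $G$ in the statement. The paper disposes of this case first: a representation with finitarily coefficients in $\ell^{p'}$ of an infinite group is automatically ergodic (an invariant vector $\xi$ would force some coefficient $\kappa_{\eta,\xi}$ to be a non-zero constant lying in $\ell^{p'}$, which is impossible), so for nilpotent $G$ Guichardet's theorem (Corollary \ref{tguich-c}) already gives $\srl{H}^1(G,\pi)=0$, and one may then assume $G$ is not nilpotent before invoking \cite{Go}. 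You should carry out this reduction explicitly rather than leave it as a hypothesis check; with it added, your argument matches the paper's.
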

\begin{proof}
By Guichardet \cite[Th\'eor\`eme 7 in \S{}8]{Gu} (see also Corollary \ref{tguich-c}), one may assume $G$ is not nilpotent (such groups would otherwise need to be considered, see Remark \ref{rreflpcoh}.1). 
Let $b \in \srl{H}^1(G,\pi)$ be a harmonic cocycle.
If $b$ is not trivial, Lemma \ref{tgradcond-l} and Remark \ref{rvirtcobharm} imply there is a non-constant harmonic function $h_\eta:G \to \cc$ which has gradient in $\ell^{p'}$. 
By \cite[Theorem 1.2 or Corollary 3.14]{Go}, 
the existence of such a functions implies that $\srl{H}^1(G,\ell^{p'}G) \neq 0$ and this, in turn, implies that $\srl{H}^1(G,\ell^pG) \neq 0$; a contradiction.
\end{proof}
This will allow us to show that, for many groups, representations with finitarily coefficients in $\ell^p$ have trivial reduced cohomology. Note that the converse of Corollary \ref{tcoefflpharm-c} is false: \.Zuk \cite[Theorem 3 and 4]{Zuk} showed there are hyperbolic groups (so $\srl{H}^1(G,\lmd_{\ell^pG}) \neq 0$ for some $p$, see \eg \cite{BP}) with property (T) (so $H^1(G,\pi)=0$ for any unitary representation).

\begin{rmk}\label{rreflpcoh}
It is known that the reduced $\ell^p$-cohomology is trivial in degree 1 for the following groups ($1<p<\infty$):
\begin{enumerate}\renewcommand{\itemsep}{-1ex} \renewcommand{\labelenumi}{\bf \arabic{enumi}.}
 \item $G$ has an infinite FC-centre (see Kappos \cite[Theorem 6.4]{Kappos}, Martin \& Valette \cite[Theorem 4.3]{MV}, Puls \cite[Theorem 5.3]{Puls-Can}, Tessera \cite[Proposition 3]{Tes} or \cite[Theorem 3.2]{Go-trans})
 \item $G$ has a finitely supported measure with the Liouville property, \ie no bounded $\mu$-harmonic functions (see \cite[Theorem 1.2 or Corollary 3.14]{Go}). This includes all polycyclic groups (for such groups, see also Tessera \cite{Tes})
 \item $G$ is a direct product of two infinite finitely generated groups (see \cite[Corollary 3]{Go-cras}). 
 \item $G$ is a wreath product with infinite base group (see \cite[Proposition 1]{Go-cras} and Martin \& Valette \cite[Theorem.(iv)]{MV}) unless the base group has infinitely many ends and the lamp group is amenable. Arguments from Georgakopoulos \cite{Geo} show that this also holds for finite lamp groups (even if the base group has infinitely many ends).
 \item $G$ is some specific type of semi-direct product $N \rtimes H$ with $N$ not finitely generated (see \cite{Go-lpharm} for the full hypothesis).
\end{enumerate}
It is also trivial in any amenable group for any $1<p \leq 2$ (see \cite{Go}).

On the other hand it is non-trivial (for some $p <+\infty$) in all hyperbolic groups (see Bourdon \cite{Bourdon-hyp}, Bourdon \& Pajot \cite{BP}, \'Elek \cite{Elek}, Gromov \cite[p.258]{Gro}, Pansu \cite{Pan} or Puls \cite[Corollary 1.4]{Puls-floyd}), some groups without free subgroups of rank $2$ and some torsion groups of infinite exponent (see Osin \cite{Osin-tor}).

The reduced $\ell^1$-cohomology in degree one is non-trivial if and only if the group has $\geq 2$ ends (see \cite[Appendix A]{Go}).
\end{rmk}
Before moving on, let us note that in Corollary \ref{tcoefflpharm-c}, $G$ needs not necessarily be finitely generated.
Indeed, if $G$ is not finitely generated, it suffices to prove the statement for any finitely generated subgroup. This is due to the following lemma (this version is taken from Martin \& Valette \cite[Lemma 2.5]{MV}):
\begin{lem}
Assume $G$ is a countable group given as a union of finitely generated groups $G = \cup_i G_i$.
Let $b \in Z^1(G,\pi)$ for some representation $\pi$ of $G$ on the Banach space $\BB$. Then:
$\forall i, b_{|G_i} \in \srl{B}^1(G_i,\pi)$ 
implies $b \in \srl{B}^1(G,\pi)$
which in turn implies that for all $G'<G$ finitely generated, $b_{|G'} \in \srl{B}^1(G,\pi)$.
\end{lem}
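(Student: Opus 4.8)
The plan is to first pin down the relevant topology on $Z^1(G,\pi)$, then run a diagonal approximation argument; the first implication carries all the (minimal) content, and the second is immediate.

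Recall that $\srl{B}^1(G,\pi)$ is by definition the closure of $B^1(G,\pi)$ inside $Z^1(G,\pi)$ for the topology of uniform convergence on finite subsets of $G$. Since $G$ is countable, this topology is metrisable: it is generated by the countable family of seminorms $b \mapsto \max_{g \in F}\|b(g)\|_\BB$ over finite $F \subset G$, so convergence is nothing but pointwise convergence in $\BB$, and it suffices to argue with sequences. For the proof I would take the $G_i$ to form an increasing exhausting chain $G_1 \subset G_2 \subset \cdots$ with $\bigcup_i G_i = G$, which is the situation in which the lemma is applied (for instance $G_i = \langle$ first $i$ elements in an enumeration of $G\rangle$).

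For the first implication, fix an enumeration $G = \{\gm_1,\gm_2,\ldots\}$ and set $F_n = \{\gm_1,\ldots,\gm_n\}$. As each $F_n$ is finite there is an index $m(n)$ with $F_n \subset G_{m(n)}$, and the hypothesis $b_{|G_{m(n)}} \in \srl{B}^1(G_{m(n)},\pi_{|G_{m(n)}})$ provides a vector $\xi_n \in \BB$ with $\|(\pi(g)-1)\xi_n - b(g)\|_\BB < 1/n$ for all $g \in F_n$. The key point — the one thing one must not overlook — is that for any $\xi \in \BB$ the assignment $g \mapsto (\pi(g)-1)\xi$ is a coboundary of the \emph{whole} group $G$, not merely of $G_{m(n)}$; hence $d\xi_n \in B^1(G,\pi)$. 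Fixing $g = \gm_k$, for every $n \geq k$ we get $\|(d\xi_n)(g) - b(g)\|_\BB < 1/n$, so $d\xi_n \to b$ pointwise on $G$, and therefore $b \in \srl{B}^1(G,\pi)$.

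The second implication is immediate: if $d\xi_n \to b$ pointwise on $G$, then for any subgroup $G' < G$ the restrictions $(d\xi_n)_{|G'}$ are coboundaries of $G'$ converging pointwise on $G'$ to $b_{|G'}$, so $b_{|G'} \in \srl{B}^1(G',\pi_{|G'})$ (no finite-generation hypothesis is in fact needed for this half; it is stated that way only because that is how it gets used). I do not expect a genuine obstacle anywhere: the only points requiring a line of care are the reduction to an increasing chain and the observation that the coboundaries approximating each $b_{|G_i}$ are automatically coboundaries of $G$, so that they can be recycled verbatim in the diagonal argument.
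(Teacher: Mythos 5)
Your proof is correct and is essentially the standard argument (the paper gives no proof of this lemma, deferring to Martin \& Valette): the topology on $Z^1$ is pointwise convergence in $\BB$, a coboundary of $G_{m(n)}$ is automatically a coboundary of all of $G$, and a diagonal extraction along an exhaustion by finite sets does the rest. Your restriction to an increasing chain is not merely a convenience but a genuine necessity: for an arbitrary union of finitely generated subgroups the first implication is false (take $G=F_2$ written as the union of its cyclic subgroups and $\pi=\lambda_{\ell^2 F_2}$; every restriction to a cyclic subgroup is trivial in reduced cohomology, yet $\srl{H}^1(F_2,\lambda_{\ell^2 F_2})\neq 0$), so the hypothesis actually used is that every finite subset of $G$ lies in some $G_i$.
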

% This means that $\srl{H}^1(G_i,\pi) =0$ implies $\srl{H}^1(G,\pi) =0$.
% \begin{proof}
% Assume $K$ is a finite subset of $G$ and $\eps > 0$. 
% $K$ lies inside a finite union $G_{i_1} \cup \ldots \cup G_{i_n}$.
% So there is some $i \geq i_1,\ldots,i_n$ so that $K \subset G_i$.
% Since $b_{|G_i} \in \srl{B}^1(G_i,\pi)$, there is an element $\xi \in \BB$ so that $\| b(g) - \pi(g)v +v\|_{\BB} < \eps$ for all $g \in K$.
% By definition of the topology, one gets that $b \in \srl{B}^1(G,\pi)$.
% 
% Assume that for some finitely generated $G'<G$, and $b \in \srl{B}^1(G,\pi)$. 
% Taking $K$ to be a finite generating set of $G'$ one can find for any $\eps$ a $v$ such that $\| b(g) - \pi(g)v +v\|_{\BB} < \eps$ for all $g \in K$.
% This implies that $b_{|G'} \in \srl{B}^1(G,\pi)$.
% \end{proof}
Combined with Martin \& Valette \cite[Proposition 2.6]{MV}, one gets ($G$ is here always assumed countable)
\begin{prop}\label{tcorMV-c}
The following are equivalent:
\begin{itemize}\renewcommand{\itemsep}{-1ex}
 \item $\srl{H}^1(G,\lmd_{\ell^pG})=0$.
 \item For any finitely generated $H<G$, $\srl{H}^1(H,\lmd_{\ell^pH})=0$. 
 \item For some increasing sequence of finitely generated subgroups $\{H_i\}$, $G =  \cup_i H_i$ and $\srl{H}^1(H_i,\lmd_{\ell^pH_i})=0$.
\end{itemize}
\end{prop}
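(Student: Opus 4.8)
Denote by $(1)$, $(2)$, $(3)$ the three bulleted conditions in the order displayed. The plan is to prove the cycle $(1)\Rightarrow(2)\Rightarrow(3)\Rightarrow(1)$. Two of these are almost formal. For $(2)\Rightarrow(3)$: since $G$ is countable, enumerate $G=\{g_1,g_2,\dots\}$ and set $H_i=\pgen{g_1,\dots,g_i}$; each $H_i$ is finitely generated, so $(2)$ yields $\srl{H}^1(H_i,\lmd_{\ell^pH_i})=0$. For $(1)\Rightarrow(2)$: this is the statement that vanishing of degree-one reduced $\ell^p$-cohomology passes to finitely generated subgroups, which is Martin \& Valette \cite[Proposition 2.6]{MV}. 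Its mechanism is the $H$-equivariant isometric splitting of the left-regular representation along right cosets,
\[
\lmd_{\ell^pG}|_H\;\cong\;\bigoplus_{Hg\in H\backslash G}^{(\ell^p)}\lmd_{\ell^p(Hg)},\qquad\lmd_{\ell^p(Hg)}\cong\lmd_{\ell^pH},
\]
in which the summand indexed by the coset $H$ is the honest $H$-submodule $\ell^pH\subset\ell^pG$ of functions supported on $H$.

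The substantive step is $(3)\Rightarrow(1)$, and this is where the Lemma above enters. From the splitting (again via \cite[Proposition 2.6]{MV}) one obtains, for any finitely generated $H<G$, the coefficient identity
\[
\srl{H}^1(H,\lmd_{\ell^pG}|_H)=0\qquad\Longleftrightarrow\qquad\srl{H}^1(H,\lmd_{\ell^pH})=0 .
\]
Assume now $(3)$, say $G=\cup_iH_i$ with each $H_i$ finitely generated and $\srl{H}^1(H_i,\lmd_{\ell^pH_i})=0$, and let $b\in Z^1(G,\lmd_{\ell^pG})$ be arbitrary. By the coefficient identity, $\srl{H}^1(H_i,\lmd_{\ell^pG}|_{H_i})=0$ for every $i$, hence $b_{|H_i}\in\srl{B}^1(H_i,\lmd_{\ell^pG}|_{H_i})$. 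The Lemma above, applied with $\pi=\lmd_{\ell^pG}$ and $G_i=H_i$, then gives $b\in\srl{B}^1(G,\lmd_{\ell^pG})$. Since $b$ was arbitrary, $\srl{H}^1(G,\lmd_{\ell^pG})=0$, which is $(1)$.

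The main obstacle is the coefficient identity itself: one must control the closure defining reduced cohomology through the $\ell^p$-direct sum over the (possibly infinitely many) cosets in $H\backslash G$, i.e.\ see that the reduced cohomology of this $\ell^p$-sum of copies of $\lmd_{\ell^pH}$ is governed by that of $\lmd_{\ell^pH}$ alone. The point is that finite generation of $H$ forces, for each generator $s$, all but finitely many coset-components of $b_{|H}(s)$ to be small, so the tail is harmless; this is the technical heart already carried by \cite[Proposition 2.6]{MV}. The role of the Lemma is only to promote the per-$H_i$ conclusions to a statement over all of $G$, and the remaining bookkeeping — existence of the exhaustion, and the fact that restriction of cocycles and of coboundaries behaves as expected — is routine.
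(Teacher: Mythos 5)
Your steps (2)$\Rightarrow$(3) and (3)$\Rightarrow$(1) are correct and are precisely the paper's (implicit) route: the coset splitting of $\lmd_{\ell^pG}$ restricted to a finitely generated subgroup gives the coefficient identity of Martin \& Valette \cite[Proposition 2.6]{MV}, and the Lemma preceding the proposition promotes the per-$H_i$ conclusions to $G$. The problem is the remaining edge of your cycle, (1)$\Rightarrow$(2).

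Martin \& Valette's Proposition 2.6 compares, for one fixed subgroup $H$, the two coefficient modules $\lmd_{\ell^pG}|_H$ and $\lmd_{\ell^pH}$; it is not an inheritance statement from $G$ down to its subgroups, and the splitting mechanism you describe cannot produce one. All that (1) gives you (via the Lemma and the splitting) is that the restriction of every $G$-cocycle to $H$ lies in $\srl{B}^1(H,\lmd_{\ell^pG}|_H)$, i.e.\ that the restriction map $\srl{H}^1(G,\lmd_{\ell^pG})\to\srl{H}^1(H,\lmd_{\ell^pG}|_H)$ is zero; since an $H$-cocycle need not extend to a $G$-cocycle, this says nothing about $\srl{H}^1(H,\lmd_{\ell^pH})$. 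In fact the implication (1)$\Rightarrow$(2) is false: take $H=F_2<G=F_2\times\zz$. By Remark \ref{rreflpcoh}.1 (or .3), $\srl{H}^1(G,\lmd_{\ell^pG})=0$ for every $1<p<\infty$, whereas $\srl{H}^1(F_2,\lmd_{\ell^pF_2})\neq 0$ for every such $p$: for $p=2$ this is the non-vanishing of the first $\ell^2$-Betti number of $F_2$, and in general the indicator function of a half-tree in the Cayley tree of $F_2$ has gradient supported on a single edge yet is not constant at infinity, hence (as $F_2$ has exponential growth) represents a non-trivial class by Corollary \ref{tbndval-c}. So no argument can close this step; the three-way ``equivalence'' as stated overreaches, and what the paper's one-line proof (the Lemma combined with \cite[Proposition 2.6]{MV}) actually yields --- and what your proposal correctly establishes --- is (2)$\Rightarrow$(3)$\Rightarrow$(1), together with the trivial (1)$\Rightarrow$(3) when $G$ itself is finitely generated.
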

When one considers the $\ell^p$-cohomology, the space of virtual coboundaries is also called the space of $p$-Dirichlet functions $\D^pG := \D^p(G,\lmd_{\ell^pG})$, \ie the space of functions $f:G \to \cc$ so that $\nabla f \in \ell^pE$ (where $E$ are the edges of $\cayl(G,S)$). 
The norm of a cocycle (as introduced in \S{}\ref{ssbanachrep}) is the $\ell^p$-norm of $\nabla f$.
Henceforth, it will be referred to as the $\D^pG$-norm.
See Martin \& Valette \cite[\S{}3]{MV} or Puls \cite{Puls-Can,Puls-harm}  for more background.

It is fairly classical that the coboundary map $d:\ell^pG \to B^1(G,\lmd_{\ell^pG})$ has closed image if and only if the group is not amenable (see Guichardet \cite[Th\'eor\`eme 1 and Corollaire 1]{Gu}). 
In particular, $H^1(G,\lmd_{\ell^pG})=0$ implies $G$ is non-amenable.
% The reader is also reminded of the remarkable result of M.~Gromov: there is a $d$ such that a group has growth at most polynomial of degree $d$ if and only if the group is virtually nilpotent.

%----------------------------------
\subsection{Triviality and values at infinity}\label{ssbndval}
%----------------------------------

Let us now present an improvement of a result of \cite[Lemmas 3.1 and 3.9]{Go} showing that (under a growth hypothesis) functions in $\D^pG$ corresponding to the trivial class are exactly those which  are constant at infinity. The improvement is not major (relaxes the hypothesis on growth) but it makes for a good opportunity to present this important ingredient of the upcoming proof.
Some concepts from nonlinear potential theory will also come in handy.
\begin{dfn}
Let $(X,E)$ be an infinite connected graph. The \textbf{inverse $p$-capacity}\footnote{One might also like to call it the ``$p$-resistance to $\infty$''.} of a vertex $x \in X$ is 
\[
\cpc_p(x) := \big( \inf \{ \| \nabla f\|_{\ell^pE} \mid f:X \to \cc \text{ is finitely supported and } f(x) = 1 \} \big)^{-1}
\]
The graph is called $p$\textbf{-parabolic} if $\cpc_p(x) =+\infty$ for some $x \in X$. 
A graph is called $p$-\textbf{hyperbolic} if it is not $p$-parabolic.
\end{dfn}
Recall (see Holopainen \cite{Holo}, Puls \cite{Puls-pharm} or Yamasaki \cite{Yama}) that if $\cpc_p(x_0) =0$ for some $x_0$ then $\cpc_p(x) =0$ for all $x \in X$. Recall also that $2$-parabolicity is equivalent to recurrence.
\begin{rmk}\label{rKLSW}
~\\[-1.9em]
\begin{enumerate}\renewcommand{\itemsep}{-1ex} \renewcommand{\labelenumi}{\bf \arabic{enumi}.}
\item If the graph $(X,E)$ is vertex-transitive, $\cpc_p(x) = \cpc_p(y)$ for all $x,y \in X$. 
Let $\cpc_p (X) := \cpc_p(x)$ be this constant. 
It is also easy to see that if the automorphism group acts co-compactly on the graph, the inverse $p$-capacity is bounded from below.

\item Note that in the definition of $p$-capacity, one may also assume that the functions take value only in $\rr_{\geq 0}$. 
Indeed, looking at $|f|$ instead of $f$ reduces the norm of the gradient. 
Likewise, one can even assume $f$ takes value only in $[0,1]$ as truncating $f$ at values larger than $1$ will again reduce the norm of the gradient. \qedhere
\end{enumerate}
\end{rmk}
The following proposition is an adaptation of a result of Keller, Lenz, Schmidt \& Wojchiechowski \cite[Theorem 2.1]{KLSW}. 
\begin{prop}
Assume $G$ is a finitely generated group with growth at least polynomial of degree $d$ and $p<d$. If $f \in \D^pG$ represents a trivial class in reduced $\ell^p$-cohomology, then $f$ is constant at infinity.

Furthermore, $c_0G \subset \D^pG$ and $\forall f \in c_0G,  \|f\|_\infty \leq \cpc_p(G,S) \|\nabla f\|_p$.
\end{prop}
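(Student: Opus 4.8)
The plan is to reformulate everything in terms of $\D^pG$. The virtual-coboundary picture of \S\ref{ssvirtcob} identifies $Z^1(G,\lmd_{\ell^pG})$ with $\D^pG$ modulo constants and $B^1$ with $\ell^pG\subset\D^pG$; since the $\|\nabla\cdot\|_p$-norm induces the topology of uniform convergence on compacts (\S\ref{ssbanachrep}) and finitely supported functions are $\|\nabla\cdot\|_p$-dense in $\ell^pG$, the closure $\srl{B}^1$ corresponds to $\D_0^pG$, the $\|\nabla\cdot\|_p$-closure of the space $c_c(G)$ of finitely supported functions. Hence ``$f$ represents a trivial class'' means $f=f_0+c$ with $f_0\in\D_0^pG$ and $c$ a constant, so it suffices to prove (i) the displayed inequality for every $f\in c_0G$, and (ii) $\D_0^pG\subseteq c_0G+\cc$ (then a trivial-class $f$ is constant at infinity). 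For (i) one may assume $\nabla f\in\ell^pE$ (otherwise the right-hand side is $+\infty$) and, replacing $f$ by $|f|$ and truncating it between two levels (Remark \ref{rKLSW}.2), that $0\le f\le\|f\|_\infty$.

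The elementary core is the bound $\|g\|_\infty\le\cpc_p(G,S)\,\|\nabla g\|_p$ for finitely supported $g$: if $g\neq 0$, pick a vertex $x$ with $|g(x)|=\|g\|_\infty$; then $g/g(x)$ is finitely supported and equal to $1$ at $x$, so by definition of the inverse $p$-capacity $\cpc_p(x)^{-1}\le\|\nabla g\|_p/\|g\|_\infty$. As $G$ acts transitively on $\cayl(G,S)$ by graph automorphisms, $\cpc_p(x)$ does not depend on $x$ and equals $\cpc_p(G,S)$ (Remark \ref{rKLSW}.1), and the growth hypothesis makes $\cayl(G,S)$ $p$-hyperbolic, so $\cpc_p(G,S)<\infty$ --- concretely via the $\ell^p$-Sobolev inequality $\|g\|_{\ell^{dp/(d-p)}}\le C\|\nabla g\|_{\ell^p}$ afforded by $d$-dimensional isoperimetry (when $\cpc_p(G,S)=+\infty$ both assertions are vacuous).

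Now comes the point borrowed from \cite[Theorem 2.1]{KLSW}. Given $f\in\D_0^pG$, choose finitely supported $f_n$ with $\|\nabla(f_n-f)\|_p\to 0$. Applying the bound of the previous paragraph to the finitely supported functions $f_n-f_m$ gives $\|f_n-f_m\|_\infty\le\cpc_p(G,S)\,\|\nabla(f_n-f_m)\|_p\to 0$, so $(f_n)$ is Cauchy in $\|\cdot\|_\infty$ and converges uniformly to some $g$; being a uniform limit of finitely supported functions, $g\in c_0G$. Since $\nabla f_n\to\nabla f$ in $\ell^pE$ and $\nabla f_n\to\nabla g$ pointwise, $\nabla f=\nabla g$, whence $f=g+c$ with $c$ constant. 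This establishes (ii), hence the first assertion; moreover $\|g\|_\infty=\lim_n\|f_n\|_\infty\le\cpc_p(G,S)\lim_n\|\nabla f_n\|_p=\cpc_p(G,S)\,\|\nabla f\|_p$.

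It remains to upgrade the inequality from $\D_0^pG$ to all of $c_0G$, \ie to show that $f\in c_0G$ with $\nabla f\in\ell^pE$ already lies in $\D_0^pG$ (then the previous paragraph applies, and since $f$ and its uniform limit $g$ both lie in $c_0G$ the constant vanishes). Here the growth hypothesis is used through the Sobolev inequality, which first puts $f$ in $\ell^q(G)$ with $q=dp/(d-p)$; then one approximates $f$ by $f\eta_n$, where $\eta_n$ equals $1$ on $B_n$, falls linearly to $0$ over $B_{2n}\setminus B_n$, and is $0$ outside. Writing $f\eta_n-f=f(\eta_n-1)$ and expanding the gradient, the ``bulk'' term is dominated by $\|\nabla f\|_{\ell^p(E\setminus E(B_n))}\to 0$, while the ``boundary'' term $\|f\,\nabla\eta_n\|_p$ is at most $\tfrac1n$ times $\|f\|_{\ell^p(B_{2n}\setminus B_n)}$, which by H\"older together with $|B_{2n}|\lesssim n^d$ is $\lesssim \tfrac1n\,\|f\|_{\ell^q(B_n^{\comp})}\,|B_{2n}|^{1/p-1/q}\lesssim\|f\|_{\ell^q(B_n^{\comp})}\to 0$, the powers of $n$ cancelling exactly since $1/p-1/q=1/d$. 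I expect this matching of exponents in the boundary estimate --- the place where the polynomial growth lower bound is genuinely needed, and which fails on, say, $\zz$ --- to be the main technical point of the proof; everything else is the $c_c$-density bookkeeping above and the capacity observation of the second paragraph.
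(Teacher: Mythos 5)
Your treatment of the first assertion is correct and is essentially the paper's argument: the pointwise bound $|g(x)|\le \cpc_p(x)\,\|\nabla g\|_p$ for finitely supported $g$ straight from the definition of the inverse $p$-capacity, vertex-transitivity to remove the dependence on $x$, $p$-hyperbolicity from the growth hypothesis via the isoperimetric/Sobolev inequality, and then the identification of the $\D^p$-closure of the finitely supported functions with a subspace of $c_0G+\cc$. Your Cauchy-in-$\|\cdot\|_\infty$ formulation of that last step is in fact slightly more careful than the paper's, which passes directly to the limit.

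The gap is in your final paragraph, where you upgrade the inequality to all of $c_0G$. Your boundary estimate for the cutoffs $\eta_n$ is
\[
\tfrac1n\,\|f\|_{\ell^p(B_{2n}\setminus B_n)}\;\le\;\tfrac1n\,\|f\|_{\ell^q(B_n^{\comp})}\,|B_{2n}|^{1/p-1/q},
\]
and you then invoke $|B_{2n}|\lesssim n^d$ to cancel the $1/n$. But the hypothesis is that the growth is at \emph{least} polynomial of degree $d$, i.e.\ a \emph{lower} bound $|B_n|\gtrsim n^d$; no upper bound on ball volumes is available, and in the situations where the proposition is actually applied (Theorem \ref{tqnorlpcoh-t}, Corollary \ref{tsolvlp-c}, \dots) the group typically has exponential growth, so that $|B_{2n}|^{1/p-1/q}=|B_{2n}|^{1/d}$ grows exponentially and your boundary term does not tend to $0$. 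The argument fails precisely at the step you flagged as the crux. The detour is also unnecessary: for $f\in c_0G$ the paper sets $g_\eps=f-f_\eps$ with $f_\eps$ the radial truncation at level $\eps$. Then $g_\eps$ is \emph{finitely supported} (the set $\{|f|>\eps\}$ is finite), $\|\nabla g_\eps\|_p\le\|\nabla f\|_p$ because truncation is $1$-Lipschitz, and $\|f\|_\infty\le\eps+\|g_\eps\|_\infty\le\eps+\cpc_p(G,S)\,\|\nabla f\|_p$; letting $\eps\to 0$ gives the ``Furthermore''. The same truncation, together with dominated convergence ($\nabla f_\eps\to 0$ pointwise and $|\nabla f_\eps|\le|\nabla f|\in\ell^pE$), shows $g_\eps\to f$ in $\D^pG$ and hence proves your intermediate claim that such an $f$ lies in $\D^p_0G$ --- no Sobolev embedding, no $\ell^q$ membership and no volume count are needed anywhere in this half of the proposition.
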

\begin{proof}
A consequence of the Sobolev embedding corresponding to $d$-dimensional isoperimetry is that groups of growth of at least polynomial of degree $d >p$ have $p$-hyperbolic Cayley graphs. See Troyanov \cite[\S{}7]{Tro} as well as Woess' book \cite[\S{}4 and \S{}14]{Woe} and references therein for details.

As $\cayl(G,S)$ is $p$-hyperbolic and by Remark \ref{rKLSW}.2, one has
$
\forall f \text{ of finite support } |f(x)| \leq \cpc_p(x) \|\nabla f\|_p.
$
However, by Remark \ref{rKLSW}.1, there is no dependence on $x$ on the right-hand side.
So 
$
\forall x \in G, \forall f \text{ of finite support } |f(x)| \leq \cpc_p \|\nabla f\|_p
$
where $\cpc_p$ is $\cpc_p \big(\cayl(G,S) \big)$ 
Trivially this implies 
\[
\forall f:G \to \cc \text{ of finite support } \|f\|_\infty \leq  \cpc_p \|\nabla f\|_p 
\]
As a first consequence, 
assume $f_n \overset{\D^p}\to f $ with $f_n$ finitely supported. 
Then $f_n$ also converge to $f$ in $\ell^\infty G$. 
Since $c_0G$ is the closure of finitely supported functions in $\ell^\infty G$, this shows that $f \in \srl{\ell^pG}^{\D^p}$ implies $f \in c_0G$.
In other words, if $f$ represents a trivial class in reduced $\ell^p$-cohomology, then $f$ is constant at infinity.

As a second consequence, let us show the ``Furthermore''. 
Pick some $f \in c_0G$. Apply the inequality to $g_\eps = f-f_\eps$ where $f_\eps$ is the truncation of $f$:
\[
f_\eps(x) = \left\{\begin{array}{ll}
\eps f(x)/|f(x)| & \text{if } |f(x)| > \eps\\
f(x) & \text{ else.}
\end{array}\right.
\]
Indeed, $g_\eps$ is finitely supported so it satisfies $\|g_\eps\|_\infty \leq \cpc_p \|\nabla g_\eps\|_p$ (recall that $\cpc_p = \cpc_p\big(\cayl(G,S)\big)$).
Also $\|\nabla g_\eps\|_p \leq \|\nabla f\|_p$ and $\|f\|_\infty \leq \eps+ \|g_\eps\|_\infty$. 
Hence $\|f\|_\infty \leq \eps + \cpc_p\|\nabla f\|_p$ and the conclusion follows by letting $\eps \to 0$.
\end{proof}
The above proposition gives the following very nice characterisation of virtual cocycles corresponding to the trivial class. It improves a result of \cite[Lemmas 3.1 and 3.9]{Go} by requiring only $d>p$ instead of $d >2p$.
\begin{cor}\label{tbndval-c}
Assume $G$ is a finitely generated group with growth at least polynomial of degree $d$ and assume $d>p$. $f \in \D^pG$ represents a trivial class in reduced $\ell^p$-cohomology if and only if $f$ is constant at infinity.
\end{cor}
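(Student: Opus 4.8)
The forward implication is nothing new: it is precisely the preceding Proposition, which asserts that any $f\in\D^pG$ representing the trivial reduced class is constant at infinity, and whose proof rests on the $p$-hyperbolicity of $\cayl(G,S)$ guaranteed by $d>p$ (that is where the improvement from $d>2p$ to $d>p$ lives). So the plan is to prove only the converse: if $f\in\D^pG$ is constant at infinity, then $[f]=0$ in $\srl{H}^1(G,\lmd_{\ell^pG})$. First I would reduce to a clean situation. By definition of ``constant at infinity'' we may write $f=\alpha\un_G+f_0$ with $\alpha\in\cc$ and $f_0\in c_0G$. A constant function has vanishing gradient, so $\nabla f_0=\nabla f\in\ell^pE$, whence $f_0\in\D^pG\cap c_0G$, and $f$ and $f_0$ define the same class. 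Thus it suffices to show $f_0\in\srl{\ell^pG}^{\D^p}$, i.e. to approximate $f_0$ in $\D^p$-norm by finitely supported functions.

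The approximation I would use is a radial truncation. For $\eps>0$ let $t_\eps:\cc\to\cc$ be the nearest-point projection onto the closed disc of radius $\eps$, i.e. $t_\eps(z)=z$ for $|z|\le\eps$ and $t_\eps(z)=\eps z/|z|$ otherwise; being a projection onto a convex set, it is $1$-Lipschitz. Put $g_\eps=f_0-t_\eps\circ f_0$. Then $g_\eps$ is supported on $\{x\in G\mid|f_0(x)|>\eps\}$, which is finite because $f_0\in c_0G$, so $g_\eps$ has finite support. For each fixed $x$ one has $g_\eps(x)\to f_0(x)$ as $\eps\to0$, hence $\nabla g_\eps\to\nabla f_0$ pointwise on $E$. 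Moreover, for every edge $(x,y)$,
\[
|\nabla g_\eps(x,y)|\le|\nabla f_0(x,y)|+|t_\eps(f_0(x))-t_\eps(f_0(y))|\le 2\,|\nabla f_0(x,y)|,
\]
using the Lipschitz bound on $t_\eps$. Since $2|\nabla f_0|\in\ell^pE$, dominated convergence on $E$ (with the counting measure) gives $\|\nabla g_\eps-\nabla f_0\|_{\ell^pE}\to0$, i.e. $g_\eps\to f_0$ in $\D^pG$. As $g_\eps$ is finitely supported, $f_0\in\srl{\ell^pG}^{\D^p}$, so $[f]=[f_0]=0$, which is the desired converse.

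Since the substantial content is carried by the preceding Proposition, the converse above presents no real obstacle, and in particular needs no growth hypothesis whatsoever. The one point that requires a little care is obtaining a \emph{uniform} $\ell^p$-domination of $\nabla g_\eps$, so that dominated convergence can be invoked; this is exactly why one truncates radially ($t_\eps$ is $1$-Lipschitz) rather than, say, multiplying $f_0$ by a ball indicator smoothed across an annulus. The latter would leave an error term of order $\big(\sup_{x\notin B_n}|f_0(x)|\big)\,|B_{2n}|^{1/p}/n$, which need not tend to $0$ when $G$ has exponential growth, whereas the projection-based truncation sidesteps the volume of the annuli entirely.
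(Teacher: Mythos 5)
Your argument is correct and is essentially the paper's own proof: the forward direction is delegated to the preceding Proposition, and the converse uses exactly the same radial truncation $g_\eps = f - f_\eps$ (your $t_\eps$ is the paper's $f_\eps$), after normalising the value at infinity to $0$. The only difference is that you spell out the ``one can check that $g_\eps \overset{\D^p}\to f$'' step via the $1$-Lipschitz property of the projection and dominated convergence, which is a welcome (and correct) elaboration.
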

\begin{proof}[Quick proof]
Here is a quick version of the proof (which can be found as \cite[Lemma 3.1]{Go}). 
Without loss of generality the constant at infinity is $0$ (because one may add a constant function to $f$).
Considering again $g_\eps = f-f_\eps$ (where $f_\eps$ is the truncation of $f$), one can check that, as $\eps \to 0$,  $g_\eps \overset{\D^p}\to f$. 
Since $g_\eps$ is finitely supported, it is in $\ell^pG$ (and this concludes the proof).
\end{proof}

As in Keller, Lenz, Schmidt \& Wojchiechowski \cite{KLSW}, say that the graph $(X,E)$ is \textbf{uniformly $p$-hyperbolic} if $\cpc_p(X,E) := \supp{x \in X} \cpc_p(x)$ is finite. 
Using the arguments from \cite{Go}, one can show:
\begin{lem}\label{tunifpar-l}
If $(X,E)$ is a graph of bounded valency with $d$-dimensional isoperimetry and $d>2p$, then $(X,E)$ is uniformly $p$-hyperbolic.
\end{lem}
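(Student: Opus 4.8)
By definition, $(X,E)$ is uniformly $p$-hyperbolic exactly when $\cpc_p(X,E)=\sup_{x\in X}\cpc_p(x)<\infty$, i.e. when there is a constant $C$, \emph{independent of the base vertex}, such that $\|\nabla f\|_{\ell^p E}\geq C^{-1}$ for every finitely supported $f:X\to\cc$ with $f(x)=1$ for some $x\in X$. Since $f(x)=1$ forces $\|f\|_{\ell^\infty}\geq 1$, it is enough to produce a uniform Sobolev-type inequality
\[
\|f\|_{\ell^\infty}\ \leq\ C\,\|\nabla f\|_{\ell^p E}\qquad\text{for all finitely supported }f:X\to\cc,
\]
with $C$ depending only on $p$, on the valency bound $D$ and on the isoperimetric constant. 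So the whole proof reduces to establishing such an inequality from $d$-dimensional isoperimetry; this is the content of the arguments of \cite{Go}.

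First I would record the $\ell^1$-isoperimetric (Sobolev) inequality. For a non-negative finitely supported $f$, writing $f=\int_0^\infty \mathbbold{1}_{\{f>t\}}\,\mathrm dt$ and using the triangle inequality for $\|\cdot\|_{\ell^{d/(d-1)}}$ together with $d$-dimensional isoperimetry $|\partial_E A|\geq c\,|A|^{1-1/d}$ and the coarea identity $\|\nabla f\|_{\ell^1 E}=\int_0^\infty|\partial_E\{f>t\}|\,\mathrm dt$, one gets $\|f\|_{\ell^{d/(d-1)}}\leq c^{-1}\|\nabla f\|_{\ell^1 E}$; the non-negativity assumption is harmless since replacing $f$ by $|f|$ only decreases $\|\nabla f\|_{\ell^p E}$. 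This is the classical argument; see \cite[\S{}4]{Woe} and \cite[\S{}7]{Tro}.

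Next I would upgrade this to an $\ell^p$-inequality by the usual substitution $f\rightsquigarrow|f|^\gamma$. Applying the $\ell^1$-inequality to $|f|^\gamma$, estimating $\bigl||f(x)|^\gamma-|f(y)|^\gamma\bigr|\leq\gamma\max(|f(x)|,|f(y)|)^{\gamma-1}|f(x)-f(y)|$, applying H\"older on $E$ with exponents $p$ and $p/(p-1)$, and using bounded valency to bound $\sum_{(x,y)\in E}\max(|f(x)|,|f(y)|)^{r}\leq 2D\|f\|_{\ell^r}^r$, one is led to the choice $\gamma=\tfrac{p(d-1)}{d-p}$ (which is $>1$ precisely because $p>1$ and $d>p$) and to $\|f\|_{\ell^q}\leq C\|\nabla f\|_{\ell^p E}$ with $q=\tfrac{dp}{d-p}<\infty$ and $C=C(p,d,c,D)$. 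Since $\|\cdot\|_{\ell^q}\geq\|\cdot\|_{\ell^\infty}$ for the counting measure, this is exactly the inequality sought above, so $\cpc_p(x)\leq C$ for every $x$ and $(X,E)$ is uniformly $p$-hyperbolic. Note that $d>p$ already suffices here --- the same slack that is gained in Corollary \ref{tbndval-c} --- and $d>2p$ is more than enough.

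\textbf{Main obstacle.} There is no hard computation; the one point to watch is that \emph{every} constant appearing along the way must be seen to depend only on $p$, $d$, $c$ and $D$, and in particular neither on the chosen vertex $x$ nor on the support of $f$ --- which is precisely the content of ``uniformly''. This is automatic from the way the coarea/H\"older argument is run, but it should be made explicit rather than glossed over. A variant even closer to \cite{Go} would skip the substitution step and instead deduce $\|f\|_{\ell^\infty}\leq C\|\nabla f\|_{\ell^p E}$ from the $\ell^1$-Sobolev inequality by a truncation/interpolation argument at the cost of the weaker-looking hypothesis $d>2p$; either route yields the conclusion.
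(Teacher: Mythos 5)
Your proof is correct, but it is not the paper's argument. The paper does not run the Gagliardo--Nirenberg--Sobolev chain at all: it quotes from Woess that $d$-dimensional isoperimetry forces the heat-kernel decay $\|P^n_o\|_\infty\leq\kappa n^{-d/2}$, deduces that the Green kernel $k_o=\sum_{n\geq 0}P^n_o$ lies in $\ell^q X$ with norm independent of $o$ for a suitable $q<p'=\tfrac{p}{p-1}$, and then tests against it: for $f$ finitely supported with $f(o)=1$ one has $\pgen{\nabla f\mid\nabla k_o}=\pgen{f\mid\nabla^*\nabla k_o}=f(o)=1$, whence $\|\nabla f\|_p\geq\|\nabla k_o\|_{p'}^{-1}$, a bound uniform in $o$. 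The two constraints of that argument --- $q\leq p'$ for the duality step and $d>2q'$ for the summability of $\sum_n n^{-d/2q'}$ --- are exactly what produces the hypothesis $d>2p$. Your route (coarea formula for the $\ell^1$-Sobolev inequality, then the substitution $f\rightsquigarrow|f|^{\gamma}$ with $\gamma=\tfrac{p(d-1)}{d-p}$ and H\"older to reach $\|f\|_{\ell^{dp/(d-p)}}\leq C\|\nabla f\|_{\ell^pE}$) is essentially Troyanov's proof that $d_{par}\geq d_{isop}$, which the paper cites immediately after this lemma; as you note, the constant depends only on $p$, $d$, the isoperimetric constant and the valency bound, so uniformity is automatic and the conclusion already holds for $d>p$. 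That is genuinely stronger than the stated lemma: it would upgrade the subsequent remark $d_{u.p.}\geq\tfrac12 d_{isop}$ to $d_{u.p.}\geq d_{isop}$ and relax the second hypothesis of Corollary \ref{tcorroyd-c} from $p<d/2$ to $p<d$. What the paper's Green-function argument buys in exchange is brevity (the heat-kernel bound is taken as a black box from \cite[(14.5) Corollary]{Woe}) and a formulation in terms of $\nabla k_o$ that meshes with the potential-theoretic viewpoint of \cite{KLSW} used throughout \S\ref{ssbndval}.
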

\begin{proof}
First, recall that $d$-dimensional isoperimetry implies that the Green's kernel ($k_o:= \sum_{n \geq 0} P^n_o$ where $P^n_o$ is the random walk distribution at times $n$ starting at the vertex $o$) has an $\ell^q X$-norm (for some $q < p'= \frac{p}{p-1}$) which is bounded independently from $o$.

Indeed, $d$-dimensional isoperimetry implies that $\| P^n_o\|_\infty \leq \kappa n^{-d/2}$ (where $\kappa \in \rr$ comes from the constant in the isoperimetric profile; see Woess' book \cite[(14.5) Corollary]{Woe} for details).
From there, one gets that $\| P^n_o \|_q^q \leq \|P^n_o\|_q^{q-1} \|P^n_o\|_1 \leq \kappa^{q-1} n^{-d(q-1)/2}$.
This implies that $\| k_o\|_q \leq \sum_{n \geq 0} \kappa^{1/q'} n^{-d/2q'}$ (a series which converges if $d > 2q'$).
% Let $\kappa_q := \| k_o\|_q$.\footnote{\emph{A priori}, the value $\kappa_q$ might diverge as $q' \to d/2$ or it might remain bounded but become dependent on $o$ if $q' \geq d/2$.}

Second, let $f$ be a finitely supported function with $f(o)=1$, then 
\[
\pgen{ \nabla f \mid \nabla k_o } = \pgen{ f \mid \nabla^* \nabla k_o} = \pgen{ f \mid \delta_o} = f(o)=1.
\]
Since $\| \nabla f \|_p \geq \|\nabla k_o\|_{p'}^{-1} \pgen{ \nabla f \mid \nabla k_o }$, $\| \nabla k_o\|_{p'} \leq 2\nu \|k_o\|_{p'} \leq 2 \nu \|k_o\|_q  = 2 \nu \kappa_q^{-1}$  (where $\nu$ is the maximal valency of a vertex) and there is no dependence in $o$, this means that $\cpc_p(X,E) \leq  \kappa_q /2\nu$.

Noting that, for the above, the conditions $q \leq p'$ and $2q'< d$ need to hold, one gets that the bound holds as long as $2p < d$.
\end{proof}
Troyanov \cite[\S{}7]{Tro} defines a parabolic and isoperimetric dimensions and shows the inequality $d_{par} \geq d_{isop}$.
Actually, if one defines analogously the ``uniform parabolicity dimension'' $d_{u.p.} := \inf \{ p \mid (X,E)$ is uniformly $p$-parabolic$\}$, Lemma \ref{tunifpar-l} shows that $d_{u.p.} \geq \tfrac{1}{2} d_{isop}$. 
A more obvious inequality is $d_{par} \geq d_{u.p.}$.

One also gets the following corollary on the $p$-harmonic boundary and the $p$-Royden boundary.
For the definitions see Puls \cite[\S{}2.1]{Puls-pharm}.
\begin{cor}\label{tcorroyd-c}
Assume 
\begin{itemize}\renewcommand{\itemsep}{-1ex}
 \item either that $p<d$ and $\Gamma$ is the Cayley graph of a group $G$ which is finitely generated and has growth at least polynomial of degree $d$;
 \item or that $p< d/2$ and $\Gamma$ is a graph with a $d$-dimensional isoperimetric profile.
\end{itemize}
Then the $p$-Royden boundary and the $p$-harmonic boundary are equal.
\end{cor}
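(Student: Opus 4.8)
The plan is to deduce Corollary \ref{tcorroyd-c} from the characterisation of trivial classes in reduced $\ell^p$-cohomology proved just above (Corollary \ref{tbndval-c} and the surrounding discussion, together with Lemma \ref{tunifpar-l}). Recall that, following Puls \cite[\S{}2.1]{Puls-pharm}, the $p$-Royden boundary $R_p\Gamma$ is the boundary obtained from the Royden-type compactification associated to the algebra of bounded $p$-Dirichlet functions, and the $p$-harmonic boundary $\Delta_p\Gamma$ is the subset of $R_p\Gamma$ on which every function in $\D^p\Gamma \cap c_0$-type ideal vanishes; concretely, the complement $R_p\Gamma \setminus \Delta_p\Gamma$ consists of the points of the Royden boundary at which some bounded $p$-Dirichlet function with finite energy that is in the $\D^p$-closure of finitely supported functions does not vanish. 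So the two boundaries coincide if and only if that closure ``sees'' the whole Royden boundary, which by Puls' dictionary is equivalent to: every $f \in \D^p\Gamma$ which represents the trivial class in $\srl{H}^1(\Gamma,\ell^p)$ (equivalently, lies in $\srl{\ell^p\Gamma}^{\D^p}$) is constant at infinity, and conversely. That ``if and only if'' is exactly the content of Corollary \ref{tbndval-c} under the growth hypothesis $p<d$, and of the ``Furthermore'' estimate $\|f\|_\infty \leq \cpc_p(G,S)\|\nabla f\|_p$ (valid whenever $\cayl(G,S)$ is $p$-hyperbolic) in the co-compact case.

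Concretely I would proceed as follows. First, unwind the definitions: show that $R_p\Gamma = \Delta_p\Gamma$ is equivalent to the statement that the $\D^p$-closure of the finitely supported functions equals the set of $p$-Dirichlet functions that are constant at infinity (i.e. that lie in $\cc\un_G + c_0G$). One inclusion is automatic — finitely supported functions are in $c_0G$, and the inverse-$p$-capacity bound $\|g\|_\infty \leq \cpc_p(\Gamma)\|\nabla g\|_p$ for finitely supported $g$, proved in the proposition before Corollary \ref{tbndval-c}, shows that a $\D^p$-limit of finitely supported functions is automatically in $c_0G$ (hence constant at infinity). Second, for the reverse inclusion, given $f \in \D^p\Gamma$ constant at infinity, subtract the constant (WLOG the constant is $0$, so $f \in c_0G$) and run the truncation argument from the ``Quick proof'' of Corollary \ref{tbndval-c}: the truncations $g_\eps = f - f_\eps$ are finitely supported, lie in $\ell^p\Gamma$, and converge to $f$ in $\D^p$-norm. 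This is where the hypothesis enters: one needs $\Gamma$ to be $p$-hyperbolic for the capacity bound, and this is supplied either by ``$p<d$ with growth at least polynomial of degree $d$'' (Sobolev embedding from $d$-dimensional isoperimetry, as in the proof of Corollary \ref{tbndval-c}) or by ``$p<d/2$ with $d$-dimensional isoperimetric profile'' (Lemma \ref{tunifpar-l}, which even gives uniform $p$-hyperbolicity). Third, invoke Puls' correspondence between ideals of the Royden algebra and closed subsets of $R_p\Gamma$ to translate ``the $\D^p$-closure of finitely supported functions $=$ $p$-Dirichlet functions constant at infinity'' into ``$\Delta_p\Gamma = R_p\Gamma$'', which completes the proof.

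The only genuinely delicate point is the passage through Puls' formalism: one must check that the notion of ``constant at infinity'' used here matches precisely the functions in the Royden algebra that are constant on each point of $R_p\Gamma \setminus \Delta_p\Gamma$, and that the $\D^p$-closure of finitely supported functions corresponds to the ideal cutting out $\Delta_p\Gamma$. This is essentially bookkeeping against \cite[\S{}2.1]{Puls-pharm} (where $\Delta_p$ is defined so that a bounded $p$-Dirichlet function vanishes on $\Delta_p\Gamma$ iff it is a $\D^p$-limit of finitely supported functions), so no new ideas are needed; the analytic heart — the capacity inequality and the truncation approximation — is already in place from the preceding proposition and Lemma \ref{tunifpar-l}. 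I would also remark that in the co-compact (Cayley graph) case one does not even need the uniform version, since vertex-transitivity makes $\cpc_p(x)$ independent of $x$ automatically (Remark \ref{rKLSW}.1), which is why the first bullet only asks for $p<d$ rather than $p<d/2$.
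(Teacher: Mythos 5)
Your argument is correct and follows the same route as the paper, which simply defers to the $p=2$ proof of Keller, Lenz, Schmidt \& Wojchiechowski \cite[Theorem 4.1]{KLSW}: the whole content is the uniform capacity estimate $\|f\|_\infty \leq \cpc_p\|\nabla f\|_p$ (supplied by the preceding proposition in the Cayley-graph case and by Lemma \ref{tunifpar-l} in the isoperimetric case), which forces every bounded $\D^p$-limit of finitely supported functions into $c_0\Gamma$ and hence to vanish on the whole Royden boundary. Note that only this one inclusion is needed for the equality of the two boundaries; your ``reverse inclusion'' via the truncation argument of Corollary \ref{tbndval-c} is correct but superfluous, and the claimed equivalence in your first step is only used in the direction you actually prove.
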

The proof is identical to that of the analogous result for $p=2$ by Keller, Lenz, Schmidt \& Wojchiechowski; see \cite[Theorem 4.1]{KLSW}. 

Combined with Puls \cite[Theorem 2.5]{Puls-pharm}, this shows that many groups have only one point in the $p$-Royden boundary (\ie any group which is not nilpotent and has trivial reduced $\ell^p$-cohomology; see Remark \ref{rreflpcoh} and the upcoming subsections for examples).

%----------------------------------
\subsection{Applications to $\ell^p$-cohomology}\label{sslpqnor}
%----------------------------------

A special case of Theorem \ref{tteoptweak-t} applied to the left-regular representation on $\ell^pG$ (which is strongly mixing) combined with a result from Martin \& Valette \cite[Proposition 2.6]{MV} (see Proposition \ref{tcorMV-c} above), one gets a refinement of Bourdon, Martin \& Valette \cite[Theorem 1.1)]{BMV}:
\begin{cor}\label{tcorptlp-c}
Assume $K<G$ is an infinite subgroup and $H^1(K,\lmd_{\ell^pK})=\{0\}$.
Then either $H^1(G,\lmd_{\ell^pG}) =0$ or there is an almost-malnormal subgroup $H \lneq G$ so that $K < H$. In particular, if $K$ is wq-normal then $H^1(G,\lmd_{\ell^pG}) =0$
\end{cor}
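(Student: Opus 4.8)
The plan is to deduce this directly from Theorem~\ref{tteoptweak-t} applied to the left-regular representation $\pi=\lmd_{\ell^pG}$ and the subgroup $H=K$. The representation $\lmd_{\ell^pG}$ is strongly mixing, hence mildly mixing, hence has finite stabilisers (recall $G$ is infinite, since $K$ is); so the standing hypothesis of Theorem~\ref{tteoptweak-t} on $\pi$ holds. The one point that is not purely formal is to upgrade the hypothesis $H^1(K,\lmd_{\ell^pK})=0$ into $H^1(K,\lmd_{\ell^pG}|_K)=0$. Granting this, Theorem~\ref{tteoptweak-t} yields $H^1(M,\lmd_{\ell^pG}|_M)=0$ for $M$ the almost-malnormal hull of $K$, and the stated dichotomy is then just the translation through Lemma~\ref{twqnor-amal}: either $M=G$, equivalently $K$ is wq-normal in $G$, and then $H^1(G,\lmd_{\ell^pG})=0$; or $M$ is a proper almost-malnormal subgroup of $G$ containing $K$.

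For the remaining point I would first decompose $\lmd_{\ell^pG}|_K$ over the $K$-orbits in $G$. These orbits are the cosets $Kg$ with $g$ running over $K\backslash G$, and $k\mapsto kg$ identifies $\ell^p(Kg)$ with $\ell^pK$ in a $K$-equivariant, isometric way; hence $\lmd_{\ell^pG}|_K$ is the $\ell^p$-direct sum of copies of $\lmd_{\ell^pK}$ indexed by $K\backslash G$. When $K$ is finitely generated one concludes by a direct estimate: $H^1(K,\lmd_{\ell^pK})=0$ means, by Guichardet, that $K$ is non-amenable and that the coboundary map $d\colon\ell^pK\to Z^1(K,\lmd_{\ell^pK})$ is a bounded bijection onto the Banach space $Z^1$ (it is injective since $\lmd_{\ell^pK}$ has no non-zero invariant vector, $K$ being infinite), hence bounded below: $\|df\|_\mu\ge c\|f\|_p$ for some $c>0$. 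Given $b\in Z^1(K,\lmd_{\ell^pG}|_K)$, each component $b_j$ is a cocycle for $\lmd_{\ell^pK}$, so $b_j=df_j$ with $f_j\in\ell^pK$ unique, and summing $\|f_j\|_p^p\le c^{-p}\|df_j\|_\mu^p=c^{-p}\sum_{s}\mu(s)\|b_j(s)\|_p^p$ over $j$ shows $(f_j)_j\in\ell^pG$; thus $b$ is a coboundary, so $H^1(K,\lmd_{\ell^pG}|_K)=0$.

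For a general, possibly non finitely generated, $K$ I would combine the finitely generated case with the localisation of Martin \& Valette (Proposition~\ref{tcorMV-c}) and the equivalence, for $\lmd_{\ell^p}$, between vanishing of $H^1$ and simultaneous non-amenability and vanishing of $\srl{H}^1$ (Guichardet): writing $K=\bigcup_iK_i$ with $K_i$ finitely generated, one transfers vanishing of reduced cohomology to $\lmd_{\ell^pG}|_K$, while non-amenability of $K$---hence of $G$---accounts for the passage back from reduced to unreduced cohomology. This last reduction is the delicate step, and the one I expect to be the main obstacle: finitely generated subgroups of $K$ need not have vanishing $\ell^p$-cohomology, so the argument has to be carried out at the level of \emph{reduced} cohomology, which is precisely the setting where Martin \& Valette's result applies. (Alternatively, once $\lmd_{\ell^pG}|_K$ is understood, Lemma~\ref{tlempopastyle-l} applied along an ascending q-normal sequence gives the same conclusion when $K$ is wq-normal, using that finite stabilisers is inherited by restrictions to subgroups.)
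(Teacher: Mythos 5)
Your proposal is correct and follows essentially the same route as the paper: apply Theorem \ref{tteoptweak-t} to $\lmd_{\ell^pG}$ (strongly mixing, hence finite stabilisers since $G$ is infinite), after upgrading the hypothesis $H^1(K,\lmd_{\ell^pK})=0$ to $H^1(K,\lmd_{\ell^pG}|_K)=0$ via the coset decomposition of $\ell^pG$ into copies of $\ell^pK$ --- the step the paper delegates to Martin \& Valette's Proposition 2.6 --- and then translate the almost-malnormal hull dichotomy through Lemma \ref{twqnor-amal}. The only remark worth making is that your open-mapping argument already covers non-finitely-generated $K$ verbatim: $Z^1(K,\lmd_{\ell^pK})$ is a Fr\'echet space for the topology of pointwise convergence, so continuity of $d^{-1}$ into the Banach space $\ell^pK$ yields a bound $\|f\|_p\leq C\max_{k\in F}\|(df)(k)\|_p$ for some finite $F\subset K$, which sums over the cosets exactly as in your finitely generated case; the detour through reduced cohomology, amenability and the localisation of Proposition \ref{tcorMV-c} is therefore unnecessary.
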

However, using Corollary \ref{tbndval-c}, one can prove an essentially finer result.
% (most hypothesis are weaker but finite generation has to be added).
The following theorem is not only a generalisation of 
Bourdon, Martin \& Valette \cite[Theorem 1.1)]{BMV} (if $N \lhd G$ is infinite and $N<H<G$ then $H$ is q-normal), but also of \cite[Theorem 1.4]{Go}. 
Except for the finite generation hypothesis, it is also stronger than Corollary \ref{tcorptlp-c}.
\begin{teo}\label{tqnorlpcoh-t}
Let $1 < p < d \in \rr$. Assume $K$ is wq-normal in $G$, both $K$ and $G$ are finitely generated, $K$ has growth at least degree $d$ polynomial and $\srl{H}^1(K,\lmd_{\ell^p K})=0$. Then $\srl{H}^1(G,\lmd_{\ell^p G}) =0$. 
\end{teo}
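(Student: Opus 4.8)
The plan is to run a transfinite induction along an ascending q-normal sequence from $K$ to $G$, reducing at each step the vanishing of reduced $\ell^p$-cohomology of the larger group to that of the smaller one. The key local statement to prove is: if $H < L$ with $H$ q-normal in $L$, both finitely generated, $H$ of growth at least polynomial of degree $d > p$, and $\srl{H}^1(H,\lmd_{\ell^pH}) = 0$, then $\srl{H}^1(L,\lmd_{\ell^pL}) = 0$. Granting this, one handles successor ordinals directly and limit ordinals by Proposition~\ref{tcorMV-c} (writing the union of the chain as an increasing union of finitely generated subgroups, each of which has trivial reduced $\ell^p$-cohomology by the induction hypothesis, noting that the growth hypothesis on $K$ is inherited since $K$ sits inside every term of the chain). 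Here I would need to be slightly careful: the terms $H_\gamma$ of the ascending q-normal sequence need not be finitely generated, so at successor steps one should instead argue that every finitely generated subgroup $H' < H_{\gamma+1}$ with $H' \supset$ (enough of) $H_\gamma$ has trivial reduced cohomology, and feed this back through Proposition~\ref{tcorMV-c}; alternatively, replace the given q-normal chain with the canonical iterated q-normaliser sequence from Lemma~\ref{twqnor-amal}.

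For the local step, let $b \in Z^1(L,\lmd_{\ell^pL})$ be a $p$-harmonic cocycle (by \S\ref{ssbanachrep} these represent reduced cohomology classes). Realize $b$ as a virtual coboundary $b(g) = \lmd(g)f - f$ with $f \in \D^pL$ (the Schreier graph here is the Cayley graph, so this is just the classical picture of \S\ref{ssvirtcob}). Restricting to $H$: $b_{|H}$ is a cocycle for $\lmd_{\ell^pL}|_H$, which decomposes as a (countable) direct sum of copies of $\lmd_{\ell^pH}$ indexed by the cosets $H\backslash L$; since $\srl{H}^1(H,\lmd_{\ell^pH}) = 0$ and $H$ has growth at least degree $d > p$, Corollary~\ref{tbndval-c} applies to each coset-component of $f$, forcing $f$ to be constant at infinity along each $H$-orbit in $L$. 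The point is then to promote "constant at infinity on each $H$-orbit" to "$b$ is trivial in $\srl{H}^1(L,\lmd_{\ell^pL})$", using q-normality of $H$ in $L$.

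The main obstacle is exactly this promotion step, which is where the q-normality hypothesis must be used in the style of Peterson--Thom \cite[Theorem 5.6]{PT} and Popa \cite{Po}. After subtracting an honest coboundary one may assume $f$ is constant (say $0$) at infinity on the orbit $Hx_0$ containing the basepoint. For a generator $g$ in the symmetric generating set witnessing q-normality, $gHg^{-1} \cap H$ is infinite; applying the relation~\eqref{eqcocpopa} over this infinite subgroup, together with the fact that the values of $f$ (hence of $b$) are pinned down at infinity, should force the value-at-infinity constant of $f$ on the orbit $Hg^{-1}x_0$ to agree with that on $Hx_0$ — and since $L$ is generated by such $g$ together with $H$, this propagates the single constant to all of $L$, making $f$ constant at infinity on all of $L$, hence $b$ trivial in reduced $\ell^p$-cohomology by Corollary~\ref{tbndval-c} applied to $L$ (whose growth is at least that of $H$, so at least degree $d > p$). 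Making the propagation argument precise — controlling how the $\ell^p$-gradient condition interacts with the infinitely many constraints coming from $gHg^{-1}\cap H$, and checking measurability/convergence issues for the virtual-coboundary function $f$ — is the delicate part; everything else is bookkeeping with Proposition~\ref{tcorMV-c} and the transfinite induction.
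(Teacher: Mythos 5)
Your core mechanism is the same as the paper's: realise the cocycle as a virtual coboundary $f\in\D^pG$, restrict to the cosets of the small subgroup, apply Corollary~\ref{tbndval-c} on each coset to get a constant at infinity there, and then use q-normality together with the finiteness of $\|\lmd(g)f-f\|_{\ell^p}$ to force the constants on different cosets to agree. The ``delicate promotion step'' you leave open is carried out in the paper by a direct computation, with no need for~\eqref{eqcocpopa}: one writes $\|\lmd(g)f-f\|_{\ell^p}^p$ as a sum over pairs (coset, element of $K$), picks a sequence $k_n\in gKg^{-1}\cap K$, and observes that $k_ng_i$ goes to infinity in $Kg_i$ while $g^{-1}k_ng\cdot g^{-1}g_i$ goes to infinity in the single coset $Kg^{-1}g_i$; if the two constants at infinity differed, infinitely many terms of the sum would be bounded below, contradicting $f\in\D^pG$. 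Where you genuinely diverge from the paper is the global architecture, and here the paper's version is strictly cleaner. You run a transfinite induction through intermediate groups $L$, each of which must be finitely generated to carry its own Cayley graph, its own $\D^pL$, and its own application of Corollary~\ref{tbndval-c}; as you note, the terms of an ascending q-normal chain need not be finitely generated, and your proposed repairs (working with finitely generated subgroups of $H_{\gamma+1}$ containing ``enough of'' $H_\gamma$, or switching to the iterated q-normalisers) do not obviously restore q-normality of the relevant finitely generated pieces, since a generator witnessing $gH_\gamma g^{-1}\cap H_\gamma$ infinite need not witness $gK''g^{-1}\cap K''$ infinite for the finitely generated $K''$ you substitute. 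The paper avoids all of this by never leaving $G$: the decomposition is always into $K$-cosets of the ambient finitely generated group, and the tower $N^{q,\alpha}_G(K)$ from Lemma~\ref{twqnor-amal} is used purely as bookkeeping to record which cosets have been shown to share the same constant at infinity; the intermediate q-normalisers are never required to be finitely generated, to have polynomial growth, or to have vanishing cohomology. The final step is then a single application of Corollary~\ref{tbndval-c} to $G$ itself (after checking that ``same constant on every $K$-coset'' yields triviality of the class), rather than one application per stage of the induction. If you rewrite your argument so that the only groups carrying an $\ell^p$-structure are $K$ and $G$, the finite-generation worries disappear and Proposition~\ref{tcorMV-c} is not needed at all.
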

\begin{proof}
Let $S$ be a finitely generating set of $G$ for which $K$ is q-normal. 
Without loss of generality, one may assume $S$ contains a finite generating set for $K$ called $S_K$.
Let $b \in Z^1(G,\lmd_{\ell^p G})$ be a cocycle and write it as a virtual coboundary: $b = \pi(g) f -f $ where $f \in \D^pG$ (\ie $f$ is a function on $\cayl(G,S)$ with gradient in $\ell^p$). 
Decompose $G = \sqcup_i  K g_i$ into $K$-cosets. 
The graph restricted to any of these cosets is isomorphic to $\cayl(K,S_K)$ (the map is $kg_i \mapsto k$).
Furthermore, \emph{via} this isomorphism, the function $f$ restricted to any coset is a function in $\D^pK$. 

Using Corollary \ref{tbndval-c} (since $\srl{H}^1(K,\lmd_{\ell^p K})=0$ and $K$ has growth at least $d$), $f$ takes only one value at infinity on each subgraphs $K g_i$. 
Since $f \in \D^pG$, the following sum is finite:
\[
\| \pi(g)f-f \|_{\ell^p}^p 
= \sum_{g_i \in K \bsl G, k \in K} | f( g^{-1}kg_i) - f(kg_i)|
= \sum_{g_i \in K \bsl G, k \in K} | f( g^{-1}kg g^{-1}g_i) - f(kg_i)|
\]
If $g$ is [non-trivial and] in the set generating the quasi-normaliser of $K$, $N^q_G(K)$, there are infinitely many $k \in gKg^{-1} \cap K$. 
In particular, there is a sequence $k_n$ so that $ g^{-1}k_n g g^{-1}g_i$ tend to infinity in $K g_j$ (where $Kg_j = Kg^{-1}g_i$) and  $k_ng_i$ tens to infinity in $K g_i$. 
This implies that the constants at infinity on these cosets must be the same, otherwise the sum diverges and $f \notin \D^pG$. 
This shows that the constant at infinity is the same on each $K$ coset which lie in the same $N^q(K)$ coset.
However, the argument can be reapeated on $N^{q,2}(K) := N^q_G(N^q_G(K))$.

By pursuing this using transfinite induction, one gets that constant is the same on $N^{q,\alpha}(K)= G$
By Corollary \ref{tbndval-c}, one sees that the class of $f$ is the trivial class.
\end{proof}
Note the conclusion also follows from a maximum principle for $p$-harmonic functions (see \S{}\ref{ssbanachrep} and \S{}\ref{ssvirtcob} or Puls \cite{Puls-Can,Puls-harm} or  Martin \& Valette \cite[\S{}3]{MV}). 
Bourdon in \cite[\P{}\textbf{4)} in \S{}1.6]{Bourdon} (see also \cite[Example 1 in \S{}3]{BMV}) has given a very nice example showing that the hypothesis that $\srl{H}^1(K,\lmd_{\ell^pK})=0$ cannot be dropped from Theorem \ref{tqnorlpcoh-t}.

Next, let us show an improvement of Martin \& Valette \cite[Theorem 4.2]{MV}. 
The main interest in the following proof is that the vanishing of the cohomology for the subgroup is no longer necessary. 
This comes at the cost of an hypothesis on the FC-centraliser.

Two functions $f_1,f_2 : G \to \cc$ will be said to have the \textbf{same value at infinity}\footnote{
In other words, for any sequence $g_n \to \infty$ (\ie $g_n$ exits any finite set), $|f_1(g_n)-f_2(g_n) | \to 0$.}
if $f_1-f_2$ belongs to $c_0G$.
\begin{teo}\label{tqnorcen-t}
Assume $G$ is finitely generated. Let $K<G$ be a finitely generated wq-normal subgroup with growth at least polynomial of degree $d$. Assume its FC-centraliser $Z_G^{FC}(K)$ is infinite and that $p<d$. 
Then $\srl{H}^1(G,\lmd_{\ell^pG}) =0$ .
\end{teo}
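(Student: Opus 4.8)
The plan is to mimic the argument of Theorem \ref{tqnorlpcoh-t}, replacing the hypothesis $\srl{H}^1(K,\lmd_{\ell^p K})=0$ (which made $f$ constant at infinity on each $K$-coset) by the structural information provided by an infinite FC-centraliser. First I would fix a finite symmetric generating set $S$ of $G$ for which $K$ is q-normal (so that the iterated q-normalisers of $K$ exhaust $G$ by wq-normality, by Lemma \ref{twqnor-amal}), and I would enlarge $S$ to contain a finite generating set $S_K$ of $K$. Given a cocycle $b \in Z^1(G,\lmd_{\ell^p G})$, write it as a virtual coboundary $b(g) = \lmd(g) f - f$ with $f \in \D^p G$, and decompose $G = \sqcup_i K g_i$ into $K$-cosets; each subgraph $K g_i$ of $\cayl(G,S)$ is isomorphic to $\cayl(K,S_K)$ and the restriction $f_i$ of $f$ to it lies in $\D^p K$. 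Because $K$ has growth at least polynomial of degree $d>p$, the Cayley graph of $K$ is $p$-hyperbolic, so by (the relevant part of) the proposition preceding Corollary \ref{tbndval-c} we have the inequality $\|h\|_\infty \leq \cpc_p(K,S_K)\,\|\nabla h\|_p$ for finitely supported $h$ on $\cayl(K,S_K)$, and hence $\D^p K$ functions have well-defined values at infinity along any escaping sequence in $K$, with oscillation controlled by the $\ell^p$-norm of the gradient. So the issue is again reduced to showing that these values at infinity are forced to agree.

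The new ingredient enters here. Let $z \in Z^{FC}_G(K)$; by the Remark opening Section \ref{scen}, $Z^{FC}_G(K) \subset N^q_G(K)$, so it suffices to show $f$ has the same value at infinity on $Kg_i$ and on $K z g_i = K g_j$ for such $z$ (and, once this is known for generators of the q-normaliser, to iterate transfinitely exactly as in Theorem \ref{tqnorlpcoh-t} up to $N^{q,\alpha}(K)=G$). The point is that since $z \in Z^{FC}_G(K)$, the set $\{k^{-1} z k : k \in K\}$ is finite, so there is an infinite subgroup $K_0 < K$ centralising $z$ (the stabiliser of a point in a finite $K$-set is infinite). Then for $k \in K_0$ the term $|f(z^{-1} k g_i) - f(k g_i)| = |f(k z^{-1} g_i) - f(k g_i)|$ compares the values of $f$ at $k z^{-1} g_i \in K g_j$ and $k g_i \in K g_i$ along a common escaping sequence in $K_0 \subset K$ (as $k \to \infty$ in $K_0$, both $k z^{-1} g_i$ and $k g_i$ escape in their respective cosets, staying at bounded distance $|z|_S$ from each other). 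Summability of $\|b\|_{\ell^p}^p = \sum_{g_i,\,k} |f(z^{-1}kg_i)-f(kg_i)|^p$ then forces the values at infinity on $Kg_i$ and $Kg_j$ to coincide — otherwise the infinitely many terms indexed by $k \in K_0$ are bounded below and the sum diverges, contradicting $f \in \D^p G$. This establishes the coset equality for $z$, hence across every $N^q_G(K)$-coset; transfinite induction along $N^{q,\beta}_G(K)$ then propagates equality to all of $G$, and by Corollary \ref{tbndval-c} (applicable since $G$ is finitely generated — one also needs $G$ to have growth at least $d$, which follows as $K<G$) the class of $f$ is trivial. Thus $\srl{H}^1(G,\lmd_{\ell^p G})=0$.

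The main obstacle I anticipate is the careful handling of the escaping-sequence/value-at-infinity bookkeeping through the transfinite induction: one must ensure that at each step $\beta$ the function $f$, now known to have a single value at infinity on each $N^{q,\beta}(K)$-coset, still has enough regularity (a controlled oscillation, i.e. the inequality from $p$-hyperbolicity) on the larger cosets to run the next comparison — and the larger subgroups need not have polynomial growth bounds of their own. This is resolved exactly as in Theorem \ref{tqnorlpcoh-t}: one never needs $p$-hyperbolicity of the intermediate subgroups, only of $K$, because the comparison at stage $\beta$ is always made along sequences lying inside a single translate of $K$ (indeed inside $K_0 < K$), where the estimate is available; the ``value at infinity on a $N^{q,\beta}(K)$-coset'' is then simply the common value of $f$ at infinity on all the $K$-cosets it contains. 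The secondary subtlety is extracting the infinite centralising subgroup $K_0$ from finiteness of the conjugacy orbit $\{k^{-1}zk\}$, which is the standard orbit-stabiliser observation already used in the Remark in Section \ref{scen}.
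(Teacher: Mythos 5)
The central step of your argument --- ``$\D^p K$ functions have well-defined values at infinity along any escaping sequence in $K$'' --- is false, and it is precisely where the difficulty of this theorem lies. The inequality $\|h\|_\infty \le \cpc_p \|\nabla h\|_p$ holds for finitely supported $h$, hence for the $\D^p$-closure of the finitely supported functions; it says nothing about a general element of $\D^p K$. Indeed Corollary \ref{tbndval-c} says that $f \in \D^p K$ is constant at infinity \emph{if and only if} its class in $\srl{H}^1(K,\lmd_{\ell^p K})$ is trivial, and the present theorem deliberately drops the hypothesis $\srl{H}^1(K,\lmd_{\ell^p K})=0$ that Theorem \ref{tqnorlpcoh-t} relied on. So the restrictions $f_i$ of $f$ to the $K$-cosets may have no value at infinity at all, and your comparison of ``the value at infinity on $Kg_i$'' with that on $Kg_j$ along the single escaping sequence $K_0 g_i$ is not meaningful: at best it shows $|f(kz^{-1}g_i)-f(kg_i)|\to 0$ along $K_0$, which is perfectly compatible with both restrictions being non-constant at infinity. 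The transfinite induction therefore never gets started.

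What the hypothesis on $Z^{FC}_G(K)$ is actually for is to rule out this bad case, and your proposal does not use it in that way. One first splits off the case where $Z^{FC}_K(K)=Z^{FC}_G(K)\cap K$ is infinite: then $K$ has infinite FC-centre, so $\srl{H}^1(K,\lmd_{\ell^p K})=0$ by Remark \ref{rreflpcoh}.1 and Theorem \ref{tqnorlpcoh-t} applies directly. Otherwise $Z^{FC}_G(K)$ meets $K$ in a finite set, so it produces infinitely many \emph{distinct} cosets $Kz_i g_{j_0}$; the FC property ($k z_i g = (k z_i k^{-1})\, k g$ with $\{k z_i k^{-1}\}_{k\in K}$ finite) keeps corresponding points of these cosets at uniformly bounded distance, so if some $f_{i_0}$ were \emph{not} constant at infinity, all of these infinitely many restrictions would carry the same non-trivial values at infinity, and each would have $\D^p K$-norm bounded below by that of the minimising $p$-harmonic function with those boundary values --- forcing $\|\nabla f\|_{\ell^p}=\infty$, a contradiction. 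This energy count over infinitely many cosets is the missing idea; once each $f_i$ is known to be constant at infinity, the conclusion does follow as in Theorem \ref{tqnorlpcoh-t}, as you describe.
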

\begin{proof}
If $Z_G^{FC}(K) \cap K = Z_K^{FC}(K)$ is infinite, then $K$ has an infinite FC-centre. 
Remark \ref{rreflpcoh}.1 and Theorem \ref{tqnorlpcoh-t} give the claim.

So assume $Z_K^{FC}(K)$ is finite and $Z_G^{FC}(K)$ is infinite. 
Many elements of the rest of this proof resembles the proof of Theorem \ref{tqnorlpcoh-t}.
Let $f \in \D^pG$ be a virtual coboundary for a cocycle $b$ and let $G = \sqcup_i K g_i$.
Consider further $z_i \in Z_G^{FC}(K)$ so that $g_i= z_i g_{j(i)}$. 
A first useful claim is that, since $K \cap Z_G^{FC}(K)$ is finite and $Z_G^{FC}(K)$ is infinite, given $i_1$, there are infinitely many $i_2$ with $j(i_2) = j(i_1)$.

To prove this claim, recall that given two subgroups $K,L <G$, $[L : L \cap K] = [KL : K]$ ($KL$ is not necessarily a subgroup, but it can nevertheless be split into $K$-(left)-cosets ). 
This follows from the orbit-stabiliser theorem: $L$ acts transitively (on the right) on the $K$-cosets in $KL$ and the element $K$ has stabiliser $K \cap L$. 
Taking here $L = Z_G^{FC}(K)$, this means that there are also infinitely many $K$-cosets in $KL$ because $K \cap L$ is finite and $L$ is infinite,

As in Theorem \ref{tqnorlpcoh-t}, let $f_i$ be the restriction of $f$ to the coset $Kg_i = Kz_ig_{(i)}$; $f_i$ is identified to an element of $\D^pK$.
If for all $i$, $f_i$ is trivial in reduced cohomology, then the proof is identical to that of Theorem \ref{tqnorlpcoh-t} (the hypothesis $\srl{H^1}(K,\lmd_{\ell^pK})=0$ was used to this effect).
This means one can assume that for some $i_0$, $f_{i_0}$ is not constant at infinity.

To $f_{i_0}$ one can associate a $p$-harmonic function $h$, which is the element of $\D^pK$ with minimal norm which takes the same values at infinity as $f_{i_0}$ (again, see Martin \& Valette \cite[\S{}3]{MV}, Puls \cite{Puls-Can,Puls-harm} or \S{}\ref{ssbanachrep} and \S{}\ref{ssvirtcob}).
Let $i'$ be such that $j(i') =j(i_0) =:j_0$.
The distance from $k z_{i'} g_{j_0} = k z_{i'} k^{-1} k g_{j_0}$ to $k z_i g_{j_0} = k z_ik^{-1}k g_{j_0}$ is bounded by $\maxx{k \in K} |kz_i^{-1}z_{i'}k^{-1}|$ . 
% as long as $j(i) = j(i')$.
Because the gradient of $f$ is in $\ell^p$, this implies that, for any $i'$ with $j(i') = j(i_0)$, $f_{i_0}$ and $f_{i'}$ take the same values at infinity.

The $\D^pK$-norm of the $f_{i'}$ is however uniformly bounded from below by the $\D^pK$-norm of $h$ (as $h$ has the smallest $\D^p$ among all functions which take the same value at infinity as $f_{i_0}$). 
But there are infinitely many such restrictions,
and the $\D^pG$-norm of $f$ includes the sum of all these $\D^pK$-norms.
So $f$ has infinite $\D^pG$-norm, a contradiction.

Thus for any $i$, $f_i$ is constant at infinity.
This means that $f$ takes only one value at infinity on each subgraph $Kg_i$. 
From there on, the proof is identical to that of Theorem \ref{tqnorlpcoh-t} (the hypothesis $\srl{H^1}(K,\lmd_{\ell^pK})=0$ was used to this effect).
\end{proof}
The proof in the case where $Z_K^{FC}(K)$ is finite can be shortened significantly if $Z_G(K)$ contains an infinite finitely generated subgroup $Z'$ 
(\ie $Z_G(K)$ is not locally finite).
Indeed, the subgroup generated by $K$ and $Z'$ is then isomorphic to a direct product (and is still q-normal).
The claim then follows by Remark \ref{rreflpcoh}.3 and Theorem \ref{tqnorlpcoh-t}.

% Note that it can be easier to have a large FC-centraliser in a smaller subgroup. 
% \begin{cor}
% Assume $K<H<G$ are finitely generated. 
% Assume $K$ is wq-normal in $H$ and its growth is at least polynomial of degree $d$.
% Assume $Z_H^{FC}(K)$ is infinite and $H$ is wq-normal in $G$.
% Then, for $p<d$, $\srl{H}^1(G,\lmd_{\ell^pG}) =0$ .
% \end{cor}
% Note that this corollary is a combination of the two theorems (Theorem \ref{tqnorcen-t} applied to $K<H$ and then Theorem \ref{tqnorlpcoh-t} applied to $H<G$). 

%-----------------------
\subsection{Further corollaries}\label{sslpcor}
%-----------------------

Before moving on to a larger class of groups, let us make a simple example.
\begin{exa}\label{exBS}
Let $G = \pgen{a,b \mid ba^pb^{-1} = a^q}$ (with $p,q \in \zz^\times$) be the Baumslag-Solitar group. 
Let $K = \pgen{ a, bab^{-1}}$. 
Then $K \simeq \pgen{ a,y \mid y^p = a^q}$ (by the isomorphism $y := bab^{-1}$) has exponential growth as soon as $|p| \neq 1 \neq |q|$ (because it surjects on $\pgen{a,y\mid y^p=a^q=1} \simeq C_p * C_q$) and is q-normal in $G$ for the generating set $\{a,b\}$. 
On the other hand $K$ has an infinite centre (the subgroup generated by $a^q$), hence $\srl{H}^1(K,\lmd_{\ell^p K})=0$ (see Remark \ref{rreflpcoh}.1).
By Theorem \ref{tqnorlpcoh-t}, $\srl{H}^1(G,\lmd_{\ell^pG})=0$.

Note that if $|p| = |q|$, $G$ has an infinite centre so that the conclusion follows directly from Remark \ref{rreflpcoh}.1. 
Also, the solvable Baumslag-Solitar (\ie when $|p|=1$ or $|q|=1$) groups are already known to have $\srl{H}^1(G,\lmd_{\ell^p G})=0$ by Remark \ref{rreflpcoh}.2.
\end{exa}
\begin{exa}\label{exHig}
Let $G = \pgen{a_i , i \in \zz / 4\zz \mid a_{i+1}a_ia_{i+1}^{-1} = a_i^2}$ be the Higman 4-generator 4-relator group.
Let $H_3$ be the subgroup generated by the $a_0, a_1$ and $a_2$ and let $H_2$ be the one generated by $a_0$ and $a_1$.
$H_2$ is isomorphic to a Baumslag-Solitar group (with $p=1$ and $q=2$). 
In particular, $\srl{H}^1(H_2,\lmd_{\ell^p H_2})=0$ (by the previous example).
$H_2$ is q-normal in $H_3$: $a_2 H_2 a_2^{-1} \cap H_2 \supset \{ a_1^{2n} \}_{n \in \zz}$.
$H_3$ is also q-normal in $G$: $a_3 H_3 a_3^{-1} \cap H_3 \supset \{ a_2^{2n} \}_{n \in \zz}$.
By Theorem \ref{tqnorlpcoh-t}, $\srl{H^1}(G,\lmd_{\ell^p G})=0$.
% 
% Note that one can also use Corollary \ref{tcorptlp-c}. 
% Indeed, as $H_2$ is not amenable, $H^1(H_2,\lmd_{\ell^p H_2})=0$.
% Next note that any group $L$ (except $G$) which contains $H_2$ is not almost-malnormal. 
% Indeed, if $K$ does not contain $a_2$, then $a_2 K a_2^{-1} \supset a_2 H_2 a_2^{-1}$ (which is infinite, as above).
% $K$ contains $a_2$, then $a_3 K a_3^{-1} \supset a_3 H_3 a_3^{-1}$ (which is infinite, as above).
% $H^1(G,\lmd_{\ell^p G})=0$.
\end{exa}

The previous examples illustrates an important gain made by considering q-normality. There are finitely generated groups (such as $\zz \wr \zz$) where the [non-trivial] normal subgroups are not finitely generated. In such a group \cite[Theorem 1.4]{Go} cannot be applied. However, any infinite finitely generated subgroup $K_0$ of $H$ gives rise to a q-normal subgroup: look at $K$ the subgroup generated by $\cup_{g \in S_G} g K_0 g^{-1}$ where $S_G$ is a generating set of $G$. This means there are lots of candidates to apply Theorem \ref{tqnorlpcoh-t}.

Solvable groups have ``few'' malnormal subgroups (and usually ``many'' subnormal subgroups) so they make natural examples for the application of Theorem \ref{tqnorlpcoh-t}.
Recall that derived series of a group $H$ is defined by $H^{(i+1)} = [H^{(i)},H^{(i)}]$ with $H^{(0)} = H$.
The derived length of a solvable group $G$ (\ie a group for which the derived series stabilises at $\{1\}$ after finitely many steps) is the smallest $k$ such that $G^{(k)}$ is trivial.
The Hirsch length (the number of infinite cycle factors in the quotients $G^{(i+1)}/G^{(i)}$) may be infinite even if the derived length is finite.
\begin{exa}\label{exFM}
The free solvable group of derived length $k$ and rank $n$, \ie $G \simeq F_n / F_n^{(k)}$ (where $F_n$ is the free group on $n$ generators),
are groups to which Theorem \ref{tqnorlpcoh-t} applies. 
Indeed, for any $d$, $F_n^{(k-1)}/F_n^{(k)}$ contains subgroups isomorphic to $\zz^d$ which are q-normal.
\end{exa}
More generally, 
\begin{cor}\label{tsolvlp-c}
Assume $G$ is a finitely generated solvable group of derived length $k$ and $\mr{rank}_\zz G^{(k-1)} \geq d$, then, for $p<d$, $\srl{H}^1(G,\lmd_{\ell^p G})=0$.
\end{cor}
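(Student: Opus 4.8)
The plan is to reduce the statement to Theorem \ref{tqnorlpcoh-t}, following the scheme indicated in Example \ref{exFM}. Write $N := G^{(k-1)}$; by definition of the Hirsch length $G^{(k)} = \{e\}$, so $[N,N] = G^{(k)} = \{e\}$ and $N$ is abelian, and being a term of the derived series $N$ is characteristic — hence normal — in $G$. From the hypothesis $\mathrm{rank}_\zz N \ge d$ I would extract a free abelian subgroup $A \cong \zz^{m} \le N$ with $m := \lceil d\rceil$; note $1 < p < d \le m$.

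Next I would produce the q-normal subgroup. Fix a finite generating set $S$ of $G$ and generators $a_1,\dots,a_m$ of $A$, and set
\[
K := \big\langle\, a_i,\ s a_i s^{-1} \ \big|\ s \in S,\ 1 \le i \le m \,\big\rangle .
\]
Four short points then need checking: (i) $K$ is finitely generated by construction; (ii) since $N \lhd G$ and $A \le N$, all generators of $K$ lie in $N$, so $K \le N$ and hence $K$ is a finitely generated \emph{abelian} group; (iii) $K$ contains $A \cong \zz^m$, so $K$ has growth at least polynomial of degree $m \ge d$; (iv) for each $s \in S$ one has $sAs^{-1} \le K \cap sKs^{-1}$, and $sAs^{-1}$ is infinite, so $sKs^{-1} \cap K$ is infinite — thus $K$ is q-normal in $G$ with respect to $S$, in particular wq-normal via the two-term chain $K < G$. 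Being an infinite finitely generated abelian group, $K$ has infinite FC-centre (namely $Z^{FC}(K) = K$), whence $\srl{H}^1(K,\lmd_{\ell^p K}) = 0$ by Remark \ref{rreflpcoh}.1.

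At this stage all hypotheses of Theorem \ref{tqnorlpcoh-t} are in place ($1 < p < d$; $G$ and $K$ finitely generated; $K$ wq-normal in $G$ of growth at least degree-$d$ polynomial; $\srl{H}^1(K,\lmd_{\ell^p K}) = 0$), and that theorem yields $\srl{H}^1(G,\lmd_{\ell^p G}) = 0$. I do not anticipate a genuine obstacle: the real content sits in Theorem \ref{tqnorlpcoh-t} and in the known fact (Remark \ref{rreflpcoh}.1) that groups with infinite FC-centre have vanishing reduced $\ell^p$-cohomology. The only points deserving a word of care are that $G^{(k-1)}$ is abelian — which is immediate from the derived series and is exactly what lets $K$ inherit the vanishing of its $\ell^p$-cohomology — and that the subgroup generated by finitely many conjugates of $\zz^m$ is again finitely generated and still q-normal, both of which are routine.
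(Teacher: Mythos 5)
Your proposal is correct and follows essentially the same route as the paper: take a $\zz^{d}$ (or $\zz^{\lceil d\rceil}$) inside the abelian characteristic subgroup $G^{(k-1)}$, pass to the subgroup generated by its conjugates under a finite generating set of $G$ to obtain a finitely generated abelian q-normal subgroup of growth at least degree $d$ with vanishing reduced $\ell^p$-cohomology, and apply Theorem \ref{tqnorlpcoh-t}. The only cosmetic difference is that you justify the vanishing for $K$ via the infinite FC-centre (Remark \ref{rreflpcoh}.1) where the paper simply invokes that $K$ is abelian; both are valid.
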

[The case $p=1$ is slightly singular and need not be addressed here (see Remark \ref{rreflpcoh}).]
\begin{proof}
The characteristic subgroup $G^{(k-1)}$ is Abelian. 
Take a subgroup $K'< G^{(k-1)}$ isomorphic to $\zz^d$. 
Let $S$ be a generating set for $G$, $S'$ a finite generating set for $K'$ and $K$ be the group generated by $\cup_{g \in S} g S'g^{-1}$.
$K$ now satisfies all the hypothesis of Theorem \ref{tqnorlpcoh-t} ($K<G^{(k-1)}$ being Abelian, its reduced cohomology vanishes; $K$ grows faster than $K'$; $K$ is a finitely generated q-normal subgroup of $G$) and the conclusion follows. 
 \end{proof}
It could, of course, happen that $G^{(k-1)}$ is locally finite, in which case one could try to apply Theorem \ref{tqnorlpcoh-t} on $G^{(k-2)}$ or any subgroup containing $G^{(k-1)}$. If $G^{(k-1)}$ is finitely generated, then the finitely generated subgroups of $G^{(k-2)}$ are prime candidates (at worse they are nilpotent and at best polycyclic; see Remark \ref{rreflpcoh}.1-2)

The hypothesis of finite generation from Theorems \ref{tqnorlpcoh-t} and \ref{tqnorcen-t} may be dropped if one requires more normality in the ascending sequence.
\begin{cor}\label{tasclp-c}
Let $\Gm$ be such that
\begin{enumerate}\renewcommand{\itemsep}{-1ex} \renewcommand{\labelenumi}{\bf \arabic{enumi}.}
\item there is a subgroup $H$ which is wq-normal with respect to the sequence $\{H_\alpha\}_{\alpha \leq \beta}$,
\item there is a subgroup $K$ of $H$ which is finitely generated and has growth at least polynomial of degree $d>p$,
\item the inclusion $H_\alpha < H_{\alpha+1}$ is q-normal when $H_\alpha$ is finitely generated and normal otherwise,
\item either $\srl{H}^1(H, \lmd_{\ell^p H} ) =\{0\}$ or $Z^{FC}_{H_1}(H)$ is infinite.
\end{enumerate}
Then $\srl{H}^1(\Gm,\lmd_{\ell^p\Gm}) = \{0\}$.
\end{cor}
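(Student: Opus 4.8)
The plan is to run a transfinite induction along the ascending wq-normal sequence $\{H_\alpha\}_{\alpha\le\beta}$ (with $H_0=H$ and $H_\beta=\Gm$), establishing at each stage the statement $\srl{H}^1(H_\alpha,\lmd_{\ell^p H_\alpha})=0$; the stage $\alpha=\beta$ is then the conclusion. The base case $\alpha=0$ is the hypothesis $\srl{H}^1(H,\lmd_{\ell^p H})=0$. Two facts will be used repeatedly without comment: first, every finitely generated subgroup of $\Gm$ that contains $K$ has growth at least polynomial of degree $d$, since its balls (for a generating set extending one of $K$) contain the corresponding balls of $K$ and growth type is generating-set independent; second, $\Gm$, hence every subgroup occurring, is countable, so Proposition~\ref{tcorMV-c} is available throughout.

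The successor step, passing from $\alpha$ to $\alpha+1$, is where the work lies. By the first equivalence in Proposition~\ref{tcorMV-c} it suffices to show $\srl{H}^1(L,\lmd_{\ell^p L})=0$ for every finitely generated $L<H_{\alpha+1}$, and the strategy is to trap $L$ inside a finitely generated $G'$ with $L<G'<H_{\alpha+1}$ containing a finitely generated subgroup $K'$ with $K<K'$, $K'$ q-normal (hence wq-normal) in $G'$, and $\srl{H}^1(K',\lmd_{\ell^p K'})=0$; Theorem~\ref{tqnorlpcoh-t} then gives $\srl{H}^1(G',\lmd_{\ell^p G'})=0$, whence $\srl{H}^1(L,\lmd_{\ell^p L})=0$ again by Proposition~\ref{tcorMV-c}. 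If $H_\alpha$ is finitely generated, the inclusion $H_\alpha<H_{\alpha+1}$ is q-normal by hypothesis: writing the finitely many generators of $L$ as words in $H_\alpha$ and finitely many q-normalising generators $a_1,\dots,a_k$ of $H_{\alpha+1}$, one takes $K'=H_\alpha$ and $G'=\langle H_\alpha,a_1,\dots,a_k\rangle$, so that $K'$ is q-normal in $G'$ by construction and $\srl{H}^1(K',\lmd)=0$ is the inductive hypothesis. If $H_\alpha$ is not finitely generated, the inclusion is normal by hypothesis: I would write $H_\alpha=\bigcup_j H_\alpha^{(j)}$ as an increasing union of finitely generated subgroups each containing $K$ (so $\srl{H}^1(H_\alpha^{(j)},\lmd)=0$ for all $j$, by the inductive hypothesis and Proposition~\ref{tcorMV-c}); given $L=\langle\ell_1,\dots,\ell_k\rangle<H_{\alpha+1}$ and a fixed index $j$, normality gives $\ell_i^{\pm1}H_\alpha^{(j)}\ell_i^{\mp1}<H_\alpha$, each of these being finitely generated, hence contained in $H_\alpha^{(N)}$ for $N$ large enough to serve all $i$ and both signs; then $K'=H_\alpha^{(N)}$ is q-normal in $G'=\langle H_\alpha^{(N)},\ell_1,\dots,\ell_k\rangle$ because $\ell_i H_\alpha^{(N)}\ell_i^{-1}\cap H_\alpha^{(N)}\supseteq\ell_i H_\alpha^{(j)}\ell_i^{-1}$, which is infinite, and $L<G'$.

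The limit case $\gamma$ is then easy: by the inductive hypothesis every $H_\delta$ with $\delta<\gamma$ has vanishing reduced $\ell^p$-cohomology, and every finitely generated subgroup of $H'_\gamma:=\bigcup_{\delta<\gamma}H_\delta$ lies in some $H_\delta$, so Proposition~\ref{tcorMV-c} yields $\srl{H}^1(H'_\gamma,\lmd_{\ell^p H'_\gamma})=0$. If $H'_\gamma=H_\gamma$ we are done; otherwise the single remaining inclusion $H'_\gamma<H_\gamma$ is treated exactly as a successor step ($H'_\gamma$ finitely generated makes it a successor already covered, and $H'_\gamma$ not finitely generated is the normal case above, with $H'_\gamma$ exhausted by its finitely generated subgroups containing $K$).

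The main obstacle, and the reason the hypothesis distinguishes q-normal from normal steps according to whether $H_\alpha$ is finitely generated, is exactly the transfer of q-normality to finitely generated intermediate subgroups: as noted after Lemma~\ref{twqnor-amal}, q-normality is not inherited by intermediate subgroups in general, so one cannot simply intersect a q-normal inclusion with a finitely generated $G'$. The device above gets around this by enlarging the finitely generated approximation $H_\alpha^{(j)}$ of $H_\alpha$ until it absorbs the conjugates $\ell_i^{\pm1}H_\alpha^{(j)}\ell_i^{\mp1}$, which is possible only because normality pushes these conjugates back inside $H_\alpha$; in the finitely generated case no such device is needed, since $H_\alpha$ itself already plays the role of $K'$. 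Everything else is routine bookkeeping with generating sets, the growth bound being automatically inherited from $K$.
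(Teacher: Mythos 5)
Your proof is correct and follows essentially the same strategy as the paper: reduce to finitely generated subgroups via Proposition~\ref{tcorMV-c}, manufacture a finitely generated q-normal subgroup by conjugating a finitely generated piece of $H_\alpha$ by the finitely many relevant generators (using normality to keep the conjugates inside $H_\alpha$ when it is not finitely generated), apply Theorem~\ref{tqnorlpcoh-t}, and run a transfinite induction. Your version is somewhat more careful than the paper's (which conjugates $K$ itself and tacitly assumes the ambient finitely generated subgroup contains $K$, and compresses the limit-ordinal step into one phrase), but the underlying mechanism is identical.
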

\begin{proof}
If $H_0:=H$ is finitely generated, either Theorem \ref{tqnorlpcoh-t} or Theorem \ref{tqnorcen-t} apply directly. 
So, by hypothesis, it may be assumed that $H_0 \lhd H_1$.

Let $S$ be a finite generating set for $K$.
For any finitely generated subgroup $K_1 < H_1$ with generating set $S_1$, let $K'$ be the group generated by $\cup_{g \in S_1} g S g^{-1}$.
Since $H_0 \lhd H_1$, $K' < H_0$.
If $\srl{H}^1(H, \lmd_{\ell^p H} ) =\{0\}$, then, by Corollary \ref{tcorMV-c}, any finitely generated $K' < H_0$ will satisfy $\srl{H}^1(K', \lmd_{\ell^p K'} ) =\{0\}$.
Hence Theorem \ref{tqnorlpcoh-t} can be applied to $K'$ (it is q-normal in $K_1$ and grows faster than $K$).
If $Z^{FC}_{H_1}(H)$ is infinite, then so is $Z^{FC}_{H_1}(K')$ (as $Z^{FC}_{H_1}(K') \supset Z^{FC}_{H_1}(H)$).
Apply Theorem \ref{tqnorcen-t} to $K'$ to conclude that $\srl{H^1}(K',\lmd_{\ell^p}) =0$.

This shows any finitely generated $K_1 < H_1$ satisfies $\srl{H}^1(K_1, \lmd_{\ell^p K_1} ) =\{0\}$. 
The conclusion passes to $H_1$ by Corollary \ref{tcorMV-c}.
Transfinite induction (using Corollary \ref{tcorMV-c} again at the inaccessible ordinals) gives the conclusion.
\end{proof}
The methods of the previous corollary could be use to cover many other groups, but these do not seem to fit in any nicely described class. 
Many hyperabelian groups are covered by this corollary.
For example, there are finitely generated hyperabelian non-solvable groups (see Hall \cite[\P{}2 of p.539 in \S{}1.7]{Hall}) to which Corollary \ref{tasclp-c} applies.

% \cite[Theorem 4.1]{PT}. Given a subgroup $H<G$, $\dot{H}$ denotes $H \setminus \{e_G\}$.
% \begin{teo}
% Assume $G$ is torsion-free.
% There exists a family of subgroups $G_i$ such that 
% \begin{enumerate}\renewcommand{\itemsep}{-1ex} \renewcommand{\labelenumi}{\bf \arabic{enumi}.}
% \item $\dot{G} = \sqcup_i \dot{G}_i$;
% \item the $G_i$ are malnormal in $G$;
% \item 
% \end{enumerate}
% 
% \end{teo}
% 
% \begin{proof}
% Note that, since $G$ is torsion-free, if $b$ is a cocycle and $b(h)=0$ then $\pgen{h} < \ker b$ is infinite (and by Lemma \ref{tlempt-l}, malnormal).
% Define an equivalence relation on $\dot{G}$: $g \sim h$ by ``$\forall b \in Z^1(G,\pi)$, $b(g) =0 \iff b(h)=0$''.
% The claim is that the $G_i$ are the equivalence classes of this equivalence relation (on $\dot{G}$).
% 
% To show this claim, one can first show that a $\Rightarrow$ is equivalent to a $\iff$ in the definition of the equivalence relation.
% Indeed, assume $\forall b'$, $b'(g) =0 \implies b'(h)=0$ but there is a $b$ so that $b(h) =0$ and $b(g) \neq 0$.
% Realise $b$ as a virtual cocycle $f \in \D^pG$ with $f(e) =0$.
% Then $f$ is $g$-invariant, \ie $f(g^{-1}x) = f(x)$.
% Consider some finitely supported function $\tilfe{f}$ so that $\tilfe{f}(e) =0$ and $\tilde{f}$
% \end{proof}

%----------------------------------
%----------------------------------
\section{Questions }\label{sques}
%----------------------------------
%----------------------------------

Here is a conjecture motivated by Osin \cite[Problem 3.3]{Osin-ques} (does $\srl{H}^1(\Gm,\lmd_{\ell^2\Gm}) \neq 0$ and finite presentation implies acylindrically hyperbolic) and Gromov \cite[\S{}8.$A_1$.($A_2$), p.226]{Gro} (does $\Gm$ amenable implies $\srl{H}^1(\Gm,\lmd_{\ell^p\Gm}) = 0$).
\begin{conj}
Assume $\Gm$ is a torsion-free finitely presented group. If, for some $p \in ]1,\infty[$, $\srl{H}^1(\Gm,\lmd_{\ell^p\Gm}) \neq 0$ then $\Gm$ contains a free subgroup.
\end{conj}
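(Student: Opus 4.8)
The plan is to combine the trichotomy of Corollary~\ref{tcortrico-c} with the structure theory of acylindrically hyperbolic groups: for torsion-free $\Gamma$ the ``almost-malnormal strict subgroup'' alternative of the trichotomy produces \emph{proper malnormal} subgroups, and a group carrying a proper malnormal subgroup of infinite index together with enough ``largeness'' is acylindrically hyperbolic, hence contains a copy of $F_2$ by Dahmani--Guirardel--Osin. Throughout one should assume $1<p<\infty$: for $p=1$ non-vanishing is merely the two-ended condition, which $\zz$ satisfies, so the statement fails in that range.

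\textbf{Step 1 (non-amenability and first consequences).} If $1<p\le2$ and $\Gamma$ is amenable then $\srl{H}^1(\Gamma,\lmd_{\ell^p\Gamma})=0$ by Remark~\ref{rreflpcoh}, contradicting the hypothesis; for $p>2$ the vanishing in the amenable case is Gromov's open problem, which one would have to assume. Assume henceforth $\Gamma$ non-amenable; then $\Gamma$ is not virtually nilpotent, so by Gromov's polynomial growth theorem it has super-polynomial growth. Applying the trichotomy of Corollary~\ref{tcortrico-c} to $K=\Gamma$ itself, the ``strict subgroup'' alternative is vacuous and the ``polynomial growth'' one is excluded, so $Z^{FC}(\Gamma)$ is finite, hence trivial by torsion-freeness. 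Finally, the contrapositive of Theorem~\ref{tqnorlpcoh-t} shows that any infinite finitely generated subgroup $K<\Gamma$ of growth degree $>p$ which is wq-normal in $\Gamma$ must itself have $\srl{H}^1(K,\lmd_{\ell^pK})\neq0$.

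\textbf{Step 2 (producing a proper malnormal subgroup and descending).} Pick a finitely generated $K_0<\Gamma$ that is not virtually nilpotent; this exists since $\Gamma$ is finitely generated and not virtually nilpotent, and $K_0$ then has super-polynomial growth, so it is ``large'' in the sense of the trichotomy. Let $M_0$ be the almost-malnormal hull of $K_0$ (Lemma~\ref{twqnor-amal}). If $M_0\lneq\Gamma$, then $M:=M_0$ is a proper malnormal subgroup (torsion-freeness). If $M_0=\Gamma$, then $K_0$ is wq-normal in $\Gamma$, so by Step~1 $\srl{H}^1(K_0,\lmd_{\ell^pK_0})\neq0$, and $K_0$ inherits the standing hypotheses (torsion-free, finitely generated, and --- for $p\le2$ --- non-amenable), so one replaces $\Gamma$ by $\Gamma_1:=K_0\lneq\Gamma$ and iterates. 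Since $F_2<\Gamma_n$ gives $F_2<\Gamma$, it suffices that this descent produce, at some stage $\Gamma_n$, a proper infinite-index malnormal subgroup $M\lneq\Gamma_n$. Given such $M$, coning off the $M$-cosets in a Cayley graph of $\Gamma_n$ yields (by an Osin-type argument) an acylindrical action of $\Gamma_n$ on a hyperbolic graph, and this action is non-elementary because $\Gamma_n$ is non-amenable with trivial FC-centre and non-vanishing reduced $\ell^p$-cohomology; hence $\Gamma_n$, and therefore $\Gamma$, contains $F_2$.

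\textbf{Main obstacle.} The hard parts are exactly the two places where the argument above is not self-contained. First, the amenable case for $p>2$ is Gromov's long-standing question and cannot be circumvented by these methods. Second --- and serious even granting non-amenability --- one needs (i) that a proper infinite-index malnormal subgroup actually forces acylindrical hyperbolicity, which is not known in this generality since such a subgroup need not be hyperbolically embedded, and (ii) that the descent of Step~2 terminates, i.e.\ that $\Gamma$ is not an exotic group all of whose finitely generated non-virtually-nilpotent subgroups have almost-malnormal hull equal to the whole group. Ruling (ii) out is precisely where finite presentation of $\Gamma$ (as opposed to mere finite generation, where Tarski-monster-type behaviour obstructs everything) must enter --- presumably through accessibility, actions on $\mathrm{CAT}(0)$ cube complexes, or a complexity/dimension bound --- and making this work is the unresolved core of the conjecture.
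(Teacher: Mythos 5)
This statement is labelled a \emph{conjecture} in the paper, and the paper offers no proof of it --- it is stated as an open problem motivated by questions of Osin and Gromov. Your proposal, by your own admission in the ``Main obstacle'' paragraph, is likewise not a proof: it is a strategy sketch whose essential steps remain unjustified. So there is nothing in the paper to compare your argument against, and the argument itself does not close.

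To be concrete about where the sketch fails. (i) For $p>2$ you have no way to exclude the amenable case; the vanishing of reduced $\ell^p$-cohomology for amenable groups in that range is exactly Gromov's question, so your Step 1 already assumes an open problem. (ii) The pivot of Step 2 --- that a proper infinite-index malnormal (or almost-malnormal) subgroup plus ``largeness'' forces acylindrical hyperbolicity --- is not a theorem: malnormality alone does not make a subgroup hyperbolically embedded, and the cone-off need not be hyperbolic or the action acylindrical without strong additional hypotheses (e.g.\ a bounded packing or quasiconvexity condition). The paper itself warns that the relationship between $\ell^p$-cohomology and acylindrical hyperbolicity is delicate in both directions (Osin's observation that some acylindrically hyperbolic groups have trivial $\ell^p$-cohomology for all $p$). (iii) The descent has no termination argument; nothing in Corollary \ref{tcortrico-c} or Lemma \ref{twqnor-amal} prevents every finitely generated non-virtually-nilpotent subgroup from having almost-malnormal hull equal to the whole group, and invoking finite presentation to rule this out is a hope, not an argument. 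The sketch is a reasonable survey of why the tools of \S{}\ref{slpcoh} are \emph{relevant} to the conjecture, but each of the three gaps is a genuine missing idea rather than a routine verification.
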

One could also strengthen the hypothesis to ``finite $K(\Gm,1)$''.
It would be nice to construct the free subgroup by using the ping-pong Lemma on some ideal completion (\eg the $p$-Royden boundary, see Corollary \ref{tcorroyd-c}).

\begin{ques}
If $G$ is a finitely generated solvable group, does $\srl{H}^1(G,\lmd_{\ell^pG}) = 0$ for any $1<p<\infty$?
\end{ques}
Already the metabelian (derived length 2) case is not clear. 
Some $2$-generator metabelian groups of these are known to have malnormal subgroups (see de la Harpe \& Weber \cite[\S{}3]{HW}), but the from the possible tools to conclude the vanishing there is always [at least] one which applies.
The case (locally nilpotent not finitely generated)-by-Abelian would probably suffice to answer the question. 
In fact, for such groups, the difficulty comes in when there is a uniform growth bound on the locally nilpotent group (\eg it is locally finite or $\oplus_{i=1}^n \zz[\tfrac{1}{p_i}]$ where $p_i \in \nn$), otherwise Corollary \ref{tasclp-c} can be applied. 
% permutational wreath product with finite lamps

% The following questions were pointed out to the author by M.~Bourdon (motivated by Corollary \ref{tcorptlp-c} and results from \cite{Bourdon-hyp} and \cite{BK}). 
% Indeed, it seems that the only known examples of groups $G$ for which there is a subgroup $K$ with $H^1(G,\lmd_{\ell^pG}) \neq 0$ and $H^1(K,\lmd_{\ell^pK}) =0$ can be written as amalgamated products (see \cite{Bourdon-hyp}).
% \begin{ques}\label{qbourdon}
% Assume $G$ is a group with $H^1(G,\lmd_{\ell^pG}) \neq 0$ and $H^1(K,\lmd_{\ell^pK}) =0$ for some infinite subgroup $K$.
% \begin{enumerate}\renewcommand{\itemsep}{-1ex} \renewcommand{\labelenumi}{\bf \arabic{enumi}.}
%  \item Does $G$ act on a tree?
%  \item Can $G$ have property (T)?
% \end{enumerate}
% \end{ques}
%  XXX
The sharpness of Corollary \ref{tcorptlp-c}, Theorem \ref{tqnorlpcoh-t} and Theorem \ref{tqnorcen-t} are not so easy to check.
M. Bourdon told the author that amalgamated products (see \cite{Bourdon-hyp}) give a first family of examples.
Another prevailing source of malnormal groups is hyperbolicity (see \cite[\S{}3, in particular Example 9]{HW} for more examples). 
In fact, $G$ is hyperbolic relative to the family $\{H_\lmd \}_{\lmd \in \Lmd}$, then the $H_{\lmd}$ are malnormal in $G$ (see Osin's book \cite[Corollary 2.37]{Osin-book}; it is a consequence of the ``fine'' property in the sense of Bowditch \cite[Proposition 2.1 and \S{}4]{Bow}). 
M.~Bourdon also indicated to the author that combining [a careful reading of the] construction of Gerasimov \cite{Gera} with a result of Puls \cite[Theorem 1.3]{Puls-floyd} shows that relatively hyperbolic groups have a non-trivial cohomology for some $p$.
Currently, there are no written reference relating this $p$ to the conformal dimension of the Bowditch boundary \cite{Bow}.
% The author was told the following question by D.~Osin:
% \begin{ques}\label{qosin}
% If $p_B$ is the conformal dimension of the Bowditch boundary and $p_c = \inf\{ p \mid H^1(G,\lmd_{\ell^p}) \neq 0\}$, does $p \leq p_B$?
% \end{ques}
% Such groups are prime candidates for negative answers to Question \ref{qbourdon} (the answer cannot be positive for both sub-questions).
% Some relatively hyperbolic groups have property (T), but the subgroup $K$ with trivial unreduced $\ell^p$-cohomology to be found in these examples.
% XXX

D.~Osin pointed out to the author that some acylindrically hyperbolic groups have a trivial $\ell^p$-cohomology for all $p \in [1,\infty[$.
A result of Lohou\'e \cite{Loh} (see also \cite[Theorem 1.2 or Corollary 3.14]{Go}) shows that, for non-amenable groups, $H^1(G,\lmd_{\ell^pG}) \neq 0$ implies the existence of a harmonic function with gradient in $\ell^p$.
% As was used in the present paper, this also holds for non-nilpotent amenable groups, but there is a loss on the exponent $p$ (see \cite{Go-lpharm}).
Hence, a weaker condition than ``trivial [reduced] $\ell^p$-cohomology'' as $p \to \infty$ is to ask whether there is a harmonic function with gradient in $c_0$.
At the moment, it is unclear whether or not acylindrically hyperbolic groups always have a harmonic function with gradient in $c_0$.
Harmonic function with gradient in $c_0$ are produced from the harmonic cocycles of strongly mixing unitary representations (see \cite[\S{}2.5 and Corollary 2.6]{GJ} or Lemma \ref{tgradcond-l}).
\begin{ques}
Assume a finitely generated group $G$ has a harmonic function with gradient in $c_0$. Is there a strongly mixing linear isometric representation of $G$ on a strictly convex Banach space with non-trivial reduced cohomology?
\end{ques}
Note that the representation may not (in general) be unitary (there are hyperbolic groups with property (T), see \eg \.Zuk \cite[Theorem 3 and 4]{Zuk}).

For $p\in ]1,\infty[$, the triviality of the reduced $\ell^p$-cohomology in groups is monotone (if it non-trivial for $p$ then it is non-trivial for all $q>p$).
The infimal $p$ where the $\ell^p$-cohomology is non-trivial is sometimes denoted $p_c(G)$.
Corollary \ref{tcoefflpharm-c} seems to relate $p_c(G)$ to the quantity $p(G)$ introduced by Shalom in \cite[\S{}1.8 and \S{}4]{Shal2}.
Note that the conditions on the coefficients are not the same and are there differences in reduced/unreduced cohomology, nevertheless this raises the
question: when is there an inequality $p_c(G) \leq p(G)$?
Links between $\ell^p$-cohomology and $p(G)$ are also hinted at in Bourdon, Martin \& Valette \cite{BMV}.

Let $G$ be a torsion-free group, $\pi:G \to \mr{GL}(V)$ a representation with finite stabilisers and $b \in Z^1(G,\pi)$.
If $b(g) =0$ for some $g \in G$, then Lemma \ref{tlempt-l} implies that $b \equiv 0$ on the malnormal hull of $\pgen{g}$ (because $\pgen{g}$ is infinite and contained in $\ker b$).
This looks like a first step to extend a result from Peterson \& Thom \cite[Theorem 4.1]{PT} to other representation.
The crucial point that ``$\forall b \in Z^1(G,\pi), b(g) =0 \implies b(h) =0$'' implies ``$\forall b\in Z^1(G,\pi), b(g) =0 \iff b(h) =0$'' seems out of reach.
% \begin{ques}
% Let $G$ be torsion-free and $\pi$ a Banach representation with finite stabilisers.
% Given a subgroup $H<G$, let $\dot{H}$ denote $H \setminus \{e_G\}$.
% Under which hypothesis on $\pi$ does there exist a family of subgroups $G_i$ such that 
% \begin{itemize}
% \renewcommand{\itemsep}{-1ex}
% \item $\dot{G} = \sqcup_i \dot{G}_i$ (where $\sqcup$ emphasises that the union is disjoint);
% \item the $G_i$ are malnormal in $G$;
% \item $\srl{H}(G_i,\pi_{|G_i}) =0$.
% \end{itemize}
% \end{ques}

The following question is motivated by \cite[\S{}2.5 and Corollary 1.4]{GJ} (see Definition \ref{defcomp} for compression).
\begin{ques}
Assume $b \in Z^1(G,\pi)$ is a harmonic cocycle for a unitary representation $\pi$ and fix a generating set $S$ for $G$. 
Let $b_n$ be the number of elements in a ball of radius $n$ of $\cayl(G,S)$ and $s_n = b_n - b_{n-1}$ (for $n>0$ and $s_0 = 1$).
If $\rho_-(n)$ is the compression function of $b$, is it true that there is a $K>0$ so that 
\[
\rho_-(n) \leq K \sqrt{\sum_{i=0}^{n} \frac{b_i}{s_i}.}
\]
\end{ques}
The main motivation is the following. If true this would mean that
\[
\rho_-(t) \lesssim
\left\{\begin{array}{ll}
n^{(1-\nu)/2}	& \text{if } b_n \approx \mr{exp}(n^\nu) \\
n /\sqrt{\ln n}	& \text{if } b_n \approx n^{\ln n}
\end{array}\right.
\]
where $\rho_-$ is the compression function of a harmonic cocycle. 
Note that this estimate fails for an element of $\srl{B}^1(G,\pi)$. 
Nevertheless, it would be enough to settle \cite[Conjecture 1]{CTV} for discrete amenable groups. 
Actually, thanks to Naor \& Peres \cite[Theorem 1.1]{NP}, the only amenable groups for which the conjecture is open are those with a diffusive behaviour (the expected distance to the identity of a random walk at time $n$ is $\simeq \text{cst} 
\cdot \sqrt{n}$).

Virtual cocycles are very useful in some aspects and it would be nice to be able to use them for a wider range of representations. 
The following question seems like a natural place to start.
\begin{ques}
Assume $G \acts (X,\mu)$ is mildly mixing and let $\pi$ be the associated $\L^p$-representation. 
If $K<G$ and $f \in \D^p(K,\pi)$ is constant on $K$-orbits (\ie associated to the trivial cocycle), 
\begin{itemize}\renewcommand{\itemsep}{-1ex}
\item what are the choices (depending on $g \in G \setminus N_G(K)$) of the constants so that $\pi(g)f \in \D^p(K,\pi)$?
\item if there is a choice of the constants so that $f \in \D^p(G,\pi)$, does it imply that $K$ is not wq-normal in $G$?
\end{itemize}
\end{ques}
For the first question, note that if $g \in N_G(K)$ any choice would work. Any answer for a different mixing condition would be of interest too.

It seems difficult to pass the arguments of \S{}\ref{suntwist} to reduced cohomology. Here is a list of possible improvements.
\begin{ques}\label{qredcohqnorm}
~\\[-1.5em]
\begin{enumerate}\renewcommand{\itemsep}{-1ex} \renewcommand{\labelenumi}{\bf \arabic{enumi}.}
\item If $\pi$ is mildly mixing and there is an infinite finitely generated $H<G$ which is q-normal and $\srl{H}^1(H,\pi_{|H}) =0$,
does $\srl{H}^1(G,\pi) =0$? 

\item If $\pi$ is mildly mixing and there is an infinite $H \lhd G$ with $\srl{H}^1(H,\pi_{|H}) =0$,
does $\srl{H}^1(G,\pi) =0$? 

\item If $\pi$ is mildly mixing and there is a finitely generated subgroup $H<G$ with $Z^{FC}_G(H)$ infinite, 
is $H$ contained in the kernel of the harmonic cocycles?

\item If $\pi$ is weakly mixing and there is an infinite $H \lhd G$ with $G/H$ cyclic and $\srl{H}^1(H,\pi_{|H}) =0$,
does $\srl{H}^1(G,\pi) =0$? 
\end{enumerate}
\end{ques}
Question \ref{qredcohqnorm}.4 has been answered in the negative by Brieussel \& Zheng \cite[Remark 4.6]{BZ}.
In Question \ref{qredcohqnorm}.1-3, it would be reasonable to add hypothesis such as $G = \pgen{H,g_0}$ or $H^1(G,\pi) = \srl{H}^1(G,\pi)$ or strongly mixing.
Note also that mildly mixing is the close to being the most optimistic mixing hypothesis (one could also put the hypothesis on the subgroups, compare with Lemma \ref{tlempopastyle-l}). 
Indeed, $\zz \wr \zz$ has $\zz^d$ (for any $d>1$) as q-normal subgroup. 
All ergodic representations of $\zz^d$ have trivial reduced cohomology (see \eg Theorem \ref{tergliou-t}). 
However, Shalom \cite[Theorem 15 or Theorem 5.4.1]{Shal} showed that $\zz \wr \zz$ does not have property $H_{FD}$, \ie there is a weakly mixing representation of $\zz \wr \zz$ which has non trivial reduced cohomology. 
These representations are however not mildly mixing nor do they have finite stabilisers (they factor through an infinite subgroup, see \cite[Proof of Theorem 5.4.1]{Shal}).

Let us briefly discuss this construction. 
Let $G = \zz \wr H := ( \oplus_{h \in H} \zz ) \rtimes H$ be a wreath product (with $H$ the ``base'' group and $\zz$ the ``lamp state'' group) and write its elements as $(l,h)$ where $l$ is a finitely supported function from $H$ to $\zz$. Let $\pi$ be a representation of $H$ and look at the representation $\pi: G \to \Uni(\Hi)$ be defined by $\pi(l,h) = \pi(h)$. 
For any vector $\xi \in \Hi \setminus \{0\}$, $b_\xi(l,h) = \sum_{g \in H} l(g) \pi(g) \xi$ defines a cocycle. 
It is easy to check that this cocycle is harmonic: for $S_H$ some generating set of $H$,
\[
b_\xi(-\delta_e,e) + b_\xi(\delta_e,e) + \sum_{s \in S_H} b_\xi(0,s) = -\xi+\xi +0 =0.
\]
Hence $b_\xi$ is non-trivial in cohomology (in fact, $b_\xi - b_\eta = b_{\xi-\eta}$ so each $\xi \in \Hi \setminus \{0\}$ gives a different class). 

The interesting point (for the current subject matter) is that $\ker b = H$ is a malnormal subgroup of $G$ (see de la Harpe \& Weber \cite[Proposition 1 and subsequent \P{}]{HW}).
The kernel of $\pi$ (in $G$) contains the infinite normal subgroup $N = \oplus_{h \in H} \zz$, so it does not have finite stabilisers (and is not mildly mixing).
In fact, $\pi_{|N}$ acts by the trivial representation and, hence,  $\srl{H}^1(N', \pi_{|N'}) \neq 0$ for any subgroup $N' < N$.
So this example is not contradictory with positive answers to Question \ref{qredcohqnorm}.
% $\pi$ is nevertheless weakly mixing if $\pi_{|H}$ is weakly mixing.

\textsc{\newline Antoine Gournay }
% \newline Institut f\"ur Geometrie, Fachrichtung Mathematik 
\newline TU Dresden 
\newline 01062 Dresden, Germany

\end{document}